\documentclass[12pt,reqno]{amsart}

\usepackage[T1]{fontenc}
\usepackage{stix2}

\usepackage{mathtools}
\usepackage{mathrsfs}
\usepackage{bbm}

\usepackage[round]{natbib}

\usepackage{amsaddr}

\usepackage[a4paper,left=1.1in,right=1.1in,headsep=2em]{geometry}

\usepackage[colorlinks,citecolor={blue!50!black},linkcolor=blue,urlcolor=blue]{hyperref}

\usepackage{enumitem}
\usepackage{graphicx}
\usepackage{verbatim}
\usepackage{marginnote}

\usepackage[capitalise,nosort]{cleveref}
\crefname{assumption}{Assumption}{Assumptions}
\crefname{figure}{Figure}{Figures}

\usepackage{tikz}
\setlist[enumerate]{itemsep=2pt,topsep=3pt}
\setlist[itemize]{itemsep=2pt,topsep=3pt}
\setlist[enumerate,1]{label={\upshape (\roman*)}}

\numberwithin{equation}{section}
\newtheorem{theorem}{Theorem}[section]
\newtheorem{lemma}{Lemma}[section]
\newtheorem{proposition}{Proposition}[section]
\newtheorem{corollary}{Corollary}[section]
\theoremstyle{definition}
\newtheorem{assumption}{Assumption}[section]

\newtheorem{remark}{Remark}[section]

\newtheorem{example}{Example}[section]

\newcommand{\RR}{\mathbb{R}}
\newcommand{\NN}{\mathbb{N}}
\newcommand{\ZZ}{\mathbb{Z}}

\newcommand{\R}{\mathbb{R}}

\newcommand{\Z}{\mathbb{Z}}
\newcommand{\diff}{\mathop{}\!\mathrm{d}}
\newcommand{\PP}{\mathbb P}
\newcommand{\EE}{\mathbb E}
\newcommand{\TT}{\mathbb T}

\newcommand{\fF}{\mathcal{F}}

\newcommand{\bB}{\mathscr{B}}

\newcommand{\pP}{\mathscr{P}}

\newcommand{\Xsf}{\mathsf{X}}

\newcommand{\del}{\delta}
\newcommand{\ep}{\epsilon}
\newcommand{\lam}{\lambda}

\newcommand{\precsd}{\preceq_{\textrm{sd}}}

\newcommand{\navy}[1]{\textit{\textcolor{blue!40!black}{#1}}}

\begin{document}

\title[Stationary Distributions in Monotone Markov Models]{Stationary Distributions in Monotone Markov Models: Theory and Applications}

\author{Takashi Kamihigashi}
\address{Center for Computational Social Science, Kobe University}
\email{tkamihig@rieb.kobe-u.ac.jp}

\author{John Stachurski}
\address{National Graduate Institute for Policy Studies}
\email{john.stachurski@anu.edu.au}

\begin{abstract}
Many economic models feature monotone Markov dynamics on state spaces that may
be noncompact. Establishing existence, uniqueness, and stability of stationary
distributions in such settings has required a patchwork of sufficient
conditions, each tailored to specific applications. We provide a single
necessary and sufficient condition: a monotone Markov process 
has a globally stable stationary distribution if and only
if it is asymptotically contractive and has a tight trajectory. This
characterization covers both compact and noncompact state spaces, discrete and
continuous time, and extends to nonlinear Markov operators that depend on
aggregate state.  We demonstrate the result through
applications to wage dynamics, Bayesian learning with belief
shocks, and income processes that generate Pareto tails.
\end{abstract}

\keywords{fixed points, semigroups, contraction, global stability}

\maketitle

\setcounter{page}{1}

\section{Introduction}\label{sec:intro}

Many models in economics and finance have monotonicity properties with respect
to state variables.  A general methodology for studying dynamics of such models
in a Markov setting was provided in the frequently cited paper of
\cite{hopenhayn1992stochastic}, which in turn extended earlier results by
\cite{stokey1989recursive}, and \cite{razin}.  The monotone mixing condition
used in that paper has become a standard approach to establishing stability
properties over a large range of applications, including international trade,
human capital, business cycles, inequality, intergenerational mobility, and
labor markets, using models ranging from overlapping generations and competitive
search to mean field games (see, e.g., \cite{chatterjee2010stochastic},
\cite{hidalgo2009endogenous}, \cite{samaniego2008technical},
\cite{marcet2007incomplete}, \cite{antunes2007startup},
\cite{morand2007stationary}, \cite{le2022managing}, \cite{light2022mean},
\cite{balbus2025markov}, and \cite{kam2025inflation}).

The conditions in \cite{hopenhayn1992stochastic} are not universally applicable,
however, since they require a compact state space with least and greatest
elements.  This setup compromises analysis of important features of
economic data, such as heavy tails in cross-sectional distributions.
In addition, many time series models naturally generate unbounded states.
The need to extend \cite{hopenhayn1992stochastic} has motivated
a substantial literature. Subsequent
research has weakened their mixing conditions and replaced compactness of the state
space with ``tightness'' type conditions from the modern literature on Markov
process theory, as well as treating partially monotone models (see, e.g.,
\cite{ks12}, \cite{kamihigashi2014stochastic}, \cite{kamihigashi2016seeking},
\cite{foss2018stochastic}, \cite{kamihigashi2019unified},
\cite{foss2024compressibility}, \cite{light2024monotone}, and \cite{light2026invariant}).

In this paper we bring this literature full circle by providing simple necessary
and sufficient conditions for global stability---that is, existence, uniqueness,
and stability of stationary distributions---in monotone Markov models defined on
either compact or noncompact state space.  These
conditions are built on top of a new fixed point theorem established in the
paper. The fixed point theorem shows that, in any complete preordered metric
space where the metric satisfies a diagonal property, an order-preserving (i.e.,
monotone) operator has a unique fixed point if and only if it is asymptotically
contractive and generates at least one order-bounded trajectory.

In a Markov setting, we show that asymptotic contractivity holds under weak
mixing conditions that can be understood as generalizations of the the
well-known monotone mixing condition in \cite{hopenhayn1992stochastic}. We also
show that, when working with the stochastic dominance partial order, existence
of an order-bounded trajectory is equivalent to existence of a tight
trajectory.  This is a weak version of boundedness in probability, which is a
standard stability-related condition in the Markov process literature (see,
e.g., \cite{meyn2009markov, hahn2024central, ma2025optimal}).  Existence of a tight trajectory is also is also
considerably weaker than assuming, as in \cite{hopenhayn1992stochastic}, that
the state space itself is compact.

In addition, we prove all of these results in an abstract semigroup setting that
\begin{enumerate}
    \item includes both continuous and discrete time, and
    \item allows for nonlinear Markov operators as well as linear ones.
\end{enumerate}
Point (i) means that, in the process of establishing our necessary and
sufficient conditions, we also extend \cite{hopenhayn1992stochastic} and much of
the later surrounding literature to handle continuous time models. Point (ii)
means that our results can be applied to nonlinear Markov distribution dynamics,
where individual Markov updates depend on a distribution and the distribution is
determined by the individual Markov laws (see, e.g., \cite{light2026invariant}).


We demonstrate the scope of our results through several economic applications.
In the compact state space setting, we revisit the monotone mixing condition of
\cite{hopenhayn1992stochastic}, providing sharper results including exponential
convergence rates and ergodicity, and extending the framework to continuous
time. We illustrate with a continuous-time job ladder model of wage dynamics.
For noncompact state spaces, we develop applications to piecewise deterministic
Markov processes (PDMPs), a class of continuous-time models that combine smooth
deterministic dynamics with random jumps. PDMPs arise naturally in models of
wealth accumulation or firm dynamics, and are well-suited to capturing
the role of jumps and discrete shocks in continuous time.  

Our applications include a continuous-time job ladder model of wage dynamics,
extending the framework of \cite{burdett1998wage} and
\cite{moscarini2013stochastic} by allowing state-dependent wage offers; a
model of Bayesian learning subject to belief shocks, as arise in models of
countercyclical uncertainty and forecast bias
\citep{orlik2014understanding, cogley2008drifting}; and two models of income
dynamics---a pure jump process and a PDMP with deterministic drift---both of
which generate Pareto tails in the stationary distribution, complementing
recent work on stationary distributions in continuous-time
heterogeneous-agent models \citep{gabaix2016dynamics} and on Pareto
exponents in multiplicative economies \citep{beare2022determination}.

On a theoretical level, the closest papers to ours are
\cite{kamihigashi2014stochastic} and \cite{foss2024compressibility}, who prove
that global stability holds under a combination of (a) order-theoretic mixing
conditions and (b) side conditions that imply existence of a stationary
distribution. Under matching assumptions on the underlying state space, both of
these results are special cases of our abstract stability result in \cref{t:bk},
which shows that asymptotic contractivity and existence of an order-bounded
trajectory are enough for global stability.  (The results in
\cite{kamihigashi2014stochastic} and \cite{foss2024compressibility} both imply
asymptotic contractivity and order boundedness of at least one trajectory.) At
the same time, \cref{t:bk} goes further, admitting operators acting on arbitrary
preordered spaces, providing necessary conditions for global stability, and
handling continuous time as well as discrete time.  While
\cite{kamihigashi2014stochastic} is essentially superseded by the results in
this paper, \cite{foss2024compressibility} is complementary, adding valuable
sufficient conditions for order boundedness and asymptotic contractivity when
time is discrete.

The remainder of the paper is organized as follows. \cref{sec:prelim}
develops the abstract fixed point theory for order-preserving semigroups.
\cref{ss:md} specializes to probability spaces, introducing the
Bhattacharya metric and connecting tightness to order boundedness.
\cref{s:pmm} covers stochastic kernels and transition probability functions.
\cref{s:mmc} treats the monotone mixing condition on compact state spaces,
with an application to wage dynamics. \cref{ss:ksdis} extends to noncompact
state spaces in discrete time, with an application to Bayesian learning
with belief shocks. \cref{s:nssct} develops the continuous-time
counterpart, with an application to income dynamics. \cref{s:pdmp}
develops the PDMP framework and applies it to income dynamics with
deterministic drift. Proofs are collected in \cref{s:proofs}.

\section{Stability of Asymptotic Contractions}\label{sec:prelim}

We begin with background on preordered metric spaces and the diagonal
property.  We then introduce semigroups and the notions of asymptotic
contractivity and global stability, before stating our main results.

\subsection{Background}

Let $\Xsf$ be a set.  If $A$ is any subset of $\Xsf$, then $A^c$ refers to its
complement in $\Xsf$. We recall that a binary relation $\preceq$
on $\Xsf$ is called a \navy{preorder} if it is transitive and
reflexive. An antisymmetric preorder on $\Xsf$ is called a \navy{partial order}
on $\Xsf$. Given a preorder $\preceq$ on $\Xsf$, each subset $I$ of $\Xsf$
having the form
\begin{equation*}
    I \coloneq [a, b] \coloneq \{x \in \Xsf \, : \, a \preceq x \preceq b\}
    \qquad (a, b \in \Xsf,\; a \preceq b)
\end{equation*}
is called an \navy{order interval} in $\Xsf$.
A set $B \subset \Xsf$ is called \navy{order-bounded} if there exists an order
interval $I$ in $\Xsf$ with $B \subset I$.
A sequence $(x_i) \coloneq (x_i)_{i \in \NN} \subset \Xsf$ is
called \navy{increasing} if $x_i \preceq x_{i+1}$ for all $i$ and
\navy{decreasing} if $x_{i+1} \preceq x_i$ for all $i$.

A map $T$ from $\Xsf$ to itself is called a \navy{self-map} on $\Xsf$. The symbol $T^i$
indicates $i$ compositions of $T$ with itself. A self-map $T \colon \Xsf
\rightarrow \Xsf$ is called \navy{order-preserving} if $x \preceq y$ implies $T
x \preceq T y$.  A \navy{fixed point} of $T$ is an element $x^* \in \Xsf$ such
that $T x^* = x^*$.   

A \navy{preordered metric space} is a triple $(\Xsf, d, \preceq)$ where $(\Xsf,
d)$ is a metric space and $\preceq$ is a preorder on $\Xsf$. For such a space,
the preorder $\preceq$ is called \navy{closed} if $(x_i) \subset \Xsf$, $(y_i)
\subset \Xsf$, $x_i \preceq y_i$ for all $i$, $x_i \to x \in \Xsf$ and $y_i \to
y \in \Xsf$ implies $x \preceq y$.
We say that $d$ has the \navy{diagonal property}
with respect to $\preceq$ if, for all $a, b$ in $\Xsf$,
\begin{equation}\label{eq:diag}
    d(x, y) \leq d(a, b) 
    \quad \text{whenever} \quad
    x, y \in [a, b].
\end{equation}

\begin{example} \label{ex:becarre}
    Let $\Xsf$ be a Banach lattice with norm $\| \cdot \|$, partial order
    $\leq$, and absolute value function $| \cdot |$.
    Fix $a, b$ in $\Xsf$ and $x, y \in [a, b]$.  Since $-x \leq -a$
    and $y \leq b$, we have $y-x \leq b - a$.
    A similar argument gives $x-y \leq b - a$.  Hence $|x-y| \leq b - a = |b-a|$.
    Norms on Banach lattices preserve absolute order, so
    $\|x-y\| \leq \|a - b\|$. 
\end{example}

\begin{example}\label{ex:rfa}
    Let $A$ be any set, let $\Xsf$ be a set of real-valued functions on
    $A$, and let $\preceq$ be the pointwise order on $\Xsf$. If $d$ has the
    form $d(x,y) = e(|x-y|)$ for some non-decreasing function $e$ mapping into
    $\R_{+} \coloneq [0,\infty)$, then $d$ satisfies the diagonal
    property. Indeed, if we fix $x, y, a, b \in \Xsf$ with
    $x, y \in [a, b]$, then $a - b \preceq x-y \preceq b - a$ and hence $|x - y|
    \preceq |b - a|$.  Since $e$ is non-decreasing on $\Xsf$, this yields
        $d(x,y) = e(|x - y|) 
        \leq e(|b - a|) 
        = e(|a - b |) 
        = d(a,b)$.
\end{example}

\cref{ex:becarre,ex:rfa} show that the diagonal property also holds in most of the
classical spaces and any of their subsets (endowed with the same metric and
preorder).  Obvious special cases include
\begin{itemize}
    \item $\Xsf = \RR^n$, where $d$ is Euclidean distance and $\preceq$
        is the usual pointwise partial order,
    \item $\Xsf =$ all bounded, real-valued functions on a given set $M$,
        where $d$ is the supremum distance and $\preceq$ is the pointwise partial order, and
    \item $\Xsf$ is an $L_p$ space, $d$ is the $L_p$ distance, 
        and $\preceq$ is the almost everywhere pointwise partial order (see,
        e.g., \cite{zaanen2012introduction}).
\end{itemize}
\cref{fig:diag} helps illustrate the diagonal property in the first two cases.

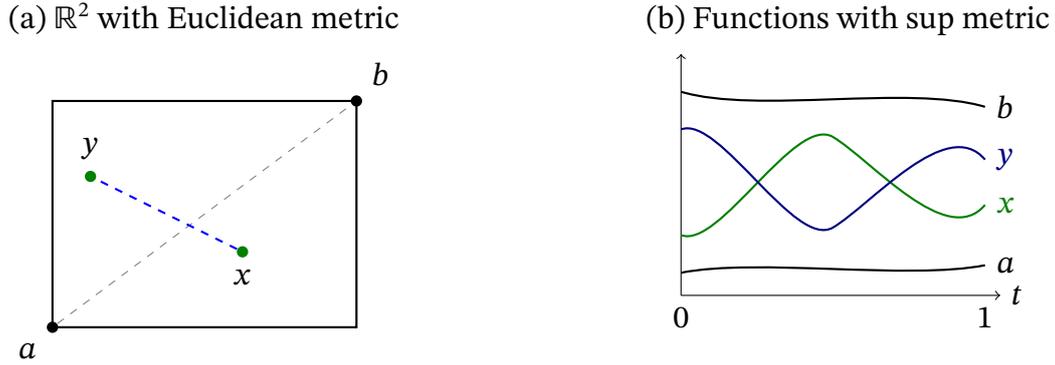
\begin{figure}
    \centering
    \begin{minipage}[t]{0.45\textwidth}
        \centering
        (a) $\R^{2}$ with Euclidean metric

        \vspace{0.5em}
        \begin{tikzpicture}[scale=1.0]
          \draw[thick] (0,0) rectangle (4,3);

          \node[circle, fill=black, inner sep=1.5pt, label=below left:$a$] (a) at (0,0) {};
          \node[circle, fill=black, inner sep=1.5pt, label=above right:$b$] (b) at (4,3) {};

          \draw[dashed, gray] (a) -- (b);

          \node[circle, fill=green!50!black, inner sep=1.5pt, label=below:$x$] (x) at (2.5,1) {};
          \node[circle, fill=green!50!black, inner sep=1.5pt, label=above:$y$] (y) at (0.5,2) {};

          \draw[dashed, blue, thick] (x) -- (y);

        \end{tikzpicture}
    \end{minipage}
    \hfill
    \begin{minipage}[t]{0.45\textwidth}
        \centering
        (b) Functions with sup metric

        \vspace{0.5em}
        \begin{tikzpicture}[scale=1.0]
          \draw[->] (0,0) -- (4.2,0) node[right] {$t$};
          \draw[->] (0,0) -- (0,3.2);
          \node[below] at (4,0) {$1$};
          \node[below] at (0,0) {$0$};

          \draw[thick, black] (0,0.3) .. controls (1,0.5) and (3,0.2) .. (4,0.4) node[right] {$a$};
          \draw[thick, black] (0,2.7) .. controls (1,2.4) and (3,2.8) .. (4,2.5) node[right] {$b$};

          \draw[thick, green!50!black] (0,0.8) .. controls (0.5,0.6) and (1.5,2.4) .. (2,2.1) .. controls (2.5,1.8) and (3.5,0.6) .. (4,1.2) node[right] {$x$};
          \draw[thick, blue!50!black] (0,2.2) .. controls (0.5,2.4) and (1.5,0.6) .. (2,0.9) .. controls (2.5,1.2) and (3.5,2.4) .. (4,1.8) node[right] {$y$};

        \end{tikzpicture}
    \end{minipage}
    \caption{\label{fig:diag} Diagonal property: if $x, y \in [a,b]$, then $d(x,y) \leq d(a,b)$}
\end{figure}

\subsection{Semigroups} Let $(\Xsf, d, \preceq)$ be a preordered metric
space and let $(T_t) \coloneq (T_t)_{t \in \TT}$ be a family of self-maps on $\Xsf$, where
$\TT$ is either $\RR_+$ or $\ZZ_+$.  The case $\TT = \RR_+$ represents
continuous time, while $\TT = \ZZ_+$ represents discrete time. The family
$(T_t)$ is called a \navy{semigroup} on $\Xsf$ when
\begin{equation*}
    T_0 = I
    \quad \text{and} \quad
    T_{s + t} = T_s \circ T_t
    \quad \text{for all } s, t \in \TT.
\end{equation*}
We refer to $\TT$ as the \navy{index set}.
When time is discrete, the semigroup property implies that $T_t$ is just 
$t$ compositions of $T \coloneq T_1$. 

(Note that, unlike some studies of operator
semigroups, no algebraic structure is imposed on the underlying set $\Xsf$.
Moreover, we do not require that $t \mapsto T_t$ is continuous.)

Given $x \in \Xsf$, the map $t \mapsto T_t x$ from $\TT \to \Xsf$
is called the \navy{trajectory} of $x$ under the semigroup $(T_t)$.
The trajectory is called \navy{order-bounded} if its range is contained in an
order interval; that is, if there exists $a, b \in \Xsf$ such that
\begin{equation}\label{eq:bt}
    a \preceq T_t \, x \preceq b \quad \text{for all $t \in \TT$}.
\end{equation}

We say that $x^* \in \Xsf$ is a \navy{stationary point} of $(T_t)$ if $x^*$ is a
fixed point of $T_t$ for all $t \in \TT$. We say that $(T_t)$ is
\begin{itemize}
    \item \navy{asymptotically contractive} on $\Xsf$ if $d(T_t \, x, T_t \, y)
        \to 0$ as $t \to \infty$ for all $x, y \in \Xsf$,
    \item \navy{order-preserving} if $T_t$ is order preserving for all $t \in
        \TT$, and
    \item \navy{globally stable} if $(T_t)$ has a unique stationary point $x^*
        \in \Xsf$
        and
        \begin{equation*}
            d(T_t \, x , x^*) \to 0
            \text{ as } t \to \infty
            \quad \text{for all} \quad
            x \in \Xsf.
        \end{equation*}
\end{itemize}

To simplify terminology, in the case where $\TT = \ZZ_+$ and $T_t$ is the $t$-th
composition of $T \coloneq T_1$ for all $t$, we assign properties
listed above directly to $T$.  For example, we say that $T$ is \navy{globally
stable} if the semigroup $(T_t)_{t \in \ZZ_+} = (T^t)_{t \in \ZZ_+}$ is globally stable.

\subsection{Results}

We can now state our main results in preordered metric space.
Throughout, we take $(\Xsf, d, \preceq)$ to be such a space.
We use the following
assumptions.

\begin{assumption}\label{a:env}
    The metric $d$ is complete and satisfies the diagonal property.
\end{assumption}

\begin{assumption}\label{a:env2}
    The preorder $\preceq$ is closed with respect to $d$.
\end{assumption}

In this setting we have the following result,
which is the main theoretical contribution of the paper.

\begin{theorem}\label{t:bk}
    Let $(T_t)$ be an order-preserving semigroup on $\Xsf$. 
    If \cref{a:env,a:env2} hold, then
    the following statements are equivalent:
    \begin{enumerate}
        \item $(T_t)$ is asymptotically contractive and has at least one
            order-bounded trajectory.
        \item $(T_t)$ is globally stable.
    \end{enumerate}
\end{theorem}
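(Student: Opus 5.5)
The direction (ii) $\Rightarrow$ (i) is the easy one. If $x^*$ is the unique stationary point and $d(T_t x, x^*) \to 0$ for every $x$, then the triangle inequality gives
\[
d(T_t x, T_t y) \leq d(T_t x, x^*) + d(x^*, T_t y) \to 0,
\]
so $(T_t)$ is asymptotically contractive. Moreover, the trajectory of $x^*$ is constant and lies in $[x^*, x^*]$, since $\preceq$ is reflexive. Hence it is order-bounded.

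For (i) $\Rightarrow$ (ii), fix $x$ with $a \preceq T_t x \preceq b$ for all $t$. The plan is to show that $(T_t x)$ is Cauchy as $t \to \infty$. Fix $s \in \TT$. For every $t$ we have $a \preceq T_s x \preceq b$, and applying the order-preserving map $T_t$ gives
\[
T_t a \preceq T_{t+s} x \preceq T_t b .
\]
We also have $T_t a \preceq T_t x \preceq T_t b$. Thus both $T_t x$ and $T_{t+s} x$ lie in the order interval $[T_t a, T_t b]$. The diagonal property \eqref{eq:diag} then yields
\[
d(T_t x, T_{t+s} x) \leq d(T_t a, T_t b),
\]
and the right-hand side tends to $0$ as $t \to \infty$ by asymptotic contractivity. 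The bound is uniform in $s$, so $(T_t x)$ is Cauchy (along any sequence $t_n \to \infty$ in the continuous case). By completeness it converges to some $x^* \in \Xsf$.

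Next, for every $y$ we have $d(T_t y, x^*) \leq d(T_t y, T_t x) + d(T_t x, x^*) \to 0$, so every trajectory converges to $x^*$. Uniqueness of a stationary point then follows: any stationary $z$ satisfies $z = T_t z \to x^*$, so $z = x^*$.

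The remaining and main obstacle is showing that $x^*$ is stationary, because $T_s$ is not assumed continuous. This is where closedness of the preorder (\cref{a:env2}) enters. First, from $T_t a \preceq T_{t+r} x \preceq T_t b$ and $T_{t+r} x \to x^*$ as $r \to \infty$, closedness gives $T_t a \preceq x^* \preceq T_t b$ for every $t$. Fixing $s$ and applying $T_s$ gives
\[
T_{t+s} a \preceq T_s x^* \preceq T_{t+s} b .
\]
The point $x^*$ lies in $[T_{t+s} a, T_{t+s} b]$ as well, so by the diagonal property
\[
d(T_s x^*, x^*) \leq d(T_{t+s} a, T_{t+s} b) \to 0 \quad \text{as } t \to \infty .
\]
Hence $T_s x^* = x^*$ for every $s$. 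Together with the previous paragraph, this establishes global stability.
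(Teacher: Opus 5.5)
Your proof is correct and follows essentially the same route as the paper. The diagonal property on the intervals $[T_t a, T_t b]$ gives the Cauchy property, closedness of $\preceq$ places $x^*$ in every $[T_s a, T_s b]$, and a second application of the diagonal property forces $T_s x^* = x^*$. The differences are cosmetic: you prove convergence of all trajectories and uniqueness before stationarity, and you compress the passage from sequences to the full continuous-time limit. The paper spells that step out separately (\cref{l:conv}), but it follows directly from your uniform bound $d(T_t x, T_{t+s} x) \leq d(T_t a, T_t b)$.
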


Switching to discrete time, we have the following corollary.
The corollary is obvious given \cref{t:bk}, but nonetheless worth stating due to
its value as a fixed point theorem.

\begin{corollary}\label{c:bk}
    Let $T$ be an order-preserving self-map on $\Xsf$.
    If \cref{a:env,a:env2} hold, then
    the following statements are equivalent:
    \begin{enumerate}
        \item $T$ is asymptotically contractive and has at least one
            order-bounded trajectory.
        \item $T$ is globally stable.
    \end{enumerate}
\end{corollary}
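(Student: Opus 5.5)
The corollary follows by specializing \cref{t:bk} to the index set $\TT = \ZZ_+$. The plan is to check that the hypotheses and both conclusions translate exactly under the convention that properties of $T$ mean properties of the semigroup $(T^t)_{t \in \ZZ_+}$.

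First, define $T_t \coloneq T^t$ for $t \in \ZZ_+$, with $T_0 = I$. The semigroup law $T_{s+t} = T^{s+t} = T^s \circ T^t = T_s \circ T_t$ holds trivially, so $(T_t)$ is a semigroup with index set $\ZZ_+$. It is order-preserving because compositions of order-preserving maps are order-preserving: induct on $t$, noting that $I$ preserves order and that if $T^t$ does, then so does $T^{t+1} = T \circ T^t$.

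Next, by the stated convention, ``$T$ is asymptotically contractive,'' ``$T$ has an order-bounded trajectory,'' and ``$T$ is globally stable'' mean precisely that $(T^t)$ has the corresponding property. So statements (i) and (ii) of the corollary are literally statements (i) and (ii) of \cref{t:bk} for this semigroup. \cref{a:env,a:env2} concern only $(\Xsf, d, \preceq)$ and are assumed, so \cref{t:bk} applies and gives the equivalence.

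One point should be remarked on, since the corollary is billed as a fixed point theorem. A stationary point of $(T^t)$ is exactly a fixed point of $T$. If $Tx^* = x^*$, then $T^t x^* = x^*$ for all $t$ by induction; conversely, a stationary point is fixed by $T_1 = T$. Hence global stability of $T$ means that $T$ has a unique fixed point and $T^t x \to x^*$ for every $x$. There is no real obstacle here: the only step requiring any care is this identification of stationary points with fixed points, and it is immediate.
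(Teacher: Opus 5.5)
Your proposal is correct and matches the paper's approach. The paper treats the corollary as immediate from \cref{t:bk} applied to the discrete-time semigroup $(T^t)_{t \in \ZZ_+}$. Your checks that this family is an order-preserving semigroup, and that its stationary points are exactly the fixed points of $T$, make explicit what the paper leaves implicit.
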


\cref{t:bk} assumes that the preorder $\preceq$ is closed.  This
assumption can be dropped when $(T_t)$ satisfies additional conditions:

\begin{theorem}\label{t:bk2}
    If \cref{a:env} holds and $x \mapsto T_t x$ is
    continuous on $\Xsf$ for all $t \in \TT$, then the equivalence of {\rm (i)} and {\rm (ii)} in \cref{t:bk} is also valid.
\end{theorem}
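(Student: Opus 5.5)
The implication (ii) $\Rightarrow$ (i) does not use closedness of $\preceq$ or continuity, so I will handle it exactly as for \cref{t:bk}. If $x^*$ is the unique stationary point and $d(T_t x, x^*) \to 0$ for all $x$, then the triangle inequality gives
\begin{equation*}
    d(T_t x, T_t y) \leq d(T_t x, x^*) + d(x^*, T_t y) \to 0,
\end{equation*}
so $(T_t)$ is asymptotically contractive. Moreover, the trajectory of $x^*$ is constant and lies in $[x^*, x^*]$, using reflexivity of the preorder. So the work is in (i) $\Rightarrow$ (ii), where continuity of each $T_t$ must replace closedness of $\preceq$.

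For (i) $\Rightarrow$ (ii), fix $x$ with $a \preceq T_t x \preceq b$ for all $t$. The plan is to show that $(T_t x)$ is Cauchy as $t \to \infty$. For $s, t \in \TT$ and $r \geq 0$, order preservation gives $T_r a \preceq T_{r+s} x \preceq T_r b$ and $T_r a \preceq T_{r+t} x \preceq T_r b$. The diagonal property then yields
\begin{equation*}
    d(T_{r+s} x, T_{r+t} x) \leq d(T_r a, T_r b) \to 0 \quad (r \to \infty),
\end{equation*}
uniformly in $s, t$, by asymptotic contractivity. Completeness then gives a limit $x^* = \lim_{t} T_t x$.

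Stationarity is where continuity enters. Fix $s \in \TT$. Continuity of $T_s$ gives $T_s x^* = \lim_t T_s T_t x = \lim_t T_{t+s} x = x^*$, so $x^*$ is a fixed point of every $T_s$. This is the step that needs care. In \cref{t:bk} the argument presumably placed $x^*$ inside order intervals via closedness; here I avoid any order comparison involving $x^*$ and use only the limit identity. Since $t \mapsto T_t$ need not be continuous, the limit must be taken along $t \to \infty$ in $\TT$ with $s$ fixed, which the semigroup property permits.

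Global convergence and uniqueness then follow from asymptotic contractivity alone. For any $y$, stationarity of $x^*$ gives $d(T_t y, x^*) = d(T_t y, T_t x^*) \to 0$. If $z^*$ is another stationary point, then $d(z^*, x^*) = d(T_t z^*, T_t x^*) \to 0$, so $z^* = x^*$. I expect the only genuine obstacle to be confirming that nothing in the Cauchy step silently uses closedness. It does not, since the argument compares only points of the trajectory and of $T_r a$, $T_r b$, and never the limit. Neither the limit nor the fixed-point step requires $\preceq$ to be closed.
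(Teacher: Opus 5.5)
Your proposal is correct and follows essentially the same route as the paper: the Cauchy-plus-completeness step reproduces the paper's lemmas on the order-bounded trajectory, which never use closedness, and your stationarity step $T_s x^* = \lim_t T_s T_t x = \lim_t T_{t+s} x = x^*$ is the same continuity argument the paper writes as $d(x^*, T_t x^*) \leq d(x^*, T_{u+t}\bar x) + d(T_t T_u \bar x, T_t x^*)$. Your explicit uniqueness check, $d(z^*, x^*) = d(T_t z^*, T_t x^*) \to 0$, spells out what the paper leaves implicit in the convergence of every trajectory to $x^*$.
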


The proofs of these and other results can be found in the appendix.

\section{Stability on Probability Spaces}\label{ss:md}

Next we consider the special case where the semigroups under investigation act
on a space of probability measures.  In other words, we analyze environments
where distributions are shifted forward in time under given laws of motion.
In this setting, we will be able to relate stability to a compactness-type
condition on trajectories (tightness) that is often applied in the analysis of
distribution dynamics.

\subsection{Background}

Let $S = (S, \leq)$ be a partially ordered
Polish space, that is, $S$ is a separable and completely metrizable topological
space equipped with a closed partial order $\leq$. The Borel sets of $S$ are denoted by $\bB$. A
function $f \colon S \to \RR$ is called \navy{increasing} if $f(x) \leq f(y)$
whenever $x \leq y$. It is called \navy{decreasing} if $f(y) \leq f(x)$ whenever
$x \leq y$.  This terminology extends to sets in $\bB$ by considering their
indicator functions. We let 
\begin{itemize}
    \item $bS$ represent the set of bounded measurable functions from $S$ to $\RR$, 
    \item $cbS$ represent the set of continuous functions in $bS$, 
    \item $ibS$ represent the set of increasing functions in $bS$, 
    \item $\pP(S)$ be the set of all probability measures on $(S, \bB)$.
\end{itemize}
Elements of $\pP(S)$ are also called \navy{distributions}.
For $h \in bS$ and $\phi \in \pP(S)$, we set
\begin{equation*}
    \phi(h) \coloneq \int h d \phi.    
\end{equation*}

We recall that a set $\Phi \subset \pP(S)$ is called \navy{tight} if,
for all
        $\epsilon > 0$, there exists a compact $K \subset S$ with $\phi(K^c) \leq
        \epsilon$ for all $\phi \in \Phi$.

For any pair $\phi, \psi \in \pP(S)$, write 
\begin{equation*}
    \phi \precsd \psi 
    \quad \iff \quad
    \phi(h) \leq \psi(h) \text{ for all $h \in ibS$}.
\end{equation*}
This is the usual notion of (first order) \navy{stochastic dominance}. Under our
assumptions on $(S, \leq)$, the relation $\precsd$ is a partial order on
$\pP(S)$ (see, e.g., \cite{kamae1978stochastic}). 

Throughout the paper, we impose the following restriction on $S$.

\begin{assumption}\label{a:pos}
    A subset of $S$ is compact if and only if it is closed and order bounded.
\end{assumption}

Here is a simple alternative characterization that is helpful for the proofs.

\begin{lemma}\label{l:cobeq}
    Assumption \ref{a:pos} holds if and only if order intervals in $S$ are
    compact and compact subsets of $S$ are order-bounded.
\end{lemma}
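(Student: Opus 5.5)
We prove the two directions separately. Throughout, an order interval $[a,b]$ in $S$ is closed, because the partial order $\leq$ is closed. To see this, take $x_n \in [a,b]$ with $x_n \to x$. Then $a \leq x_n$ and $x_n \leq b$ for all $n$. Passing to the limit in each relation (with the constant sequences $a$ and $b$) gives $a \leq x \leq b$.

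\emph{Assumption \ref{a:pos} implies the two properties.}
\begin{itemize}
    \item Let $[a,b]$ be an order interval. It is closed by the remark above, and it is trivially order-bounded. By \cref{a:pos}, $[a,b]$ is therefore compact.
    \item Let $K \subset S$ be compact. Then $K$ is compact, so the ``only if'' part of \cref{a:pos} gives that $K$ is order-bounded.
\end{itemize}

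\emph{The two properties imply Assumption \ref{a:pos}.}
\begin{itemize}
    \item Let $K$ be compact. By hypothesis $K$ is order-bounded. Since $S$ is a metrizable (hence Hausdorff) space, compact subsets of $S$ are closed, so $K$ is closed.
    \item Let $C$ be closed and order-bounded, say $C \subset [a,b]$. By hypothesis $[a,b]$ is compact. A closed subset of a compact set is compact, so $C$ is compact.
\end{itemize}

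The only step that is not pure bookkeeping is the closedness of order intervals. It depends on the closedness of $\leq$, which is assumed in the standing setup of this section.
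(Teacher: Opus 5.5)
Your proof is correct and matches the paper's argument essentially step for step. One direction uses closedness of order intervals (from closedness of $\leq$) together with \cref{a:pos}. The other uses that compact sets in the Hausdorff space $S$ are closed and that closed subsets of the compact set $[a,b]$ are compact. Your sequential check that $[a,b]$ is closed (valid since $S$ is metrizable) simply spells out a detail the paper states in a single clause.
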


\begin{proof}
    Suppose first that \cref{a:pos} holds.  Order intervals are closed (since
    $\leq$ is closed) and order-bounded, hence compact by \cref{a:pos}.  If $K$ is compact, then $K$
    is order-bounded by \cref{a:pos}.  Conversely, suppose that order intervals
    are compact and compact sets are order-bounded.  If $K$ is compact, then $K$
    is closed (since $S$ is Hausdorff) and order-bounded (by assumption).  If
    $K$ is closed and order-bounded, then $K \subseteq [a,b]$ for some $a, b \in
    S$.  Since $[a,b]$ is compact and $K$ is a closed subset, $K$ is compact.
\end{proof}

\begin{example}\label{ex:pos}
    \cref{a:pos} holds when $S$ is a product of intervals in $\RR^n$ with the
    componentwise partial order and Euclidean metric. This includes $\RR^n$,
    $[0,\infty)^n$, $(0,\infty)^n$, $(0,1)^n$, $[0,1]^n$, and mixed cases such as $[0,1)
    \times (0,\infty)$.
\end{example}

\cref{a:pos} has two useful implications for the results below.  One is
completeness of the Bhattacharya metric introduced below. Another is an
equivalence between tight and order bounded subsets of $\pP(S)$:

\begin{proposition}\label{p:tiob}
    $\Lambda \subset \pP(S)$ is tight if and only if $\Lambda$ is order bounded
    in $(\pP(S), \precsd)$.
\end{proposition}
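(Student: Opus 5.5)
We prove the two implications separately, using \cref{l:cobeq} to pass between compactness and order-boundedness in $S$, and the fact that for an increasing (resp.\ decreasing) indicator $\mathbbm{1}_A$ we have $\phi(A) \leq \psi(A)$ (resp.\ $\geq$) whenever $\phi \precsd \psi$.

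\emph{Order bounded implies tight.} Suppose $\mu \precsd \phi \precsd \nu$ for all $\phi \in \Lambda$. Fix $\epsilon > 0$. Since $S$ is Polish, the single measures $\mu$ and $\nu$ are tight, so there is a compact $K$ with $\mu(K^c) \leq \epsilon/2$ and $\nu(K^c) \leq \epsilon/2$. By \cref{l:cobeq}, $K \subset [a,b]$ for some $a \leq b$. The set $U \coloneq \{x : x \leq b\}^c$ has an increasing indicator (if $x \in U$ and $x \leq y$, then $y \leq b$ would force $x \leq b$). Hence $\phi(U) \leq \nu(U) \leq \nu(K^c) \leq \epsilon/2$. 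Symmetrically, $L \coloneq \{x : a \leq x\}^c$ has a decreasing indicator, so $\phi(L) \leq \mu(L) \leq \epsilon/2$. Thus $\phi([a,b]^c) \leq \epsilon$ for every $\phi \in \Lambda$, and $[a,b]$ is compact by \cref{l:cobeq}. Therefore $\Lambda$ is tight.

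\emph{Tight implies order bounded.} This is the main step: we must construct dominating measures. Choose compact sets $K_n$ with $\phi(K_n^c) \leq 2^{-n}$ for all $\phi \in \Lambda$. By \cref{l:cobeq}, each $K_n \subset [a_n, b_n]$. Replacing $K_n$ by $K_1 \cup \cdots \cup K_n$ (still compact, hence order bounded), we may take the $K_n$ increasing.

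For the upper bound, set $\nu \coloneq \sum_{n \geq 1} 2^{-n} \delta_{b_n}$, adjusted so that it is a probability measure; for example, use weights $c_n \geq 2^{-n+1}$ summing to one, and if needed pass to a subsequence of the $K_n$. We want $\phi(h) \leq \nu(h)$ for every $h \in ibS$. By adding constants and scaling, it suffices to treat $0 \leq h \leq 1$. On $K_n \setminus K_{n-1}$ we have $h \leq h(b_n)$, so
\begin{equation*}
    \phi(h) \leq \sum_n h(b_n)\, \phi(K_n \setminus K_{n-1}) + \phi\Bigl(\bigcap_n K_n^c\Bigr),
\end{equation*}
and the last term is $0$. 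The difficulty is that $\phi(K_n \setminus K_{n-1})$ varies with $\phi$ and need not be dominated by $c_n$ term by term. To handle this, we also make the $b_n$ increasing: take $b_n$ to be an upper bound of $\{b_1, \dots, b_n\}$, which exists because a finite set is compact and hence order bounded. Then $n \mapsto h(b_n)$ is increasing. By Abel summation, the bound becomes
\begin{equation*}
    h(b_1) + \sum_{n \geq 1} \bigl(h(b_{n+1}) - h(b_n)\bigr)\, \phi(K_n^c),
\end{equation*}
which is at most the same expression with $\phi(K_n^c)$ replaced by $\nu(\{b_{n+1}, b_{n+2}, \dots\})$, provided the tail masses satisfy $\nu(\{b_{m}: m > n\}) \geq 2^{-n}$. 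The same Abel identity applied to $\nu$ then gives exactly $\nu(h)$. So the proof reduces to choosing weights with tails $\sum_{m > n} c_m \geq 2^{-n}$ and total mass one. For instance, take $\phi(K_n^c) \leq 4^{-n}$ via a subsequence and $c_m = 2^{-m}$; this bookkeeping is the step I expect to need the most care. The lower bound $\mu \precsd \phi$ follows symmetrically using decreasing $a_n$.
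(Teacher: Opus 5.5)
Your proof is correct and follows essentially the paper's route. The order bounded $\Rightarrow$ tight direction is identical, and for tight $\Rightarrow$ order bounded the paper builds $u = \sum_n \alpha_n \delta_{y_n}$ on an increasing sequence of upper bounds with $\alpha_n = \epsilon_{n-1} - \epsilon_n$, which is exactly your tail condition met with equality; it checks domination on increasing sets $J$ rather than through your Abel summation over increasing functions, and it makes the bounds monotone recursively (choosing $I_n \supset I_{n-1} \cup K_n$), which is how your $b_n$ should be picked (an upper bound of $b_{n-1}$ and the old $b_n$). Also, $c_m = 2^{-m}$ already gives tails $\sum_{m>n} c_m = 2^{-n} \ge \sup_{\phi} \phi(K_n^c)$, so no subsequence or $4^{-n}$ bound is needed.
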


Below, \cref{p:tiob} will be used to connect tightness with global stability.

In this paper, when considering stability of maps over distribution space, we
always work with the \navy{Bhattacharya metric} $\beta$ on $\pP(S)$, which is defined by
\begin{equation}\label{eq:tvsio}
    \beta(\phi, \psi) 
    = \sup_{h \in ibS, \; |h| \leq 1} |\phi(h) - \psi(h)|.
\end{equation}
\cite{kamihigashi2019unified} show that $\beta$ is a complete metric on $\pP(S)$
under \cref{a:pos} (see Theorem~4.1).

\begin{lemma}\label{l:mdi}
    The metric $\beta$ satisfies the diagonal property on $\pP(S)$.
\end{lemma}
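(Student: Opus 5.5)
We need to show: if $\phi, \psi, \mu, \nu \in \pP(S)$ with $\mu \precsd \phi \precsd \nu$ and $\mu \precsd \psi \precsd \nu$, then $\beta(\phi,\psi) \leq \beta(\mu,\nu)$. The plan is to reduce the supremum in \eqref{eq:tvsio} to a one-sided comparison on each test function. The key point is that the class of test functions $\{h \in ibS : |h| \leq 1\}$ consists of increasing functions, so stochastic dominance gives direct two-sided control of $\phi(h)$ and $\psi(h)$ by $\mu(h)$ and $\nu(h)$.

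First fix $h \in ibS$ with $|h| \leq 1$. Since $h$ is increasing and bounded, the definition of $\precsd$ gives
\begin{equation*}
    \mu(h) \leq \phi(h) \leq \nu(h)
    \quad \text{and} \quad
    \mu(h) \leq \psi(h) \leq \nu(h).
\end{equation*}
Both $\phi(h)$ and $\psi(h)$ therefore lie in the real interval $[\mu(h), \nu(h)]$. Hence
\begin{equation*}
    |\phi(h) - \psi(h)| \leq \nu(h) - \mu(h) = |\mu(h) - \nu(h)| \leq \beta(\mu,\nu).
\end{equation*}
This is just the one-dimensional diagonal property on $\RR$, applied to each test function separately.

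Then take the supremum over all such $h$ on the left, which yields $\beta(\phi,\psi) \leq \beta(\mu,\nu)$. This holds for arbitrary $\phi,\psi \in [\mu,\nu]$, which is exactly \eqref{eq:diag} for $\beta$.

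There is essentially no obstacle here. The only point to check is that the test class in \eqref{eq:tvsio} is a subset of $ibS$, so that the dominance inequalities apply to every $h$ in the supremum; this is immediate from the definition. No use of \cref{a:pos} or completeness is needed.
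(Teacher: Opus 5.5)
Your proposal is correct and follows essentially the same argument as the paper. For each increasing test function $h$ with $|h| \leq 1$, you bound $|\phi(h) - \psi(h)|$ by $u(h) - \ell(h) \leq \beta(\ell, u)$ and then take the supremum; you write $\mu, \nu$ for the paper's $\ell, u$.
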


\begin{proof}
    Fix $\phi, \psi, \ell$ and $u$ in $\pP(S)$ with $\phi, \psi \in [\ell, u]$.  
    Given $h \in ibS$ with $|h| \leq 1$, we have
    \begin{equation*}
        \phi(h) - \psi(h) \leq u(h) - \ell(h) 
        \quad \text{and} \quad
        \psi(h) - \phi(h) \leq u(h) - \ell(h) .
    \end{equation*}
    It follows that 
        $|\phi(h) - \psi(h)| 
        \leq u(h) - \ell(h) 
        = |u(h) - \ell(h)|
        \leq \beta(\ell, u)$.
    The diagonal property is obtained by taking the sup over $h \in ibS$ with $|h| \leq 1$.
\end{proof}

Another useful observation is as follows.

\begin{lemma}\label{l:betaclosed}
    The partial order $\precsd$ on $\pP(S)$ is closed with respect to $\beta$.
\end{lemma}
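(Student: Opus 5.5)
The plan is to verify the definition of closedness directly. Take sequences $(\phi_i)$ and $(\psi_i)$ in $\pP(S)$ with $\phi_i \precsd \psi_i$ for all $i$, $\beta(\phi_i, \phi) \to 0$ and $\beta(\psi_i, \psi) \to 0$. We must show $\phi \precsd \psi$, that is, $\phi(h) \leq \psi(h)$ for every $h \in ibS$. The key observation is that convergence in $\beta$ gives convergence of integrals of every bounded increasing function, uniformly over those bounded by one. So the inequality passes to the limit without any topological argument on $S$.

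First reduce to $|h| \leq 1$. For $h \in ibS$ with $h \neq 0$, set $c \coloneq \sup|h| > 0$ and $g \coloneq h/c$. Then $g \in ibS$ with $|g| \leq 1$, and by linearity of the integral $\phi(h) \leq \psi(h)$ holds if and only if $\phi(g) \leq \psi(g)$. The case $h = 0$ is trivial.

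Now fix $g \in ibS$ with $|g| \leq 1$. By the definition \eqref{eq:tvsio},
\begin{equation*}
    |\phi_i(g) - \phi(g)| \leq \beta(\phi_i, \phi) \to 0
    \quad \text{and} \quad
    |\psi_i(g) - \psi(g)| \leq \beta(\psi_i, \psi) \to 0 .
\end{equation*}
Since $\phi_i \precsd \psi_i$ gives $\phi_i(g) \leq \psi_i(g)$ for every $i$, letting $i \to \infty$ yields $\phi(g) \leq \psi(g)$. Undoing the normalization gives $\phi(h) \leq \psi(h)$ for all $h \in ibS$, so $\phi \precsd \psi$.

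No step is a genuine obstacle. The only point needing care is the normalization, which ensures that every $h \in ibS$, not only those with $|h| \leq 1$, is controlled by $\beta$. Note that the argument uses neither \cref{a:pos} nor completeness of $\beta$. It also does not use the fact that $\precsd$ is a partial order rather than a preorder.
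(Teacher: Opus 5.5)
Your proof is correct and follows the paper's argument essentially step for step. You fix $h \in ibS$ with $|h| \leq 1$, use the definition of $\beta$ to get $\phi_i(h) \to \phi(h)$ and $\psi_i(h) \to \psi(h)$, pass $\phi_i(h) \leq \psi_i(h)$ to the limit, and extend to all of $ibS$, where your explicit normalization by $\sup|h|$ just spells out the paper's ``by scaling'' step.
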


\begin{proof}
    Suppose $(\phi_n), (\psi_n) \subset \pP(S)$ with $\phi_n \precsd \psi_n$ for all $n$,
    $\beta(\phi_n, \phi) \to 0$, and $\beta(\psi_n, \psi) \to 0$.
    Fix $h \in ibS$ with $|h| \leq 1$.  From convergence in the Bhattacharya metric, we obtain 
    $\phi_n(h) \to \phi(h)$ and $\psi_n(h) \to \psi(h)$ as
    $n \to \infty$.  We also have $\phi_n(h) \leq \psi_n(h)$ for all $n$.  Hence,
    taking limits, $\phi(h) \leq \psi(h)$.
    This holds for all $h \in ibS$ with $|h| \leq 1$ and, by scaling, extends to all
    $h \in ibS$. Therefore $\phi \precsd \psi$.
\end{proof}

\subsection{Semigroups over Distributions}

Let us now consider a semigroup $(T_t)_{t \in \TT}$ on $\pP(S)$.
The semigroup represents a model that shifts distributions forward in time via
$t \mapsto T_t \phi$, where $\phi$ is some initial distribution.
In all of what follows,
$\pP(S)$ is endowed with the stochastic dominance partial order $\precsd$ and
the Bhattacharya metric $\beta$. Moreover, $S = (S, \leq)$ is itself a partially
ordered Polish space satisfying \cref{a:pos}. In this setting, we can state the
following result.

\begin{theorem}\label{t:act}
    Let $(T_t)$ be an order-preserving semigroup on $\pP(S)$. The following statements are equivalent:
    \begin{enumerate}
        \item $(T_t)$ is globally stable on $\pP(S)$.
        \item $(T_t)$ is asymptotically contractive and has an order-bounded trajectory in $\pP(S)$.
        \item $(T_t)$ is asymptotically contractive and has a tight trajectory in $\pP(S)$.
    \end{enumerate}
\end{theorem}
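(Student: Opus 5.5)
The plan is to reduce everything to \cref{t:bk} applied to the preordered metric space $(\pP(S), \beta, \precsd)$, and then use \cref{p:tiob} to pass between order boundedness and tightness.

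\textbf{Step 1: check the hypotheses of \cref{t:bk}.} By Theorem~4.1 of \cite{kamihigashi2019unified}, $\beta$ is a complete metric on $\pP(S)$ under \cref{a:pos}. By \cref{l:mdi}, $\beta$ has the diagonal property with respect to $\precsd$. Together these give \cref{a:env}. By \cref{l:betaclosed}, $\precsd$ is closed with respect to $\beta$, which gives \cref{a:env2}. Since $(T_t)$ is order-preserving by hypothesis, \cref{t:bk} applies directly and yields the equivalence of (i) and (ii).

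\textbf{Step 2: show (ii) $\iff$ (iii).} The asymptotic contractivity clause is common to both statements, so it suffices to show that a trajectory is order-bounded if and only if it is tight. For fixed $\phi$, apply \cref{p:tiob} to the set $\Lambda \coloneq \{T_t \phi : t \in \TT\}$. That proposition says $\Lambda$ is tight if and only if it is order-bounded in $(\pP(S), \precsd)$. This is exactly the statement that the trajectory of $\phi$ is tight if and only if it is order-bounded in the sense of \eqref{eq:bt}. Hence (ii) and (iii) are equivalent.

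\textbf{Where the difficulty lies.} Given the earlier results, the argument is bookkeeping. The only points needing care are that the trajectory is order-bounded as a set, which matches the definition in \eqref{eq:bt}, and that \cref{a:pos} is in force so that both completeness of $\beta$ and \cref{p:tiob} are available. If \cref{p:tiob} were not assumed, proving it would be the main obstacle. Tightness would give compact sets $K_n$ with $\phi(K_n^c) \le 2^{-n}$; by \cref{l:cobeq} each $K_n$ lies in an order interval, and from these one would assemble explicit lower and upper bounding distributions, placing mass $2^{-n}$ at the lower and upper bounds of $K_n$. Conversely, $\ell \precsd \phi \precsd u$ would give tightness via compact order intervals, using tightness of the two measures $\ell$ and $u$ on the Polish space $S$.
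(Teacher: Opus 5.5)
Your proposal is correct and follows the paper's proof: the paper also gets (ii)$\Leftrightarrow$(iii) directly from \cref{p:tiob}, and gets (i)$\Leftrightarrow$(ii) by checking completeness of $\beta$, the diagonal property (\cref{l:mdi}) and closedness of $\precsd$ (\cref{l:betaclosed}), so that \cref{t:bk} applies. The only cosmetic difference is that for (i)$\Rightarrow$(ii) the paper names the constant trajectory at the stationary distribution as the order-bounded one, which is what \cref{t:bk} does internally anyway.
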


The statement that $(T_t)$ has a tight trajectory means that there exists at least
one $\phi \in \pP(S)$ such that $(T_t \phi)_{t \in \TT}$ is tight in $\pP(S)$.

Specializing to discrete time, we have the following corollary.

\begin{corollary}\label{c:act}
    If $T \colon \pP(S) \to \pP(S)$ is order-preserving, then the following statements are equivalent:
    \begin{enumerate}
        \item $T$ is globally stable on $\pP(S)$.
        \item $T$ is asymptotically contractive and has an order-bounded trajectory in $\pP(S)$.
        \item $T$ is asymptotically contractive and has a tight trajectory in $\pP(S)$.
    \end{enumerate}
\end{corollary}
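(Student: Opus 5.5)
The corollary is the special case $\TT = \ZZ_+$ of \cref{t:act}, so I will simply specialize. Given an order-preserving self-map $T$ on $\pP(S)$, I define the family $(T_t)_{t \in \ZZ_+}$ by $T_0 = I$ and $T_t = T^t$.

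First I check that this family satisfies the hypotheses of \cref{t:act}. It is a semigroup, since $T^{s+t} = T^s \circ T^t$. It is order-preserving, since a composition of order-preserving maps is order-preserving; formally, this is an induction on $t$ starting from $T_0 = I$. By the terminological convention adopted after the definitions in \cref{sec:prelim}, the statements ``$T$ is globally stable'', ``$T$ is asymptotically contractive'', and ``$T$ has an order-bounded (resp.\ tight) trajectory'' mean exactly the corresponding statements for $(T^t)_{t\in\ZZ_+}$. Hence items (i)--(iii) of the corollary coincide verbatim with items (i)--(iii) of \cref{t:act}, and the equivalence follows.

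The standing assumptions used by \cref{t:act} are in force here as well: $S$ is a partially ordered Polish space satisfying \cref{a:pos}, and $\pP(S)$ carries $\precsd$ and $\beta$. So nothing further needs to be verified.

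It is worth recording why \cref{t:act} holds, since the corollary inherits its content. The equivalence (ii)$\iff$(iii) comes from \cref{p:tiob}: a trajectory is tight if and only if it is order-bounded in $(\pP(S),\precsd)$. The equivalence (i)$\iff$(ii) comes from \cref{t:bk}, or equivalently \cref{c:bk}. Its hypotheses hold because $\beta$ is complete under \cref{a:pos} (Theorem~4.1 of \cite{kamihigashi2019unified}), $\beta$ has the diagonal property by \cref{l:mdi}, and $\precsd$ is closed by \cref{l:betaclosed}.

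There is no real obstacle. The only point to watch is that the discrete-time terminology attached to $T$ matches the semigroup notions exactly, and that is immediate from the stated convention.
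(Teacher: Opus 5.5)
Your proposal is correct and follows the same route as the paper: the paper gives no separate proof and obtains the corollary by specializing \cref{t:act} to $\TT = \ZZ_+$ with $T_t = T^t$. Via the stated terminological convention, items (i)--(iii) are then exactly those of \cref{t:act}.
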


In summary, \cref{t:act} and \cref{c:act} provide necessary and sufficient
conditions for global stability of order-preserving semigroups and maps on
distribution space: asymptotic contractivity combined with tightness of at
least one trajectory.

Often the operators that we consider will be Markov operators generated from
transition probability functions. Such operators have other useful properties,
such as linearity with respect to distributions.  We discuss this case in
\cref{s:pmm}. We also note, however, that \cref{t:act} and \cref{c:act} can be
used to study nonlinear maps over distributions, which occur frequently in
economic dynamics (see, e.g., \cite{light2026invariant}).

\section{Probabilistic Markov Models}\label{s:pmm}

In this section we provide background on probabilistic aspects of Markov
dynamics, and on the connection between underlying Markov models and the
operators and semigroups they generate.

\subsection{Stochastic Kernels}

A \navy{stochastic kernel on $S$} is a map $P$ from $S \times \bB$ to $[0,1]$
such that $B \mapsto P(x,B)$ is in $\pP(S)$ for each $x \in S$ and $x \mapsto
P(x,B)$ is measurable for each $B \in \bB$.  The kernel $P$ is called
\navy{increasing} if $x \leq x'$ implies $P(x, \cdot) \precsd P(x', \cdot)$.
This is equivalent to both of the following statements:
\begin{itemize}
    \item $Ph \in ibS$ whenever $h \in ibS$.
    \item $\phi P \precsd \psi P$ whenever $\phi \precsd \psi$.
\end{itemize}
Each stochastic kernel $P$ on $S$ generates two operators.  The first is the
\navy{left Markov operator}, defined via
\begin{equation}\label{eq:lmo}
    \phi \mapsto \phi P, \qquad
    (\phi P) (B)  \coloneq \int P(x,B) \phi(\diff x) \qquad (B \in \bB).
\end{equation}
The second is the \navy{right Markov operator}, defined by
\begin{equation}\label{eq:rmo}
    h \mapsto Ph, \qquad
    (Ph)(x) \coloneq \int h(y) P(x,\diff y)
    \qquad (h \in bS, \; x \in S).
\end{equation}
For convenience, and in line with much of the literature (see, e.g.,
\cite{meyn2009markov}), the same symbol $P$ is used for both operators, as well
as for the stochastic kernel.  In addition, we will drop the terms ``left'' and
``right'', assuming that the meaning will typically be clear from context, and
adding clarification when necessary.

In \eqref{eq:lmo}, the map $\phi \mapsto \phi P$ is understood as updating
distribution $\phi$ to the next period. (One easily confirms that $B \mapsto
(\phi P)(B)$ is a probability measure on $\bB$, so the mapping is from $\pP(S)$
to $\pP(S)$.) In \eqref{eq:rmo}, the value $(Ph)(x)$ is interpreted as the
conditional expectation of $h(X)$ when $X$ is the next period state and $x$ is
the current state.

For the two operators in \eqref{eq:lmo} and \eqref{eq:rmo},
we have the well-known ``duality'' relation
\begin{equation}\label{eq:dual}
    \text{$(\phi P)(h) = \phi(Ph)$ for any $\phi \in \pP(S)$ and any $h \in bS$}.
\end{equation}
(This follows directly from the definitions and Fubini's theorem.) We note that the kernel $P$ is increasing if and only
if the Markov operator $\phi \mapsto \phi P$ is order-preserving on $(\pP(S),
\precsd)$.

The following result parallels the well-known fact that total variation distance
between distributions is not increased after passing them through a Markov
kernel.  It will be useful for our analysis of continuous time models, and also
suggests that asymptotic contractivity should be attainable under some
additional conditions, which turn out to be mixing conditions.

\begin{lemma}\label{lem:nonexp}
    If $P$ is an increasing kernel, then the Markov operator $\phi \mapsto \phi
    P$ is nonexpansive in the Bhattacharya metric:
    \begin{equation}
        \beta(\phi P, \psi P) \leq \beta(\phi, \psi)
        \quad \text{for all} \quad 
        \phi, \psi \in \pP(S).
    \end{equation}
\end{lemma}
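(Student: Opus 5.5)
The plan is to use the duality relation \eqref{eq:dual} to move the supremum in the definition \eqref{eq:tvsio} of $\beta$ from the distributions onto the test functions. Because $P$ is increasing, the right Markov operator maps the unit ball of $ibS$ into itself.

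First, fix $\phi, \psi \in \pP(S)$ and a function $h \in ibS$ with $|h| \leq 1$. Since $P$ is increasing, the equivalent characterization recalled above gives $Ph \in ibS$. Since each $P(x, \cdot)$ is a probability measure, we also have
\begin{equation*}
    |(Ph)(x)| \leq \int |h(y)| \, P(x, \diff y) \leq 1
    \qquad \text{for all } x \in S .
\end{equation*}
Hence $Ph$ belongs to the class of test functions over which the supremum in \eqref{eq:tvsio} is taken.

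Second, apply duality to obtain
\begin{equation*}
    |(\phi P)(h) - (\psi P)(h)| = |\phi(Ph) - \psi(Ph)| \leq \beta(\phi, \psi).
\end{equation*}
The inequality holds because $Ph$ is an admissible test function in the definition of $\beta(\phi, \psi)$.

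Third, take the supremum over all $h \in ibS$ with $|h| \leq 1$ on the left-hand side. This yields $\beta(\phi P, \psi P) \leq \beta(\phi, \psi)$, which is the claim.

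There is no serious obstacle in this argument. The only point that needs checking is that $Ph$ is measurable, bounded and increasing, so that it is a legitimate test function. Measurability and boundedness follow from the kernel definition, since $P(\cdot, B)$ is measurable and simple functions approximate $h$. Monotonicity is precisely the stated equivalence between $P$ being increasing and $P$ mapping $ibS$ into $ibS$.
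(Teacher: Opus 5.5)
Your proposal is correct and follows essentially the same route as the paper. Duality moves $P$ onto the test function, and since $Ph$ is again increasing with $|Ph| \leq 1$, it is an admissible test function in the supremum defining $\beta(\phi, \psi)$.
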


\begin{proof}
    Observe that, for probability measures $\phi, \psi \in \pP(S)$, we have
    \begin{equation*}
        \beta(\phi P, \psi P) 
        = \sup_h \left| (\phi P)(h) - (\psi P)(h) \right|
        = \sup_h \left| (\phi )(Ph) - (\psi )(Ph) \right|,
    \end{equation*}
    where the supremum is over $h \in ibS$ with $|h| \leq 1$ and the second equality is by the
    duality relationship in \eqref{eq:dual}. Since $P$ is increasing and $h$ is
    increasing, $Ph$ is increasing. Since $|h|
    \leq 1$, we have $|Ph| \leq 1$, so the last expression is dominated by 
    $\beta(\phi, \psi)$.
\end{proof}

\subsection{Transition Probability Functions}

Following standard terminology, a \navy{transition probability function} is a
family of stochastic kernels $(P_t)_{t \in \TT}$ such that 
\begin{enumerate}
    \item $P_0(x, \cdot) = \delta_x$ for all $x \in S$, and
    \item the \navy{Chapman--Kolmogorov equation} holds:
        \begin{equation}\label{eq:ck}
            P_{s + t}(x, B) = \int P_t(y, B) P_s(x, \diff y)
            \qquad (s, t \in \TT, \; x \in S, \; B \in \bB).
        \end{equation}
\end{enumerate}
When $(P_t)_{t \in \TT}$ is a transition probability function, the family of maps 
$\phi \mapsto \phi P_t$,
\begin{equation}\label{eq:pmp}
    (\phi P_t) (B)  \coloneq \int P_t(x,B) \phi(\diff x) 
    \qquad (B \in \bB, \; t \in \TT)
\end{equation}
forms a semigroup on $\pP(S)$, with the semigroup property
$P_{s + t} = P_s \circ P_t$ following from the Chapman--Kolmogorov relations.
In this setting, we say that $(P_t)_{t \in \TT}$ is a \navy{Markov semigroup on
$S$}. We use the same symbol $(P_t)_{t \in \TT}$ to represent the Markov semigroup and the transition probability
function, since both represent the same object.\footnote{The transition
probability function defines the Markov semigroup on $S$ via \eqref{eq:pmp}.
The semigroup defines the transition probability function via
$P_t(x, B) \coloneq (\delta_x P_t)(B)$ for all $x \in S$, $t \in \TT$ and $B \in \bB$.}

\begin{remark}\label{r:sgn}
    Each transition probability function $(P_t)_{t \in \TT}$ also defines a
    family of operators $h \mapsto P_t h$, indexed by $t \in \TT$, each $P_t$
    mapping $bS$ to $bS$, through the
    right Markov operator relation in \eqref{eq:rmo}.  Some authors have this in
    mind when they refer to the semigroup generated by the transition
    probability function, rather than the family of maps $\phi \mapsto \phi
    P_t$ sending $\pP(S)$ to $\pP(S)$.  This is just a matter of convention.
    Our terminology is similar to \cite{LasotaMackey1994} and \cite{rudnicki2017piecewise}.
\end{remark}

\begin{example}
    In the discrete time case, where we work with a fixed stochastic kernel $P$
    on $S$, we set $P_0 \coloneq I$ and $P_t \coloneq P^t$ for all $t \in \NN$.  The family
    $(P_t)_{t \in \ZZ_+}$ is a transition probability function.
\end{example}

\begin{example}
    Let $S=\RR$ and $P(t, x, \diff y) = p(t, x, y) \diff y$, where
    \begin{equation*}
        p(t,x, y) \coloneq 
        \sqrt{\frac{\theta}{\pi \sigma^2 (1 - e^{-2\theta t})}} 
        \exp\left(
                -\frac{\theta(y - x e^{-\theta t})^2}{\sigma^2(1 - e^{-2\theta t})}
            \right).
    \end{equation*}
    This is the transition density function for the Ornstein--Uhlenbeck process
    \begin{equation}\label{eq:ou-sde}
        dX_t = -\theta X_t \, dt + \sigma \, dW_t, \quad X_0 = x,
    \end{equation}
    where $\theta > 0$ is the mean-reversion rate, $\sigma > 0$ is the volatility
    parameter, and $(W_t)_{t \geq 0}$ is Brownian motion.  The family of maps
    $(P_t)_{t \geq 0}$ defined by 
    \begin{equation*}
        \phi \mapsto \phi P_t
        \quad \text{with} 
        \quad
        (\phi P_t) (B)  \coloneq \int \int_B p(t, x, y) \, \diff y \, \phi(\diff x)    
    \end{equation*}
    is the Markov semigroup on $\RR$ generated by the Ornstein--Uhlenbeck transition
    probability function.  For \eqref{eq:ou-sde}, the solution can be expressed
    as
    \begin{equation}\label{eq:ou-sol}
        X_t = e^{-\theta t} x + \sigma_t Z,
        \quad \text{where} \quad
        \sigma_t \coloneq \sigma \sqrt{\frac{1 - e^{-2\theta t}}{2\theta}}
    \end{equation}
    and $Z$ is a standard normal random variable. Hence, for the right Markov
    operators, we have
    \begin{equation}\label{eq:ou-pth}
        (P_t h)(x) = \EE [h(e^{-\theta t} x + \sigma_t Z)]
        \qquad (x \in S, \; h \in bS).
    \end{equation}
\end{example}

Returning to the general case, a transition probability function $(P_t)_{t \in \TT} = (P_t)$ will be called
\navy{increasing} if $P_t$ is an increasing stochastic kernel for all $t \in
\TT$.  For example, \eqref{eq:ou-pth} implies that $P_t h$ is increasing
whenever $h$ is increasing, so the Ornstein--Uhlenbeck transition probability
function is order-preserving. Evidently, $(P_t)_{t \in \TT}$ is an increasing
transition probability function if and only if it is
order-preserving when viewed as a semigroup on $(\pP(S), \precsd)$, as in
\eqref{eq:pmp}.

We say that a transition probability function $(P_t)$ is \navy{bounded in probability} when the
family of distributions $(P_t(x, \cdot))_{t \in \TT}$ is tight for all $x \in S$.
For example, for the Ornstein--Uhlenbeck process, the representation \eqref{eq:ou-sol}
and $\sigma_t \leq \bar \sigma \coloneq \sigma / \sqrt{2\theta}$ for all $t$
give $|X_t| \leq |x| + \bar \sigma |Z|$, so the family $(P_t(x, \cdot))_{t
\geq 0}$ is tight.

We recall that an $S$-valued stochastic process $(X_t) \coloneq (X_t)_{t \in
\TT}$ supported on a probability space $(\Omega, \fF, \PP)$ is called a
\navy{Markov process} if
\begin{equation*}
    \PP\{X_t \in B \,|\, \fF_s\} = \PP\{X_t \in B \,|\, X_s\}
    \;\; \PP\text{-a.s.}
    \; \text{ for all $s, t \in \TT$ with $s \leq t$ and all $B \in \bB$},
\end{equation*}
where $(\fF_t)_{t \in \TT}$ is the natural filtration generated by $(X_t)$.
A \navy{stopping time} of $(\fF_t)$ is a random variable
$\tau \colon \Omega \to \TT \cup \{\infty\}$ such that $\{\tau \leq t\} \in
\fF_t$ for all $t \in \TT$.
We call $(X_t)$ a
\navy{strong Markov process} if, in addition, the Markov property holds at all
stopping times of $(\fF_t)$: for every $(\fF_t)$-stopping time $\tau$ with
$\PP\{\tau < \infty\} = 1$,
\begin{equation*}
    \PP\{X_{\tau + t} \in B \,|\, \fF_\tau\} = P_t(X_\tau, B)
    \quad \PP\text{-a.s.}
    \quad \text{for all $t \in \TT$ and all $B \in \bB$}.
\end{equation*}
Essentially all standard constructions of Markov processes have the strong
Markov property, including all discrete time processes and all c\`adl\`ag paths
under the standard constructions.

Given a transition probability function $(P_t)_{t \in \TT}$, we say that a Markov process
$(X_t)_{t \in \TT}$ is \navy{$(P_t)_{t \in \TT}$-Markov} when 
\begin{equation*}
    \PP\{X_t \in B \,|\, \fF_s\} = P_{t -s}(X_s, B)
    \quad \text{for all nonnegative $s \leq t$ and all $B \in \bB$}.
\end{equation*}
For the case of discrete time, when $P_t$ is just the $t$-composition $P^t$ for
some fixed stochastic kernel $P$, we will simply say that $(X_t)$ is
\navy{$P$-Markov}.  In other words, $(X_t)_{t \in \ZZ_+}$ is $P$-Markov when
\begin{equation*}
    \PP\{X_{t+1} \in B \,|\, \fF_t\} = P(X_t, B)
    \quad \text{for all $t \in \ZZ_+$ and all $B \in \bB$}.
\end{equation*}
Given $x \in S$, we write $\PP_x$ for the conditional probability $\PP\{ \cdot \,|\, X_0 = x\}$.

\section{Monotone Mixing}\label{s:mmc}

Theorem~2 of \cite{hopenhayn1992stochastic} shows that, in discrete time,
the monotone mixing condition (MMC) is sufficient for global stability of
monotone Markov chains. In this section we extend their result by
(a) establishing an explicit exponential rate of convergence to the
stationary distribution, (b) proving ergodicity for bounded increasing
observables, and (c) generalizing from discrete to continuous time.

We state the main result and its proof in \cref{ss:mmctheory}, and then
apply it to a continuous-time model of wage dynamics in \cref{ss:awd}.

\subsection{Theory}\label{ss:mmctheory}

As before $(S, \leq)$ is a partially ordered Polish space and $\precsd$ is
stochastic dominance. Following \cite{hopenhayn1992stochastic}, we suppose that
$S$ is compact and, in addition, has a greatest element $b$ and a least element
$a$. Note that, under these conditions, \cref{a:pos} is satisfied. Given $x \in
S$, we write $U_x$ for all $y \in S$ with $x \leq y$ and $D_x$ for all $y \in S$
with $y \leq x$.

In line with \cite{hopenhayn1992stochastic}, we say that a
stochastic kernel $P$ on $S$ satisfies the \navy{monotone mixing condition}
(MMC) if there exists an $\hat{x} \in S$ and an $\epsilon > 0$ such that
\begin{equation}
    P(a, U_{\hat{x}}) \geq \epsilon
    \quad \text{and} \quad
    P(b, D_{\hat{x}}) \geq \epsilon.
\end{equation}
In other words, a chain starting at $a$ rises above $\hat{x}$ with positive
probability, while a chain starting from $b$ falls below $\hat{x}$ with positive
probability.

We now present the main result of this section, which extends Theorem~2 of
\cite{hopenhayn1992stochastic}. In the statement,  $S$ has the properties listed at the start of
Section~\ref{ss:mmctheory}.

\vspace{0.4em}

\begin{theorem}\label{t:hp}
    Let $(P_t)_{t \in \TT}$ be a transition probability function on $S$.
    If $(P_t)_{t \in
    \TT}$ is increasing and there exists a positive $u \in \TT$ such that $P_u$
    satisfies the MMC, then $(P_t)_{t \in \TT}$ is globally stable on $\pP(S)$
    under $\beta$ and convergence to the unique fixed point $\phi^*$ is
    exponential: 
    \begin{equation}\label{eq:exp_conv}
        \beta(\phi P_t, \phi^*) \leq C e^{-\alpha t}
        \quad \text{for all $\phi \in \pP(S)$ and $t \in \TT$},
    \end{equation}
    where $C \coloneq 2/(1-\epsilon)$ and $\alpha \coloneq \ln(1/(1-\epsilon))/u$.
    Moreover, when $\TT = \ZZ_+$, each $P$-Markov process is monotone ergodic:
    \begin{equation*}
        \PP
        \left\{
             \lim_{n \to \infty}
             \frac{1}{n} \sum_{t=0}^{n-1} h(X_t) = \phi^*(h)
        \right\} = 1
        \quad \text{for all } h \in ibS.
    \end{equation*}
\end{theorem}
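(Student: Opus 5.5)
The plan is to exploit the fact that on a compact $S$ with least element $a$ and greatest element $b$, every trajectory is sandwiched between the two extremal trajectories $t \mapsto \delta_a P_t$ and $t \mapsto \delta_b P_t$. Since $\delta_a \precsd \phi \precsd \delta_b$ for every $\phi \in \pP(S)$ and $(P_t)$ is increasing, we get $\delta_a P_t \precsd \phi P_t \precsd \delta_b P_t$ for all $t$. By the diagonal property (\cref{l:mdi}), $\beta(\phi P_t, \psi P_t) \leq \beta(\delta_a P_t, \delta_b P_t)$. So everything reduces to bounding the single scalar quantity
\begin{equation*}
    D_t \coloneq \beta(\delta_a P_t, \delta_b P_t) = \sup_{h \in ibS,\, |h| \leq 1} \bigl( P_t h(b) - P_t h(a) \bigr).
\end{equation*}
Order boundedness of every trajectory is automatic here, since $S$ itself is the order interval $[a,b]$.

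The key step is a one-step contraction $D_{t+u} \leq (1-\epsilon) D_t$, obtained via a coupling of $P_u(a,\cdot)$ and $P_u(b,\cdot)$. Fix $h \in ibS$ with $|h| \leq 1$ and set $g \coloneq P_t h$, which is increasing with $g(b) - g(a) \leq D_t$. Then $P_{t+u}h(b) - P_{t+u}h(a) = \int g \, \diff P_u(b,\cdot) - \int g \, \diff P_u(a,\cdot)$ by Chapman--Kolmogorov. Using the MMC, split $P_u(a,\cdot) = \epsilon\mu_a + (1-\epsilon)\nu_a$ with $\mu_a$ supported on $U_{\hat x}$, which is possible since $P_u(a, U_{\hat x}) \geq \epsilon$. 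Similarly split $P_u(b,\cdot) = \epsilon \mu_b + (1-\epsilon)\nu_b$ with $\mu_b$ supported on $D_{\hat x}$. Then $\mu_b(g) \leq g(\hat x) \leq \mu_a(g)$, so the $\epsilon$ parts contribute at most $0$. The remaining parts contribute at most $(1-\epsilon)\bigl(\nu_b(g) - \nu_a(g)\bigr) \leq (1-\epsilon)(g(b) - g(a)) \leq (1-\epsilon) D_t$, because $\nu$'s are probability measures on $[a,b]$ and $g$ is increasing. Taking the sup over $h$ gives the claim.

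Iterating from $D_s \leq 2$ for $s \in [0,u)$ and using monotonicity of $D_t$ in $t$ (nonexpansiveness, \cref{lem:nonexp}, gives $D_{t+s} \leq D_t$) yields $D_t \leq 2(1-\epsilon)^{\lfloor t/u \rfloor} \leq \frac{2}{1-\epsilon} e^{-\alpha t}$. This gives asymptotic contractivity, so \cref{t:act} (or \cref{t:bk}) delivers a unique stationary $\phi^*$ and global stability. The rate follows from $\beta(\phi P_t, \phi^*) = \beta(\phi P_t, \phi^* P_t) \leq D_t$.

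For ergodicity when $\TT = \ZZ_+$, I would argue via a standard sandwich and strong law. Fix $h \in ibS$. Geometric decay of $\sup_x |P^t h(x) - \phi^*(h)| \leq D_t \|h\|$ follows since $\delta_a P^t h \leq \delta_x P^t h \leq \delta_b P^t h$ and $\phi^*(h)$ lies between these. Hence the Poisson equation $g = \sum_t (P^t h - \phi^*(h))$ has a bounded solution with $g - Pg = h - \phi^*(h)$. Then $\frac1n\sum_{t=0}^{n-1}(h(X_t)-\phi^*(h)) = \frac1n\sum_{t=0}^{n-1}\bigl(g(X_{t+1}) - Pg(X_t)\bigr) + \frac1n\bigl(g(X_0)-g(X_n)\bigr)$. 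The first term is an average of bounded martingale differences, so it tends to zero a.s. by the martingale SLLN; the second term vanishes trivially. I expect the main obstacle to be the measure-splitting step: it must be stated carefully so that the residual measures $\nu_a,\nu_b$ are genuine probability measures, and I would handle this by normalizing $P_u(a,\cdot)|_{U_{\hat x}}$ to mass exactly $\epsilon$ by scaling.
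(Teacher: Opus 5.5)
Your proposal is correct. For global stability and the exponential rate it is the paper's proof in slightly different packaging. The paper also uses the one-step inequality $P_u f(b) - P_u f(a) \le (1-\epsilon)\bigl(f(b) - f(a)\bigr)$ for increasing $f$, which comes from the dominance bounds in \cref{lem:bounds} and is iterated in \cref{lem:contraction}. It then bounds $|\phi(P^n h) - \psi(P^n h)|$ by $P^n h(b) - P^n h(a)$, which is the functional form of your diagonal-property sandwich, and concludes with \cref{t:act}. Your splitting $P_u(a,\cdot) = \epsilon\mu_a + (1-\epsilon)\nu_a$ is simply a direct proof of \cref{lem:bounds}, which the paper delegates to Hopenhayn and Prescott. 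Iterating from the trivial bound $D_r \le 2$ also makes the paper's appeal to nonexpansiveness for the remainder $P_r$ unnecessary.

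The genuinely different part is ergodicity. The paper just cites Theorem~3.1 and Proposition~4.1 of Kamihigashi and Stachurski (2016). You instead construct a bounded solution of the Poisson equation $g - Pg = h - \phi^*(h)$ from the uniform bound $\sup_x |P^t h(x) - \phi^*(h)| \le 2\|h\|_\infty (1-\epsilon)^{\lfloor t/u \rfloor}$, and finish with the strong law for bounded martingale differences. Your route is self-contained and elementary, but it relies on a uniform, summable rate that is special to the compact MMC setting. The cited result needs no rate, which is why the paper can reuse it after \cref{t:orgs} on noncompact spaces. There convergence is only pointwise, and the series defining $g$ need not converge.
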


We note that \cref{a:pos} holds in the setting of \cref{t:hp}.
The proof can be found in \cref{ss:proofhp}.

\subsection{Application: wage dynamics}\label{ss:awd}

We develop a continuous-time model of wage dynamics based on a job ladder
framework with job destruction, extending the class of models studied by
\cite{burdett1998wage}, \cite{moscarini2013stochastic}, and
\cite{coles2016equilibrium} by allowing wage offers to be state-dependent.

Throughout, $S \coloneq [0, \bar{w}]$ denotes the
state space for wages, with $\bar{w} \in (0,\infty)$. We work with two stochastic
kernels on $S$.  The first, denoted $Q_u$, governs wage draws upon job destruction.
The second, $Q_e$, governs outside offers while employed.
We assume throughout that both $Q_u$ and $Q_e$ are increasing, so higher current
wages predict higher offers for both employed and unemployed workers.

Let $\delta > 0$ be the job destruction rate and $\lambda > 0$ be the offer
arrival rate while employed. Let $N^d_t$ and $N^e_t$ be independent Poisson
processes with rates $\delta$ and $\lambda$ respectively.
The wage process $(W_t)_{t \geq 0}$ evolves as follows:
\begin{itemize}[nosep]
    \item At jump times of $N^d$, we draw $W_t$ from $Q_u(W_{t-}, \cdot)$
    \item At jump times of $N^e$, we draw $W^e_t \sim Q_e(W_{t-}, \cdot)$ and
        then set $W_t = \max(W_{t-}, W^e_t)$.
\end{itemize}
Between jumps, $W_t$ remains constant.  The max after job arrival means that an
employed worker can accept a new offer or retain their current position.

The process $(W_t)_{t \geq 0}$ is a continuous-time pure jump process. Its
dynamics can be characterized through the infinitesimal generator
$\mathcal{A}$, which describes the instantaneous expected rate of change of
$h(W_t)$ for test functions $h \in bS$. (In discrete time, the analogous
object is the one-step conditional expectation operator $Ph - h$.) In this
setting, $\mathcal{A}$ acts on $h \in bS$ via
\begin{equation*}
    (\mathcal{A}h)(w)
    = \delta \int
        \left[ h(w') - h(w) \right] Q_u(w, \diff w')
        + \lambda \int \left[ h(w \vee w') - h(w) \right] Q_e(w, \diff w').
\end{equation*}
Define the discrete-time \navy{max-chain} $(M_t)_{t \in \ZZ_+}$ starting from $w$ by
$M_0 = w$ and
\begin{equation*}
    M_{t+1} = \max\{M_t, W_{t+1}\}
    \quad \text{where} \quad
    (W_t)_{t \in \ZZ_+} \text{ is $Q_e$-Markov with $W_0 = w$}.
\end{equation*}
Let $Q_{m}$ be the kernel associated with the max-chain.
We can now rewrite $\mathcal{A}$ more compactly as 
$\mathcal{A} = \delta (Q_u - I) + \lambda (Q_{m} - I)$.
Some rearranging gives
\begin{equation*}
    \mathcal{A} = (\delta + \lambda)(K - I)
    \quad \text{where} \quad
    K = \frac{\delta}{\delta + \lambda} Q_u + \frac{\lambda}{\delta + \lambda}
    Q_{m}.
\end{equation*}
This is the infinitesimal generator of a jump chain with jump kernel $K$, so the 
Markov semigroup $(P_t)_{t \geq 0}$ on $[0, \bar w]$ admits the representation
\begin{equation}\label{eq:ptfjk}
    (P_t f)(w) 
    = \sum_{n=0}^{\infty} e^{-(\delta + \lambda)t} 
    \frac{[(\delta + \lambda)t]^n}{n!} (K^n f)(w).
\end{equation}
(See, e.g., \cite{siegrist2022}, Section~16.20.)  

To generate monotone mixing, we impose the following restrictions. 

\begin{assumption}\label{a:quqe}
    There exists a $\hat{w} \in [0, \bar{w}]$, an $n \in \NN$ and an $\epsilon >
    0$ such that
    \begin{equation}
        Q_u^n(\bar{w}, D_{\hat{w}}) \geq \epsilon
        \quad \text{and} \quad
        Q_e^n(0, U_{\hat{w}}) \geq \epsilon.
    \end{equation}
\end{assumption}

The assumption requires the existence of an
intermediate wage level $\hat{w}$ such that unemployed agents can reach this
level and employed agents can fall below this level.
Using \cref{a:quqe} and \cref{t:hp}, we can obtain global stability:

\begin{proposition}\label{p:wagestable}
    If \cref{a:quqe} holds, then the wage process is globally stable, with
    unique stationary distribution $\phi^*$.  Moreover,
    \begin{equation}\label{eq:wage_exp}
        \beta(\phi P_t, \phi^*) \leq C e^{-\alpha t}
        \quad \text{for all } \phi \in \pP(S) \text{ and } t \geq 0,
    \end{equation}
    where 
    \begin{equation}\label{eq:kappa_def}
        C \coloneq 2/(1-\kappa), 
        \quad
        \alpha \coloneq \ln(1/(1-\kappa)),
        \quad \text{and} \quad
        \kappa \coloneq \frac{e^{-(\delta + \lambda)} (\delta \wedge \lambda)^n}{n!} \epsilon.
    \end{equation}
\end{proposition}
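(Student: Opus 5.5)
The plan is to obtain \cref{p:wagestable} as a direct application of \cref{t:hp} with $\TT = \RR_+$ and $u = 1$. If $P_1$ satisfies the MMC with constant $\kappa$ and some $\hat x$, then \cref{t:hp} gives $C = 2/(1-\kappa)$ and $\alpha = \ln(1/(1-\kappa))/1$, which is exactly \eqref{eq:kappa_def}. Two things are needed: that $(P_t)$ is increasing, and the two mixing inequalities for $P_1$ with $\hat x = \hat w$. The state space $[0,\bar w]$ is compact with least element $0$ and greatest element $\bar w$, so the setting of \cref{ss:mmctheory} applies.

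\textbf{Step 1 (monotonicity).} I first show $Q_m$ is increasing, reading $Q_m$ as the one-step kernel $w \mapsto \text{law of } w \vee W'$ with $W' \sim Q_e(w,\cdot)$, which is what appears in $\mathcal A$. For $h \in ibS$,
\[
(Q_m h)(w) = \int h(w \vee w')\, Q_e(w, \diff w').
\]
If $w \le v$, then $h(w \vee w') \le h(v \vee w')$ pointwise. Moreover $w' \mapsto h(v \vee w')$ is increasing and bounded, and $Q_e(w,\cdot) \precsd Q_e(v,\cdot)$, so $(Q_m h)(w) \le (Q_m h)(v)$. Hence $K$, a convex combination of the increasing kernels $Q_u$ and $Q_m$, is increasing, and so is each $K^n$. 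By \eqref{eq:ptfjk}, $P_t h$ is a nonnegative mixture of the functions $K^n h$, so it is increasing whenever $h \in ibS$. Thus $(P_t)$ is increasing.

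\textbf{Step 2 (MMC for $P_1$).} All terms in \eqref{eq:ptfjk} are nonnegative, so keeping only the $n$-th term gives
\[
P_1(w, B) \ge e^{-(\delta+\lambda)} \frac{(\delta+\lambda)^n}{n!} K^n(w, B).
\]
Expanding $K^n$ as a sum over words in $Q_u$ and $Q_m$ and keeping only the pure words gives
\[
K^n(\bar w, \cdot) \ge \Bigl(\tfrac{\delta}{\delta+\lambda}\Bigr)^n Q_u^n(\bar w, \cdot)
\quad\text{and}\quad
K^n(0, \cdot) \ge \Bigl(\tfrac{\lambda}{\delta+\lambda}\Bigr)^n Q_m^n(0, \cdot).
\]
Applying the first bound with $B = D_{\hat w}$ and using \cref{a:quqe} yields
\[
P_1(\bar w, D_{\hat w}) \ge e^{-(\delta+\lambda)}\, \delta^n \epsilon / n! \ge \kappa .
\]
For the upper set, it remains to show $Q_m^n(0, U_{\hat w}) \ge Q_e^n(0, U_{\hat w}) \ge \epsilon$. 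After that, the same computation with $\lambda^n \ge (\delta\wedge\lambda)^n$ gives $P_1(0, U_{\hat w}) \ge \kappa$.

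\textbf{Main obstacle.} The key point is the comparison $Q_m^n(0,\cdot) \succeq_{\mathrm{sd}} Q_e^n(0,\cdot)$. I will prove $\mu Q_m \succeq_{\mathrm{sd}} \mu Q_e$ for every $\mu$: since $h(w \vee w') \ge h(w')$ for increasing $h$, we have $Q_m h \ge Q_e h$ pointwise. Then I induct on $n$:
\[
Q_m^{n+1} h = Q_m^n(Q_m h) \ge Q_m^n(Q_e h) \ge Q_e^n(Q_e h).
\]
The first inequality holds because $Q_m^n$ is a positive operator. The second holds by the induction hypothesis applied to the increasing function $Q_e h$, which is increasing since $Q_e$ is an increasing kernel. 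Taking $h = \mathbbm{1}_{U_{\hat w}}$, which is increasing because $U_{\hat w}$ is an upper set, completes the argument. \cref{t:hp} then yields global stability and \eqref{eq:wage_exp}.
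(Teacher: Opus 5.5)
Your proof is correct and has the same skeleton as the paper's. Both arguments establish monotonicity of $(P_t)$ through $K$ and the Poisson representation \eqref{eq:ptfjk}. Both obtain the MMC for $P_1$ by isolating exactly $n$ jumps of a single type: the paper phrases this probabilistically, while you drop terms of \eqref{eq:ptfjk} and of the word expansion of $K^n$. Both then use the comparison $Q_m^n(0, U_{\hat w}) \geq Q_e^n(0, U_{\hat w})$ and finish with \cref{t:hp} at $u = 1$.

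The one genuine difference is in that comparison step. The paper couples a $Q_e$-chain with its running maximum. That is not literally the chain generated by the one-step kernel $Q_m$ appearing in $\mathcal{A}$, where each new offer is drawn from $Q_e$ at the current wage rather than at the previous offer. Your induction $Q_m^{n+1} h \geq Q_m^n(Q_e h) \geq Q_e^{n+1} h$ uses monotonicity of $Q_e$ to handle exactly this point. You also prove that $Q_m$ is increasing, which the paper only asserts. Your version of this step is therefore the more careful one.
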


\begin{proof}
    First observe that $(P_t)$ is
    order-preserving. To see this, note that $K$ is a convex combination of the increasing
    kernels $Q_u$ and $Q_m$ with state-independent weights, hence increasing.
    Compositions of increasing kernels are increasing, so $K^n$ is increasing
    for all $n$. Finally, \eqref{eq:ptfjk} tells us that $P_t$ is a mixture of $(K^n)_{n \geq 0}$ with
    state-independent Poisson weights, hence increasing.

    Next we show that $P_t$ satisfies the MMC for some $t > 0$. Let $\hat{w}$,
    $n$, and $\epsilon$ be as in \cref{a:quqe}. We first claim that $Q_{m}^n(0,
    U_{\hat{w}}) \geq Q_e^n(0, U_{\hat{w}})$. To see this, generate
    $(W^e_k)_{k \in \ZZ_+}$ from $Q_e$ starting at $W^e_0 = 0$, and let $(M_k)_{k
    \in \ZZ_+}$ be the max-chain constructed from $(W^e_k)$, so that $M_k = \max_{j
    \leq k} W^e_j$. By construction, $M_n \geq W^e_n$, so $W^e_n \geq \hat{w}$
    implies $M_n \geq \hat{w}$. Hence $Q_{m}^n(0, U_{\hat{w}}) \geq Q_e^n(0,
    U_{\hat{w}})$.

    Let $n$ and $\epsilon$ be as in \cref{a:quqe}. The semigroup $P_1$ assigns
    positive probability to the event of $n$ jumps by time $1$, with the type of
    each jump (destruction or offer) determined independently. Starting from
    $\bar{w}$, the event of $n$ consecutive destructions has probability
    $e^{-(\delta+\lambda)} \delta^n / n!$, and conditional on this event, the
    wage follows the $Q_u$-chain for $n$ steps, falling below $\hat{w}$ with
    probability at least $Q_u^n(\bar{w}, D_{\hat{w}}) \geq \epsilon$.
    Similarly, starting from $0$, the event of $n$ consecutive offers has
    probability $e^{-(\delta+\lambda)} \lambda^n / n!$, and conditional on this
    event, the wage follows the max-chain for $n$ steps, rising above $\hat{w}$
    with probability at least $Q_{m}^n(0, U_{\hat{w}}) \geq Q_{e}^n(0,
    U_{\hat{w}}) \geq \epsilon$. Combining these bounds,
    \begin{equation*}
        P_1(\bar{w}, D_{\hat{w}})
        \geq \frac{e^{-(\delta + \lambda)} \delta^n}{n!} \epsilon
        \quad \text{and} \quad
        P_1(0, U_{\hat{w}})
        \geq \frac{e^{-(\delta + \lambda)} \lambda^n}{n!} \epsilon.
    \end{equation*}
    Both bounds are strictly positive, so $P_1$ satisfies the MMC with parameter
    $\kappa$ as defined in \eqref{eq:kappa_def}. Global stability and
    the exponential bound \eqref{eq:wage_exp} now follow from \cref{t:hp}.
\end{proof}

\cref{fig:wage_pdmp} illustrates a sample path along with the empirical
stationary distribution for a wage process evolving on $S = [0, 1]$, with
$\delta = 0.1$ and $\lambda = 0.5$. Upon job destruction with current wage $W$, the new wage is $W'
= W \cdot B_u$ where $B_u \sim \mathrm{Beta}(2, 8)$, so the worker retains a
random fraction of their previous wage, skewed towards large losses.  Upon
receiving an outside offer, the offer wage is $W' = 0.5 + 0.5\, W \cdot B_e$
where $B_e \sim \mathrm{Beta}(8, 2)$, so offers have a floor of $0.5$ and a
state-dependent component skewed towards high values (mean $0.8$). In this
setting, both kernels $Q_u$ and $Q_e$ are increasing: if $w \leq w'$, then $w
B_u \leq w' B_u$ and $0.5 + 0.5\, w\, B_e \leq 0.5 + 0.5\, w'\, B_e$ almost
surely.  In addition, \cref{a:quqe} holds with $\hat{w} = 0.5$ and $n = 1$, since
$Q_u(\bar{w}, D_{\hat{w}})
        = \PP\{B_u \leq 0.5\} \approx 0.98$ and 
        $Q_e(0, U_{\hat{w}}) = \PP\{0.5 \geq \hat{w}\} = 1$.
\cref{p:wagestable} then gives global stability with exponential
convergence.  

In the figure, job destructions are marked in red and accepted offers are marked
with green. The empirical stationary distribution is calculated from a simulated
path of length 200,000.  Convergence of the empirical distribution to the true
stationary distribution follows from the ergodicity result in \cref{t:hp}.

\begin{figure}[ht]
    \centering
    \includegraphics[width=\textwidth]{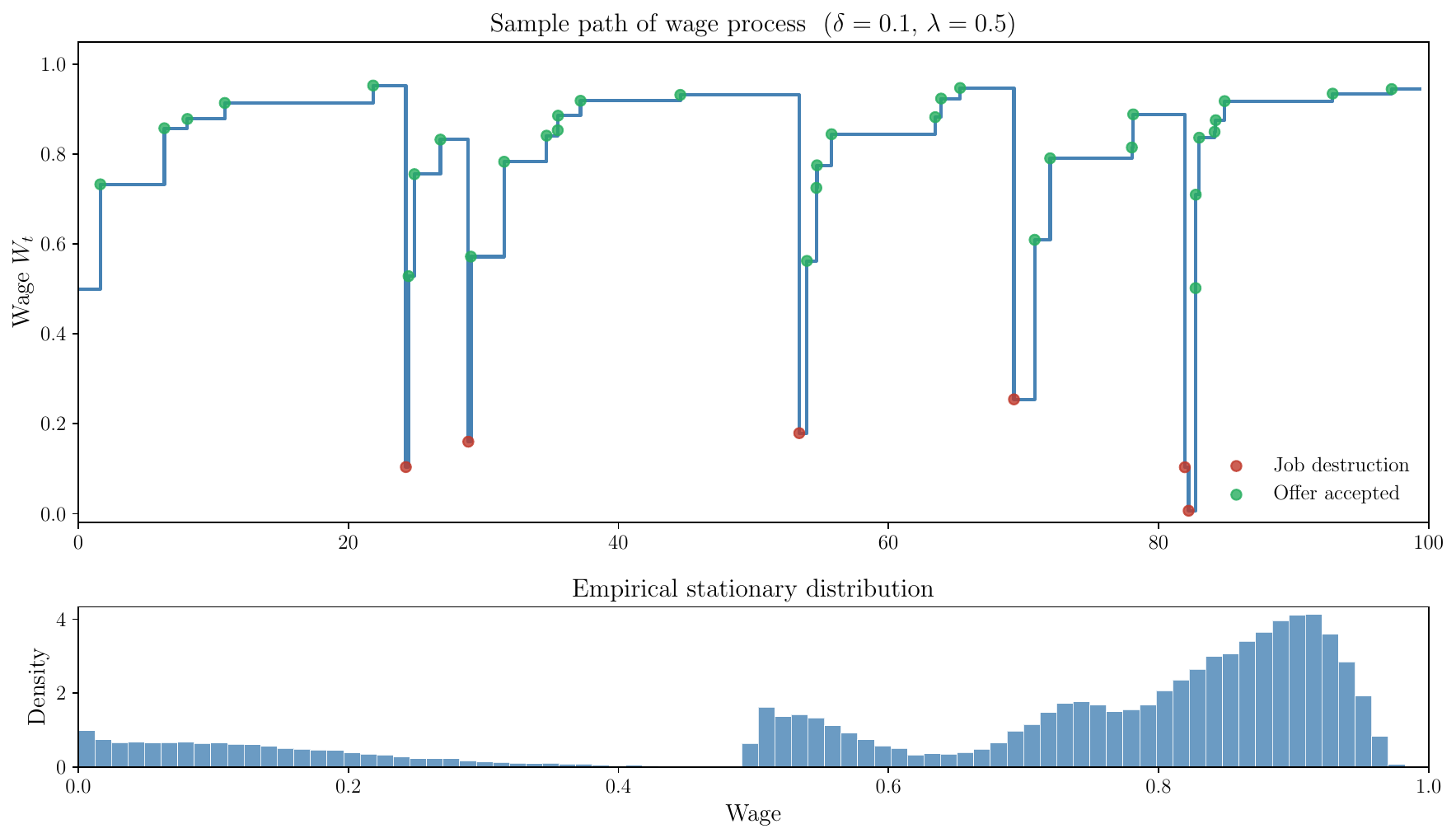}
    \caption{Simulation of the wage dynamics model in \cref{ss:awd}.}
    \label{fig:wage_pdmp}
\end{figure}

\section{Noncompact State Space, Discrete Time}\label{ss:ksdis}

Let's now drop the assumption that the state is compact, 
allowing us to include many more useful processes, and to 
more accurately model properties such as heavy tails in cross-sectional
distributions. Dropping compactness
requires us to adopt at least some restrictions on divergence, so that
probability mass does not vanish towards the ``edges'' of the state space.
Here these restrictions will take the form of tightness conditions,
which are, in turn, connected to order boundedness of trajectories via
\cref{p:tiob}.  We begin in this section by treating discrete time.  Later, in
Section~\ref{s:nssct}, we will cover continuous time in a similar environment.

\subsection{Discrete Time Theory}\label{ss:dtt}

As before, we take $(S, \leq)$ to be a partially ordered Polish space  and $\precsd$ to be stochastic dominance on $\pP(S)$.
The space $(S, \leq)$ is assumed to satisfy \cref{a:pos}. A
distribution $\phi$ is called \navy{excessive} for a stochastic kernel $P$ on
$S$ if $\phi P \precsd \phi$ and \navy{deficient} for $P$ if $\phi \precsd \phi P$.
As usual, $P$ is called \navy{Feller} if $P h \in cbS$ whenever $h \in cbS$.

Let $P$ be a stochastic kernel on $S$. As in \cite{kamihigashi2014stochastic},
we say that $P$ is \navy{order reversing} if there exist two independent
$P$-Markov processes $(X_t)_{t \in \ZZ_+}$ and $(X_t')_{t \in \ZZ_+}$ on a
common probability space such that,
        for every $x, x' \in S$ with $x' \leq x$, there exists a $t \in \ZZ_+$ with
        $\PP_{x,x'}\{X_t \leq X_t' \} > 0$.

Intuitively, order reversing means that no initial ordering is permanent:
regardless of how the two chains start, randomness can eventually reverse their
relative positions. This generalizes the MMC of
\cite{hopenhayn1992stochastic}, which requires mixing at the least and
greatest elements of the state space, to settings where such boundary elements
need not exist.

We can now state our main result for this section.

\begin{theorem}\label{t:orgs}
    Let $P$ be an increasing stochastic kernel on $S$.
    If $P$ is order reversing, then $P$ is globally stable if and only
            if $P$ is bounded in probability.
\end{theorem}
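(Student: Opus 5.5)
The plan is to deduce \cref{t:orgs} from \cref{c:act}, with $\pP(S)$ carrying $\precsd$ and $\beta$. Since $P$ is increasing, $\phi \mapsto \phi P$ is order-preserving, so global stability is equivalent to asymptotic contractivity together with one tight trajectory. The proof therefore has two parts: (a) global stability implies that $(\delta_x P^t)_{t}$ is tight for every $x$; (b) bounded in probability together with order reversing implies asymptotic contractivity. Under (b), tightness of a single trajectory comes for free from boundedness in probability, e.g.\ $(\delta_x P^t)_t$.

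\textbf{Part (a).} Fix $x$ and $\epsilon>0$. By \cref{a:pos}, $\phi^*$ is tight, so there is an interval $[a,b]$ with $\phi^*([a,b]^c)\le\epsilon$. Since $D_b$ is decreasing, $\mathbbm{1}_{D_b^c}$ is in $ibS$. Similarly, $U_a$ is increasing, so $-\mathbbm{1}_{U_a^c} = \mathbbm{1}_{U_a}-1$ is in $ibS$. Convergence $\beta(\delta_xP^t,\phi^*)\to 0$ then gives
\begin{equation*}
    \delta_xP^t(D_b^c)+\delta_xP^t(U_a^c)\le 2\epsilon + 2\beta(\delta_xP^t,\phi^*),
\end{equation*}
which is at most $3\epsilon$ for all $t\ge t_0$. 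The finitely many measures $\delta_x P^t$ with $t<t_0$ are each tight on the Polish space $S$. Enlarging $[a,b]$ to cover them, using \cref{l:cobeq}, gives tightness of the whole trajectory.

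\textbf{Part (b), reduction.} By \cref{lem:nonexp}, $t\mapsto\beta(\phi P^t,\psi P^t)$ is nonincreasing. Using \eqref{eq:dual},
\begin{equation*}
    \beta(\phi P^t,\psi P^t)\le \int\!\!\int \beta(\delta_y P^t,\delta_{y'}P^t)\,\phi(\diff y)\psi(\diff y').
\end{equation*}
The integrand is bounded by $2$, so dominated convergence reduces the problem to pairs of points. For arbitrary $y,y'$, the set $\{y,y'\}$ is compact, hence contained in some $[a,b]$ by \cref{l:cobeq}. Since $\delta_a P^t\precsd\delta_yP^t,\delta_{y'}P^t\precsd\delta_bP^t$, the diagonal property (\cref{l:mdi}) gives $\beta(\delta_yP^t,\delta_{y'}P^t)\le\beta(\delta_aP^t,\delta_bP^t)$. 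So it suffices to treat ordered pairs $x'\le x$.

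\textbf{Part (b), ordered pairs.} Let $(X_t),(X'_t)$ be the independent chains from $(x,x')$, and let $\tau$ be the first $t$ with $X_t\le X'_t$. For $h\in ibS$ with $|h|\le1$, the strong Markov property at $\tau$ and monotonicity give
\begin{equation*}
    P^th(x)-P^th(x')\le 2\,\PP\{\tau>t\},
\end{equation*}
because on $\{\tau\le t\}$ we have $P^{t-\tau}h(X_\tau)\le P^{t-\tau}h(X'_\tau)$. The reverse inequality $P^th(x')\le P^th(x)$ holds because $P$ is increasing. Hence $\beta\le 2\PP\{\tau>t\}$, and it remains to show $\tau<\infty$ a.s.

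\textbf{Main obstacle: uniform crossing.} I need a crossing probability that is uniform over a compact set. The order-interval trick should supply it. For $y,y'\in[a,b]$,
\begin{equation*}
    \PP_{y,y'}\{X_s\le X'_s\}=\int P^s(y,D_z)\,P^s(y',\diff z)\ge\int P^s(b,D_z)\,P^s(a,\diff z)=:p,
\end{equation*}
using two facts. First, $D_z$ is decreasing, so $P^s(y,D_z)\ge P^s(b,D_z)$. Second, $z\mapsto P^s(b,D_z)$ is increasing, so integrating it against the larger measure $P^s(y',\cdot)\succeq_{\mathrm{sd}} P^s(a,\cdot)$ does not decrease the integral. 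Order reversing applied to $(b,a)$ gives some $s$ with $p>0$.

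\textbf{Completing the hitting-time argument.} Boundedness in probability gives $[a,b]$ with $\PP\{(X_t,X'_t)\in[a,b]^2\}\ge 1-2\epsilon$ for all $t$. I will then argue by successive blocks of length $s$, applying the Markov property at times when the pair is in $[a,b]^2$. Each visit gives at least probability $p$ of crossing within $s$ steps, which should force $\PP\{\tau=\infty\}\le C\epsilon$ for arbitrary $\epsilon$, so $\tau<\infty$ a.s. This bookkeeping, namely turning ``in $[a,b]^2$ with probability $\ge 1-2\epsilon$ at each fixed time'' into infinitely many independent-enough trials, is where I expect the most care to be needed. I would first try a Borel--Cantelli-free bound: show that $\PP\{\tau>ks\}$ decreases by a factor $(1-p(1-2\epsilon))$ up to an additive error that vanishes as $\epsilon\to0$.
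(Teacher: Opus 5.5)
Your proposal is correct and, once the last step is written out, it gives a self-contained proof by a somewhat different route from the paper's.

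\textbf{Comparison with the paper.} For sufficiency, the paper cites Lemma~A.5 of \cite{kamihigashi2014stochastic} to get an independent coupling with $\PP_{x,x'}\{\tau<\infty\}=1$ for all $x,x'$. It then passes through \cref{p:orwomnew}, which bounds $|P^th(x)-P^th(x')|$ by the tails of the hitting times for both $(x,x')$ and $(x',x)$, and finishes with \cref{c:act}. For necessity, it says a $\beta$-convergent trajectory is tight by Prokhorov, which tacitly passes from $\beta$-convergence to weak convergence.

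You differ in three places:
\begin{itemize}
\item You prove necessity directly from $\beta$-convergence via the increasing sets $D_b^c$ and $U_a$, which avoids that tacit step.
\item You reduce to ordered pairs with the diagonal property, so one side of the bound comes free from monotonicity.
\item You prove the hitting-time lemma yourself via the corner bound $\PP_{y,y'}\{X_s\le X'_s\}\ge \PP_{b,a}\{X_s\le X'_s\}=p>0$ on $[a,b]^2$. This is a correct use of monotonicity plus a single application of order reversing.
\end{itemize}
The paper's route is shorter and shares machinery with continuous time. Yours shows exactly where boundedness in probability enters: it keeps the pair in a box on which the crossing probability is uniformly positive.

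\textbf{The bookkeeping step.} The step you flag does need care, because $[a,b]$, $s$ and $p$ all depend on $\epsilon$, and $p(\epsilon)$ may tend to $0$ as $\epsilon\downarrow 0$. Suppose you only obtain $q_{k+1}\le (1-p(1-2\epsilon))q_k+e(\epsilon)$ with $e(\epsilon)\to 0$. That gives $\lim_k q_k\le e(\epsilon)/(p(\epsilon)(1-2\epsilon))$, which need not vanish. Moreover, $\PP(B_k)\ge 1-2\epsilon$ does not yield $\PP\{\tau>ks,\,B_k\}\ge (1-2\epsilon)q_k$, so that multiplicative factor is not directly available anyway.

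Instead, set $q_k\coloneq\PP_{x,x'}\{\tau>ks\}$ and $B_k\coloneq\{(X_{ks},X'_{ks})\in[a,b]^2\}$. The Markov property of the product chain gives
\[
q_{k+1}\le q_k-p\,\PP\{\tau>ks,\,B_k\}\le q_k-p\,(q_k-2\epsilon),
\]
that is, $q_{k+1}-2\epsilon\le (1-p)(q_k-2\epsilon)$. Hence $\PP_{x,x'}\{\tau=\infty\}=\lim_k q_k\le 2\epsilon$ however small $p(\epsilon)>0$ is, and letting $\epsilon\downarrow 0$ closes the argument.

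\textbf{A minor technical point.} In your reduction, the integrand $\beta(\delta_yP^t,\delta_{y'}P^t)$ is a supremum over an uncountable family, so it is not obviously measurable. You can either use an outer integral, or dominate it by the measurable bound $2\max(\PP_{y,y'}\{\tau>t\},\PP_{y',y}\{\tau>t\})$, as in \cref{p:orwomnew}. That bound tends to $0$ pointwise because your crossing argument applies to any starting pair, not only ordered ones.
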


The key idea behind the proof of \cref{t:orgs} is that boundedness in
probability, combined with order reversing, provides a form of monotone mixing
that leads to asymptotic contractivity.  Crucially, the asymptotic contractivity
is with respect to a metric defined via monotone test functions (the
Bhattacharya metric), so a monotone form of mixing suffices---we do not need
anything as strong as irreducibility.  Global stability then follows from
\cref{t:act}, since boundedness in probability implies the existence of a tight
trajectory.

To place \cref{t:orgs} in context, it is helpful to compare it to Theorems~1 and
2 of \cite{kamihigashi2014stochastic}, where it is shown that 
an increasing and order reversing stochastic kernel $P$ on $S$ is globally
stable whenever
\begin{itemize}
    \item[(KS1)] $P$ is bounded in probability and has an excessive or a deficient
        distribution, or
    \item[(KS2)] $P$ is bounded in probability and Feller.
\end{itemize}

\cref{t:orgs} improves both (KS1) and (KS2).  In particular, it
shows that boundedness in probability is enough for global stability when $P$ is
increasing and order reversing. We do not need to separately check for the
existence of an excessive or deficient distribution, as in (KS1), nor do we
require the Feller condition, as in~(KS2).

\cref{t:orgs} implies an ergodicity result:
If $P$ is increasing, order reversing, and is bounded in probability,
then $P$ has a unique stationary distribution $\phi^*$ 
and
\begin{equation*}
    \PP
    \left\{
         \lim_{n \to \infty}
         \frac{1}{n} \sum_{t=0}^{n-1} h(X_t) = \phi^*(h)
    \right\} = 1
\end{equation*}
for all $h \in ibS$, and also for all $h \in cbS$.
This follows from \cref{t:orgs} and Proposition~4.1 in \cite{kamihigashi2016seeking}.

\subsection{Application: Learning with Belief Shocks}\label{ss:bshock}

In macroeconomic and financial modeling, belief shocks are used to generate
persistent macroeconomic effects, such as countercyclical uncertainty,
uncertainty spikes, and forecast biases in inflation data
(see, e.g., \cite{orlik2014understanding},
\cite{cogley2008drifting}, or \cite{suda2010beliefs}). These shocks often
represent exogenous events that degrade accumulated information, or 
replacement of incumbents by new arrivals.  In this section we use
\cref{t:orgs} to study stability and
equilibrium properties in a model of Bayesian learning subject to belief shocks.

In the model, an agent faces uncertainty about
a fixed hidden state $\theta \in \{\ell, h\}$.  Let
$\pi_t \in (0,1)$ represent the agent's belief that $\theta = h$ at time
$t$. In seeking global stability and ergodicity, the end points of the unit
interval are omitted from the state space because both are trivial stationary
points.  We seek to understand the evolution of beliefs on $(0,1)$, which
represents the dynamics of interest for the applications discussed above. Notice
that this precludes use of the methods pioneered by \cite{hopenhayn1992stochastic} and
extended in Section~\ref{s:mmc}, since the state has no least and greatest
element.

The state $\pi_t$ is primarily driven by Bayesian updating while observing
{\sc iid} signals $(Z_t)_{t \in \NN}$ drawn from an unknown distribution. If
$\theta = h$, then the sequence of signals is drawn from density $f_h$.  If
$\theta = \ell$, then the sequence is drawn from density $f_\ell$.
Let $L(z) \coloneq f_h(z)/f_\ell(z)$ denote the likelihood ratio.
Upon observing signal $Z_{t+1}$, beliefs are updated according to Bayes' rule: 
\begin{equation}
    \pi_{t+1} 
     = \frac{\pi_t f_h(Z_{t+1})}{\pi_t f_h(Z_{t+1}) + (1-\pi_t)
     f_\ell(Z_{t+1})}.
\end{equation}

Rather than studying $\pi_t$ directly, we work with 
the log-odds transformation
$\eta_t := \ln(\pi_t/(1-\pi_t))$.
Simple algebra shows that, under this transformation, the dynamics for $\pi_t$ become
\begin{equation}
    \eta_{t+1} = \eta_t + \xi_{t+1},
    \quad \text{where} \quad
    \xi_{t+1} \coloneq \ln L(Z_{t+1}).
\end{equation}
The state space for $(\eta_t)_{t \in \ZZ_+}$ is $\RR$.

We introduce occasional belief shocks that reset the agent's posterior according
to a stochastic kernel $Q$ on $\RR$. Under these shocks,
the belief process $(\eta_t)_{t \in \ZZ_+}$ evolves on $\mathbb{R}$ according to
\begin{equation}\label{eq:bshock}
    \eta_{t+1} 
    = I_{t+1} \cdot R_{t+1} + (1 - I_{t+1}) \cdot (\eta_t + \xi_{t+1}),
\end{equation}
where $(I_t)_{t \in \NN}$ is the belief reset indicator process and
$R_{t+1} \sim Q(\eta_t, \cdot)$ is the reset value.  The stochastic kernel $Q$
is assumed to be increasing. Intuitively, this means that beliefs have some positive
correlation over the reset.  It includes the special case where reset values
are {\sc iid}. The reset indicator process is {\sc iid} and each $I_t$ is drawn
from $\mathrm{Bernoulli}(\rho)$. 

In what follows, we specialize to $\theta = h$, so that each $Z_t$ is drawn
from $f_h$. (Analysis under $\theta = \ell$ is very similar.)
Let $P$ denote the stochastic kernel corresponding to \eqref{eq:bshock}
under this specialization.

\begin{lemma}\label{l:bshock-inc}
    The stochastic kernel $P$ for the belief process is increasing.
\end{lemma}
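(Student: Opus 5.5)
I would verify the first characterization of increasing kernels given in \cref{s:pmm}: $Ph \in ibS$ for every $h \in ibS$. Writing out \eqref{eq:bshock} as an integral, and using the independence of $I_{t+1}$, $R_{t+1}$ and $\xi_{t+1}$ given $\eta_t$, the right Markov operator is
\begin{equation*}
    (Ph)(x) = \rho \int h(y)\, Q(x, \diff y) + (1-\rho)\, \EE\, h(x + \xi),
    \qquad (h \in bS, \; x \in \RR),
\end{equation*}
where $\xi = \ln L(Z)$ and $Z \sim f_h$. The first step is to justify this formula by conditioning on the value of the Bernoulli indicator. Since $I_{t+1}$ is independent of everything else and takes the value $1$ with probability $\rho$, the conditional law of $\eta_{t+1}$ given $\eta_t = x$ is the mixture $\rho\, Q(x,\cdot) + (1-\rho)\,\mu_x$. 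Here $\mu_x$ is the law of $x + \xi$.

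Now fix $h \in ibS$. The first term, $x \mapsto (Qh)(x)$, is increasing because $Q$ is assumed increasing. The second term, $x \mapsto \EE\, h(x+\xi)$, is increasing because $x \leq x'$ implies $h(x+\xi) \leq h(x'+\xi)$ pointwise, and expectation preserves pointwise inequalities. Boundedness is immediate, since $|Ph| \leq \sup|h|$. The coefficients $\rho$ and $1-\rho$ are nonnegative and do not depend on the state, so $Ph$ is a nonnegative combination of increasing bounded functions. Hence $Ph \in ibS$.

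The only point needing care is measurability and well-definedness of $\xi = \ln L(Z)$. The signal could take values where $f_\ell = 0$ or $f_h = 0$, so that $\xi$ is $\pm\infty$. Under $\theta = h$, the event $f_h(Z) = 0$ has probability zero. I would adopt the standing convention that $\xi$ is real-valued almost surely, which is implicit in taking $\RR$ as the state space; equivalently, I would assume $f_h$ and $f_\ell$ are mutually absolutely continuous. With that convention, $x \mapsto \EE\, h(x+\xi)$ is a measurable bounded function by Fubini's theorem, and the argument above goes through. I expect this to be a minor technicality rather than a real obstacle. An equivalent coupling argument (same $I$, $R$ via a monotone coupling of $Q$, and same $\xi$ for two starting points $x \leq x'$) would also deliver $P(x,\cdot) \precsd P(x',\cdot)$ directly, but the functional argument avoids constructing the coupling.
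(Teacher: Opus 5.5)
Your proof is correct, but it takes a different route from the paper. You work at the level of the right Markov operator. Because the reset probability $\rho$ does not depend on the state, $P(x,\cdot) = \rho\, Q(x,\cdot) + (1-\rho)\,\mu_x$, so $Ph = \rho\, Qh + (1-\rho)\,\EE\, h(\cdot + \xi)$. Monotonicity of $Q$ is then used only through its defining property $Qh \in ibS$.

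The paper instead proves the lemma by coupling, which is exactly the alternative you mention in your last sentence. It invokes Strassen's theorem to realize $R_1 \sim Q(\eta_1,\cdot)$ and $R_2 \sim Q(\eta_2,\cdot)$ with $R_1 \leq R_2$ almost surely. It uses the same indicator $I$ and the same increment $\xi$ for both starting points, checks $\eta_1' \leq \eta_2'$ separately on $\{I=0\}$ and $\{I=1\}$, and then integrates an increasing $h$.

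Each approach has its advantages. The coupling gives a pathwise ordering of the updated states, in the same style as the paper's other monotonicity arguments (the pure jump and PDMP models). Your version avoids Strassen's theorem entirely and is shorter. It also makes the measurability of $Ph$ explicit via Fubini, which the paper leaves implicit.

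One small quibble on your technical remark. For $\xi$ to be real-valued almost surely under $Z \sim f_h$, you only need $\PP\{f_\ell(Z) = 0\} = 0$. So mutual absolute continuity of $f_h$ and $f_\ell$ is sufficient rather than equivalent. This is automatic once (A1) is imposed, and the paper simply takes it for granted by placing $\eta_t$ in $\RR$.
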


\begin{proof}
    Take $\eta_1 \leq \eta_2$. Since $Q$ is increasing, $Q(\eta_1, \cdot)
    \precsd Q(\eta_2, \cdot)$.  By Strassen's theorem (see, e.g.,
    \cite{lindvall2002lectures}), there exists a pair of
    random variables $R_1$ and $R_2$ such that the first component $R_1$ is drawn from
    $Q(\eta_1, \cdot)$, the second component $R_2$ is drawn from $Q(\eta_2, \cdot)$, and 
    $R_1 \leq R_2$ almost surely. Let $I$ and $\xi$ be drawn from
    their respective distributions, so that
    \begin{equation*}
         \eta_i' \coloneq I \cdot R_i + (1 - I) \cdot (\eta_i + \xi)
         \; \text{ has distribution } \;
         P(\eta_i, \cdot) \text{ for } i = 1,2.
    \end{equation*}
    If $I = 0$, then $\eta'_1 =
    \eta_1 + \xi \leq \eta_2 + \xi = \eta'_2$. If $I = 1$, then $\eta'_1 = R_1$
    and $\eta'_2 = R_2$. In both cases, $\eta'_1 \leq \eta'_2$.  As a result,
    for any $h \in ibS$,
    \begin{equation*}
        (Ph)(\eta_1) = \EE h(\eta_1') 
        \leq \EE h(\eta_2') = (Ph)(\eta_2).
    \end{equation*}
    This proves that $P$ is increasing.
\end{proof}

We impose the following conditions:
\begin{enumerate}[label=(A\arabic*)]
    \item \label{a:var} The log-likelihood ratio has finite variance:
        $\sigma^2 = \mathrm{Var}(\ln L(Z)) < \infty$.
    \item \label{a:Qbip} For all $\varepsilon > 0$, there exists a compact $K \subset \RR$ such that
        $\inf_{x \in \RR} Q(x, K) \geq 1 - \varepsilon$.
    \item \label{a:unbdd} For every $M > 0$, we have $\PP\{L(Z) > M\} > 0$ when $Z \sim f_h$.
\end{enumerate}
Assumptions \ref{a:var} and \ref{a:unbdd} are naturally satisfied in common
settings. (For example, if $f_h$ and $f_\ell$ are Gaussian densities
with means $\mu_h > \mu_\ell$ and common variance $\sigma^2$, then $\ln L(z) =
(\mu_h - \mu_\ell)(z - \bar \mu)/\sigma^2$ where $\bar \mu = (\mu_h +
\mu_\ell)/2$. This has finite variance and is unbounded above since $z$ has full
support.) Assumption \ref{a:Qbip} is a tightness condition on the reset kernel and holds trivially when resets are {\sc iid}.

\begin{lemma}\label{l:bshock-bip}
    Under {\rm \ref{a:var}} and {\rm \ref{a:Qbip}}, $P$ is bounded in probability.
\end{lemma}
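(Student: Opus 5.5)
We have to show that, for each $x \in \RR$, the family $(P^t(x,\cdot))_{t \in \ZZ_+}$ is tight. We do this by writing $\eta_t$ explicitly in terms of the time since the last reset. Fix $\eta_0 = x$ and $t \geq 1$, and let $\tau_t \coloneq \max\{s \leq t : I_s = 1\}$, with $\tau_t = 0$ if no reset has occurred by $t$. On $\{\tau_t = s\}$ with $s \geq 1$, iterating \eqref{eq:bshock} gives
\begin{equation*}
    \eta_t = R_s + \sum_{j=s+1}^{t} \xi_j .
\end{equation*}
On $\{\tau_t = 0\}$ we have $\eta_t = x + \sum_{j=1}^t \xi_j$, and this event has probability $(1-\rho)^t$.

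Fix $\varepsilon > 0$. The argument uses three ingredients.

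\textbf{Last-reset age.} The age $t - \tau_t$ is stochastically bounded uniformly in $t$, because $\PP\{t - \tau_t \geq m\} \leq (1-\rho)^m$. We therefore choose $m$ with $(1-\rho)^m \leq \varepsilon/3$. Since $\rho > 0$ is implicit in the model, this choice also handles the no-reset event, which forces $t - \tau_t = t$.

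\textbf{Reset value.} By \ref{a:Qbip}, choose a compact $K = [-k,k]$ with $\inf_y Q(y,K) \geq 1 - \varepsilon/3$. Conditioning on $\eta_{s-1}$ gives $\PP\{R_s \in K\} \geq 1-\varepsilon/3$ for every $s$, regardless of the past.

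\textbf{Partial sums of the increments.} On $\{t - \tau_t = n\}$ with $n < m$, the tail sum $\sum_{j=s+1}^t \xi_j$ is a sum of at most $m$ i.i.d.\ copies of $\xi = \ln L(Z)$. These copies are independent of $R_s$ and of the indicators, since the $Z_j$, $I_j$ and the reset draws are independent. By \ref{a:var}, $\xi$ has finite mean $\mu$ and variance $\sigma^2$. Chebyshev's inequality then gives
\begin{equation*}
    \PP\Bigl\{\Bigl|\sum_{j=1}^n \xi_j\Bigr| > n|\mu| + c\Bigr\} \leq \frac{n\sigma^2}{c^2} \leq \frac{m\sigma^2}{c^2} .
\end{equation*}
We choose $c$ so that the right-hand side is at most $\varepsilon/3$.

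Finally, set $M \coloneq k + m|\mu| + c$. A union bound over the three bad events gives $\PP_x\{|\eta_t| > M\} \leq \varepsilon$ for all $t \geq m$. The finitely many indices $t < m$ each give a single probability measure, and a single measure on a Polish space is tight. We enlarge $M$ to cover these as well, so the whole family $(P^t(x,\cdot))_t$ lies mostly in $[-M,M]$ and is tight.

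The main care point is the conditioning. We must make sure that, on $\{\tau_t = s\}$, the reset value $R_s$ still lands in $K$ with probability at least $1-\varepsilon/3$, and that the post-reset $\xi_j$ remain i.i.d. Both facts hold because the event $\{\tau_t = s\}$ depends only on $I_s, \dots, I_t$, which are independent of $R_s$ and the $\xi_j$. The uniform bound in \ref{a:Qbip} over all previous states $\eta_{s-1}$ is exactly what makes the bound on $R_s$ hold regardless of the past.

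Under $\theta = h$ we have $\mu \geq 0$, since $\mu$ is the Kullback--Leibler divergence of $f_\ell$ from $f_h$. Finiteness of $\mu$ follows from \ref{a:var}, which is all the argument needs.
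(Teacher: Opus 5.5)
Your proof is correct and has the same skeleton as the paper's. Both locate the last reset, use \ref{a:Qbip} to place the reset value in a fixed compact set uniformly in the pre-reset state, control the post-reset increments by Chebyshev, and handle the finitely many early dates by tightness of single measures. The real difference is in how the post-reset sum $S_n$ is handled, and it works in your favor.

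The paper writes $\PP\{|S_n| > c\} \leq n\sigma^2/c^2$ and averages this over a geometric number of post-reset steps. That tacitly assumes $\EE \xi = 0$. Under $\theta = h$, however, $\mu = \EE \ln L(Z) = \int f_h \ln(f_h/f_\ell)$ is a Kullback--Leibler divergence, strictly positive unless $f_h = f_\ell$ almost everywhere. It is positive in the paper's own Gaussian example, and it is the upward drift visible in the simulation. So the paper's inequality is not valid as stated.

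Your truncation of the age at $m$ caps the accumulated drift at $m|\mu|$, and the centered threshold $n|\mu| + c$ makes the Chebyshev step legitimate. You also spell out why $\{\tau_t = s\}$ is independent of $R_s$ and of the later $\xi_j$, which the paper leaves implicit.

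The paper's averaging route could instead be repaired with the second-moment bound $\PP\{|S_n| > c\} \leq (n\sigma^2 + n^2\mu^2)/c^2$. This is increasing in $n$, so it can still be averaged against the dominating geometric law, which has a finite second moment.
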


\begin{proof}
    Fix $x \in \RR$ and $\varepsilon > 0$. By \ref{a:Qbip}, choose $K = [a, b]$ such that
    $Q(y, K) \geq 1 - \varepsilon/3$ for all $y$.
    Choose $T$ large enough that $(1-\rho)^T < \varepsilon/3$.
    For $t \geq T$, let $N_t$ denote the number of updates since the
    last reset prior to $t$. The probability that no reset has occurred by time
    $t$ is $(1 - \rho)^t \leq (1-\rho)^T < \varepsilon/3$.
    Conditional on a reset occurring, the post-reset
    state lies in $K$ with probability at least $1 - \varepsilon/3$. Conditional
    on $N_t = n$ and the last reset landing in $K$, we have $\eta_t \in [a +
    S_n, b + S_n]$ where $S_n = \xi_1 + \cdots + \xi_n$. The random variable
    $N_t$ is stochastically dominated by $N \sim \mathrm{Geometric}(\rho)$. By
    Chebyshev's inequality, $\PP\{|S_n| > c\} \leq n\sigma^2/c^2$. Choose $c$
    large enough that
    \begin{equation*}
        \sum_{n=0}^{\infty} \rho(1-\rho)^n \cdot \frac{n\sigma^2}{c^2}
        = \frac{\sigma^2(1-\rho)}{c^2 \rho} < \frac{\varepsilon}{3}.
    \end{equation*}
    Then $\PP_x\{\eta_t \notin [a-c, b+c]\} < \varepsilon$ for all $t \geq T$.
    For $t < T$, each $P^t(x, \cdot)$ is a single distribution and hence tight.
    A finite union of compact sets covers these, completing the proof.
\end{proof}

\begin{lemma}\label{l:bshock-or}
    Under {\rm \ref{a:unbdd}}, the kernel $P$ is order reversing.
\end{lemma}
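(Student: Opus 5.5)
To prove that $P$ is order reversing, I will take two independent $P$-Markov processes $(\eta_t)$ and $(\eta'_t)$, started at $x' \leq x$ and driven by independent reset indicators, independent reset values and independent signal sequences. It suffices to find, for each such pair, one time $t$ with $\PP_{x,x'}\{\eta_t \leq \eta'_t\} > 0$. I will use $t=1$ and look only at the event on which neither chain resets in the first period, that is, $I_1 = I'_1 = 0$. This event has probability $(1-\rho)^2 > 0$, assuming $\rho < 1$. If $\rho = 1$ the chains are pure resets; I would either note that the model implicitly takes $\rho \in (0,1)$ or treat that case separately, so I will assume $\rho<1$.

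On this event we have $\eta_1 = x + \xi_1$ and $\eta'_1 = x' + \xi'_1$, with $\xi_1$ and $\xi'_1$ independent copies of $\ln L(Z)$, $Z \sim f_h$. The goal is therefore
\begin{equation*}
    \PP\{\xi'_1 - \xi_1 \geq x - x'\} > 0 .
\end{equation*}
Set $c \coloneq x - x' \geq 0$. Since $\ln L(Z)$ is a real-valued random variable, there is some finite $m$ with $\PP\{\xi_1 \leq m\} > 0$. Assumption \ref{a:unbdd}, applied with $M = e^{m + c}$, gives $\PP\{\xi'_1 > m + c\} > 0$. By independence, the product of these two probabilities is positive. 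On the intersection of the two events, $\xi'_1 - \xi_1 > c$, so $\eta_1 < \eta'_1$. Multiplying by the no-reset probability $(1-\rho)^2$ then gives $\PP_{x,x'}\{\eta_1 \leq \eta'_1\} > 0$.

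There is no real obstacle in this argument. The only points needing care are the formal ones: the independent chains must be built on a common probability space from independent copies of the driving variables $(I_t, R_t, Z_t)$, and one must use that $\ln L(Z)$ is finite almost surely. This holds because $f_h(Z) > 0$ almost surely. We also need $f_\ell(Z) > 0$, or else allow $\xi = +\infty$, which only makes the reversal easier. With these points settled, the argument is complete.
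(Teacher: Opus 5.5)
Your proof is correct, but it uses a different positive-probability event from the paper's proof.

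The paper works on $\{I=1,\ I'=0,\ \xi' > R-\eta'\}$. On that event the upper chain resets to $R\sim Q(\eta,\cdot)$ while the lower chain takes a Bayesian step. The paper applies (A3) conditionally on each realization $R=r$ and gets positivity by integrating against $Q(\eta,\cdot)$, with the factor $\rho(1-\rho)>0$ in front.

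You stay on the no-reset event $\{I_1=I_1'=0\}$ and compare two independent random-walk increments. All you need is that $\xi_1$ is finite, so that $\PP\{\xi_1\le m\}>0$ for some $m$, plus a single application of (A3) to $\xi_1'$.

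Your route has two advantages:
\begin{itemize}
\item It is slightly more elementary, since it is a product of two positive probabilities rather than a conditional integral over the reset value.
\item It is slightly more general. It never uses $Q$ and needs only $\rho<1$, so it shows order reversal even with no belief shocks at all. The paper's argument uses both $\rho>0$ and $1-\rho>0$.
\end{itemize}

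The paper's route instead shows the reset mechanism as the source of mixing, which fits the narrative of the application. Nothing is lost for the lemma itself. The restriction $\rho\in(0,1)$ you flag is implicit in the paper anyway: \cref{l:bshock-bip} needs $\rho>0$, and the lemma can genuinely fail at $\rho=1$, for example with $Q(x,\cdot)=\delta_x$.

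Your side remark about $\xi=+\infty$ is unnecessary. Finiteness of $\xi$ is built into the model, since $P$ is a kernel on $\RR$, and (A1) forces it as well.
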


\begin{proof}
    Let $(\eta_t)$ and $(\eta'_t)$ be independent $P$-Markov processes
    with initial conditions $\eta > \eta'$.
    We will show that $\PP\{\eta_1 < \eta'_1\} > 0$.  To this end,
    let $(I, R, \xi)$ and $(I', R', \xi')$ be the independent shocks for the two processes,
    where $R \sim Q(\eta, \cdot)$ and $R' \sim Q(\eta', \cdot)$.
    Consider the event $A = \{I = 1,\, I' = 0,\, \xi' > R - \eta'\}$.
    On $A$, process~1 resets to $\eta_1 = R$ while process~2 updates to
    $\eta'_1 = \eta' + \xi' > R = \eta_1$, so $\eta_1 < \eta'_1$.
    Since the two processes are independent, $I$, $R$, and $\xi'$ are mutually
    independent.  We have $\PP\{I = 1\} = \rho > 0$, $\PP\{I' = 0\} = 1 - \rho > 0$,
    and, for every realization $R = r$,
    $\PP\{\xi' > r - \eta'\} = \PP\{L(Z) > e^{r-\eta'}\} > 0$ by \ref{a:unbdd}.
    Hence $\PP(A) > 0$, proving that $P$ is order reversing.
\end{proof}

Combining the preceding lemmas with \cref{t:orgs}, we obtain global stability.

\begin{proposition}\label{p:bshock-stable}
    Under {\rm \ref{a:var}--\ref{a:unbdd}}, the belief shock kernel $P$
    is globally stable on $(\pP(\RR), \beta)$.
\end{proposition}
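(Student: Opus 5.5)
The statement is a direct assembly of the three preceding lemmas with \cref{t:orgs}, so the plan is simply to verify that each hypothesis of that theorem holds and then read off the conclusion.

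First, \cref{l:bshock-inc} gives that $P$ is an increasing stochastic kernel on $S = \RR$. We also need that $\RR$ with its usual order and topology is a partially ordered Polish space satisfying \cref{a:pos}; this is the case $n=1$ of \cref{ex:pos}, since compact subsets of $\RR$ are exactly the closed bounded sets, and bounded is the same as order-bounded in $\RR$. Second, under \ref{a:unbdd}, \cref{l:bshock-or} shows that $P$ is order reversing. Here one checks that the lemma's argument actually delivers the definition. The definition asks that, for $x' \leq x$, some $t$ satisfies $\PP_{x,x'}\{X_t \leq X'_t\} > 0$. The lemma produces $t=1$ with strict reversal when $\eta > \eta'$, and the case $x = x'$ is immediate with $t = 0$. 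Third, under \ref{a:var} and \ref{a:Qbip}, \cref{l:bshock-bip} shows that $P$ is bounded in probability.

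With these in hand, \cref{t:orgs} applies: an increasing, order reversing kernel is globally stable if and only if it is bounded in probability. Since boundedness in probability holds, $P$ is globally stable on $(\pP(\RR), \beta)$. That is, there is a unique stationary $\phi^*$ and $\beta(\phi P^t, \phi^*) \to 0$ for every $\phi$.

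There is no genuine obstacle here. The only points needing care are the bookkeeping ones just noted: the $x = x'$ case of order reversal, and confirming \cref{a:pos} for $\RR$. If desired, one can append the ergodicity consequence noted after \cref{t:orgs}, which follows from \cite{kamihigashi2016seeking}.
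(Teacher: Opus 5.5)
Your proposal is correct and follows the paper's proof: confirm \cref{a:pos} for $\RR$, take increasingness, order reversal, and boundedness in probability from \cref{l:bshock-inc,l:bshock-or,l:bshock-bip}, and conclude with \cref{t:orgs}. Your extra check of the case $x = x'$ (take $t = 0$) is harmless bookkeeping that the paper leaves implicit.
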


\begin{proof}
    The state space $S = \RR$ satisfies \cref{a:pos} since order intervals
    $[a, b]$ are compact and compact sets are bounded, hence order-bounded.
    By \cref{l:bshock-bip}, $P$ is bounded in probability.
    By \cref{l:bshock-inc}, $P$ is increasing.
    By \cref{l:bshock-or}, $P$ is order reversing.
    Therefore, by \cref{t:orgs}, $P$ is globally stable.
\end{proof}

\cref{fig:belief_shocks} illustrates a simulation of the belief process in
$\pi$-space, with Gaussian signals ($\mu_h = 0.3$, $\mu_\ell = 0$, $\sigma
= 1$), reset probability $\rho = 0.04$, and {\sc iid} Gaussian resets with
mean zero and standard deviation $0.5$ in log-odds space. The time series
shows the characteristic pattern of gradual learning (upward drift, since
the true state is $\theta = h$) interrupted by belief resets that pull the
posterior back toward $\pi = 0.5$. The empirical stationary distribution,
computed from 200{,}000 periods, is right-skewed, reflecting the positive
drift from Bayesian updating under the true state. Convergence of the
empirical distribution to the true stationary distribution follows from the
ergodicity result implied by \cref{t:orgs}.

\begin{figure}[ht]
    \centering
    \includegraphics[width=\textwidth]{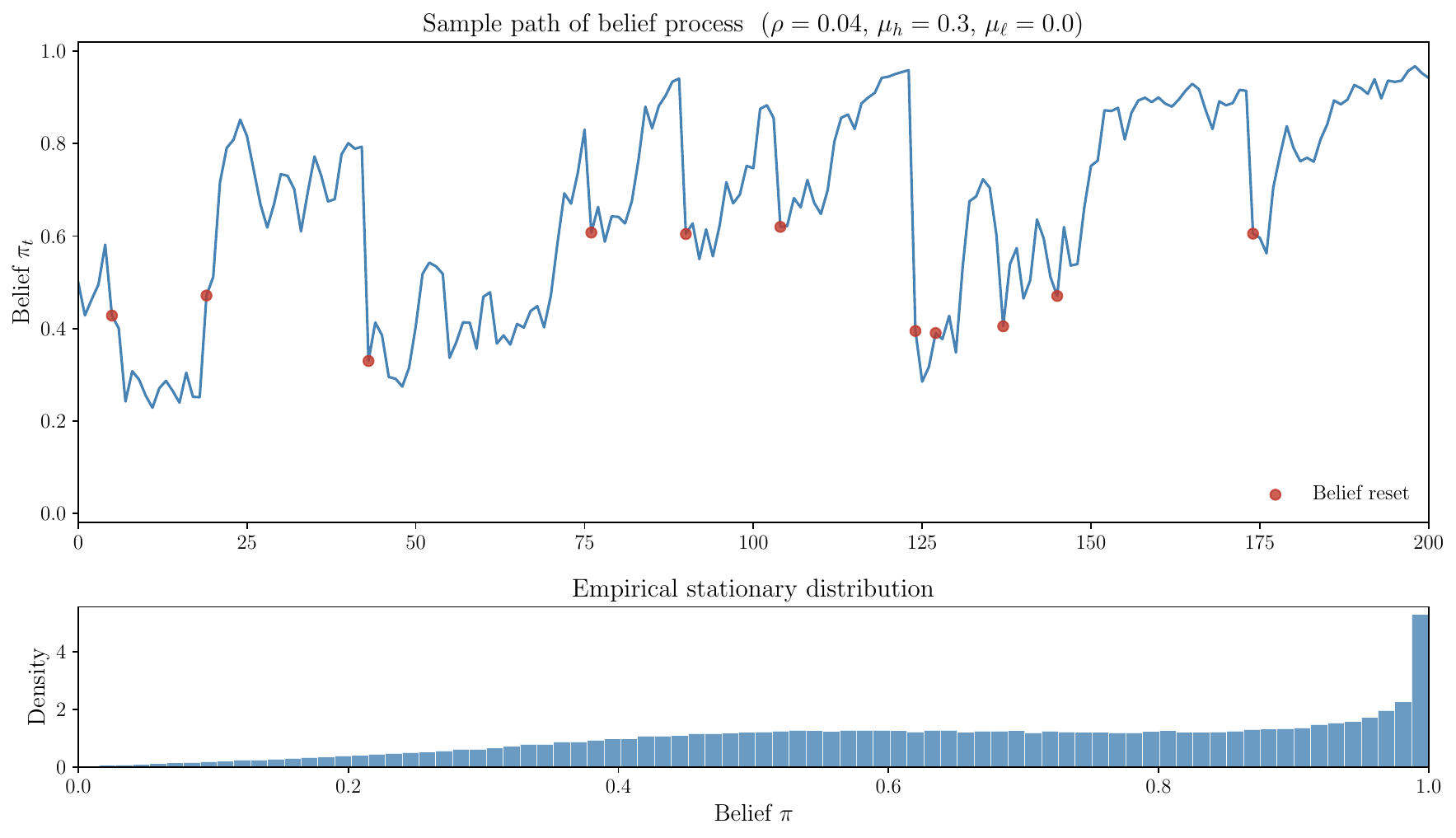}
    \caption{Simulation of the belief shock model in \cref{ss:bshock}.}
    \label{fig:belief_shocks}
\end{figure}

\section{Noncompact State Space, Continuous Time}\label{s:nssct}

Now we state a continuous time result that is closely related to \cref{t:orgs}.
As before, we take $(S, \leq)$ to be a partially ordered Polish space satisfying
\cref{a:pos} and $\precsd$ to be stochastic dominance on $\pP(S)$.
We introduce a continuous-time mixing condition in \cref{ss:ctt} and then
apply it to a model of income dynamics in \cref{ss:purejump}.

\subsection{Continuous Time Theory}\label{ss:ctt}

We start with a mixing condition: a transition probability function 
$(P_t)$ on $S$ will be called \navy{weakly order mixing} if there exists
an $(S \times S)$-valued strong Markov process $((X_t), (X_t'))$ on 
a probability space $(\Omega, \fF, \PP)$ such that
\begin{enumerate}
    \item $(X_t)$ and $(X_t')$ are both $(P_t)$-Markov, and
    \item the stopping time $\tau \coloneq \inf \, \{t \geq 0 : X_t \leq X_t'\}$
        is $\fF$-measurable with
        \begin{equation}\label{eq:omb}
            \PP_{x,x'} \{ \tau < \infty\} = 1
            \quad \text{for all } x, x' \in S.
        \end{equation}
\end{enumerate}

As with order reversing, the intuition is that randomness eventually induces the
order $X_t \leq X_t'$, regardless of initial states $x$ and $x'$. The key
difference between weak order mixing and the order reversing property is that
the two processes need not be independent. This additional flexibility is used
in the applications below, where coupling jump times (processes jump
independently but at the same times) allows us to verify weak order mixing.

The strong Markov and measurability restrictions in the definition of weak order
mixing hold in essentially all applications of interest, such as when the
process has c\`adl\`ag paths under the standard constructions (see, e.g.,
\cite{ethier2009markov}).  Thus, the real restriction in weak
order mixing is \eqref{eq:omb}.

We can now state our main continuous time result for 
a transition probability function $(P_t)$ on $S$.

\begin{theorem}\label{t:orgsnew}
    Let $(P_t)$ be increasing. If
    $(P_t)$ is weakly order mixing, then $(P_t)$ is globally stable if and only
    if $(P_t)$ has at least one tight trajectory.
\end{theorem}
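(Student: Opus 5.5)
By \cref{t:act}, it suffices to show that weak order mixing together with monotonicity implies asymptotic contractivity of $(P_t)$ in $\beta$. The "only if" direction is immediate, since global stability gives the trajectory of $\phi^*$, which is constant and hence tight. The "if" direction then follows from (iii)$\Rightarrow$(i) of \cref{t:act}. So the whole task is to prove
\begin{equation*}
    \beta(\phi P_t, \psi P_t) \to 0
    \quad \text{for all } \phi, \psi \in \pP(S).
\end{equation*}

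\textbf{Reduction to point masses.} I first reduce to Dirac initial conditions. For $h \in ibS$ with $|h| \leq 1$,
\begin{equation*}
    |(\phi P_t)(h) - (\psi P_t)(h)|
    \leq \iint |P_t h(x) - P_t h(x')| \,\phi(\diff x)\,\psi(\diff x').
\end{equation*}
The supremum over $h$ cannot be passed inside naively. Instead, I will bound $|P_th(x)-P_th(x')|$ by a quantity $g_t(x,x')$ that is independent of $h$ and tends to $0$ for each pair $(x,x')$. Dominated convergence then finishes the reduction, since $g_t \leq 2$.

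\textbf{The coupling bound.} Fix $x, x'$ and use the weakly order mixing process $(X_t,X_t')$ started at $(x,x')$, with its stopping time $\tau$. For $t \geq \tau$, I use the strong Markov property at $\tau$. Conditional on $\fF_\tau$, $X_t$ has law $P_{t-\tau}(X_\tau,\cdot)$ and $X_t'$ has law $P_{t-\tau}(X'_\tau,\cdot)$. Since $X_\tau \leq X'_\tau$ and $P_{t-\tau}$ is increasing,
\begin{equation*}
    \EE[h(X_t) \mid \fF_\tau] \leq \EE[h(X'_t) \mid \fF_\tau]
    \quad \text{on } \{\tau \leq t\}.
\end{equation*}
Two technical points must be checked here. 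First, the infimum in the definition of $\tau$ must be attained, i.e.\ $X_\tau \leq X'_\tau$. I would handle this via right-continuity of paths and closedness of $\leq$, or else by replacing $\tau$ with a slightly later stopping time at which the order holds. Second, the strong Markov property is stated only for the single process $(X_t)$; for the pair, the joint process is strong Markov and each marginal is $(P_t)$-Markov, so the conditional marginals are as claimed. On $\{\tau > t\}$ I use $|h| \leq 1$. Together these give
\begin{equation*}
    P_th(x) - P_th(x') = \EE[h(X_t) - h(X'_t)] \leq 2\,\PP_{x,x'}\{\tau > t\}.
\end{equation*}
This is a one-sided bound only. Applying the same argument to the pair started at $(x',x)$ gives the reverse inequality with $\tau' = \inf\{t : X'_t \leq X_t\}$, both processes being started at the swapped points.

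Setting $g_t(x,x') \coloneq 2\max\{\PP_{x,x'}\{\tau>t\},\, \PP_{x',x}\{\tau>t\}\}$, which tends to $0$ by \eqref{eq:omb}, yields
\begin{equation*}
    \beta(\phi P_t,\psi P_t) \leq \iint g_t \,\diff\phi \,\diff\psi \to 0.
\end{equation*}
This last step also requires $g_t$ to be jointly measurable in $(x,x')$. That follows from standard measurability of $(x,x') \mapsto \PP_{x,x'}(A)$ for a Markov family, together with $\fF$-measurability of $\tau$.

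The main obstacle is this strong Markov step: making rigorous that the order attained at $\tau$ propagates to $t$ in expectation, including the attainment issue at the infimum. The reduction and the appeal to \cref{t:act} are routine.
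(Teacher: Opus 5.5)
Your proposal is correct and follows the paper's route. The paper proves asymptotic contractivity in \cref{p:orwomnew} using the same strong Markov split over $\{\tau \leq t\}$ and $\{\tau > t\}$, the same symmetrized bound $2\max(\PP_{x,x'}\{\tau>t\},\PP_{x',x}\{\tau>t\})$, and the same coupling plus dominated convergence extension, and then invokes \cref{t:act}; a few of your details differ mildly.

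\begin{itemize}
    \item Your ``only if'' argument via the constant trajectory of $\phi^*$ is more direct than the paper's appeal to convergent trajectories being tight.
    \item The attainment issue $X_\tau \leq X'_\tau$ that you flag is one the paper passes over silently. It is harmless when paths are right-continuous and $\leq$ is closed.
    \item With swapped starting points, the relevant stopping time is still the original $\tau$, not your $\tau'$. Your definition of $g_t$ already reflects this correctly.
\end{itemize}
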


As in the discrete-time case (\cref{t:orgs}), global stability is
characterized by the combination of mixing properties and tightness.
We apply this result to piecewise deterministic Markov processes in
\cref{s:pdmp} below.

\subsection{Application: Income Dynamics Part I}\label{ss:purejump}

In the model of \citet{gabaix2016dynamics}, individual log income follows a
diffusion with constant drift, punctuated by jumps and random resets.  This
model is rather stylized. In practice, most workers' incomes exhibit neither
continuous drift nor positive quadratic variation (e.g., influence from Brownian
motion). Instead, most real-world income processes 
remain constant for nonzero time intervals and then jump due to
promotions, layoffs, retirement, etc.  Here we
study a model of income that mimics these dynamics.
(An additional advantage of the model is ease of simulation: no Euler--Maruyama
style discretization is required.) We analyze stationarity and asymptotics using
\cref{t:orgsnew}.  Then we examine tail properties.

Let $X_t$ represent log income at time $t$.
In our model, \navy{income shocks} arrive at rate $\lam_1 > 0$ and represent raises,
promotions, or better job offers. At such an event, log income jumps from $x$ to
$x + \eta$, where $\eta > 0$ is an {\sc iid} draw from a distribution $\mu$ on
$(0,\infty)$.  Resets arrive at rate $\lam_2 > 0$ and represent job loss, health
shocks, retirement, or career disruptions: the state jumps from $x$ to $h(x) +
\zeta$, where $\zeta$ is an {\sc iid} draw from a \navy{reset distribution} $\nu$ on $\RR$
and $h \in ib\RR$ is a \navy{reset function}.  The assumption that $h$ is increasing
represents the idea that high-wage workers are likely to be better off after
career disruptions than low wage workers.  Pure resets can be obtained by
setting $h$ equal to some constant.

By the superposition property of Poisson processes, the two independent shock streams
can be merged into a single Poisson process with rate $\lam \coloneq \lam_1 + \lam_2$.  At each jump, a reset
occurs with probability $p\coloneq \lam_2/\lam$ and an income shock with probability $q\coloneq \lam_1/\lam$.
Income is constant between jumps, so the process satisfies $X_t = Z_{N_t}$, where
$(N_t)_{t \geq 0}$ is a Poisson process with parameter $\lam$ that counts the number of jumps by time
$t$, and $(Z_n)_{n \in \ZZ_+}$ is the \navy{embedded chain}, defined by 
\begin{equation}\label{eq:purejumpchain}
    Z_{n+1} = 
    \begin{cases}
        Z_n + \eta_{n+1} & \text{with probability } q \; \text{(income shock)},\\
        h(Z_n) + \zeta_{n+1} & \text{with probability } p  \; \text{(reset)}.
    \end{cases}
\end{equation}
Here $(\eta_n)_{n \in \NN}$ and $(\zeta_n)_{n \in \NN}$ are independent and {\sc
iid}, drawn from $\mu$ and $\nu$ respectively. We can now state the following stability result.  

\begin{proposition}\label{p:purejumpstable}
    If the support of $\nu$ is larger than the range of $h$, 
    then the log income process is globally stable: there exists a unique
    stationary distribution $\phi^*$ on $\RR$ and
    \begin{equation}\label{eq:purejumpgs}
        \lim_{t \to \infty} \beta(\phi P_t, \phi^*) = 0
        \quad \text{for all} \quad \phi \in \pP(\RR).
    \end{equation}
\end{proposition}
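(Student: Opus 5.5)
The plan is to apply \cref{t:orgsnew}. This requires three things: that $(P_t)$ is increasing, that it is weakly order mixing, and that it has a tight trajectory.

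\textbf{Monotonicity.} The jump kernel $K$ of the embedded chain \eqref{eq:purejumpchain} is increasing. To see this, couple two starting points $z\le z'$ with the same draws of the jump type, $\eta$ and $\zeta$. Then $z+\eta\le z'+\eta$, and since $h$ is increasing, $h(z)+\zeta\le h(z')+\zeta$. As in the wage model, the semigroup has the representation $P_t=\sum_n e^{-\lam t}(\lam t)^n K^n/n!$. This is a mixture of increasing kernels with state-independent weights, so each $P_t$ is increasing.

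\textbf{Tightness.} I would show that $(\delta_x P_t)_{t\ge0}$ is tight for a fixed $x$. Bounding the mass at $+\infty$ is the delicate part. After the last reset, $X_t=h(Z)+\zeta+\sum_{j\le M}\eta_j$, where $M$ is the number of income shocks since that reset. The number of shocks between resets is geometric with parameter $p$, and $M$ is stochastically dominated by this geometric variable. Hence $h(\cdot)+\zeta+\sum_{j\le G}\eta_j$ is a single random variable, independent of $t$, that dominates the upper side once a reset has occurred; note $h$ is bounded. The probability that no reset occurs by time $t$ is $e^{-\lam_2 t}$. For small $t$, the laws $\delta_xP_t$ are dominated by $x+\sum_{j\le N_T}\eta_j$. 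On the lower side, each path is either $x+(\text{nonnegative})$ if no reset has occurred, or at least $\inf h+\zeta+0$. Combining these bounds gives order bounds $\ell\precsd \delta_xP_t\precsd u$ for all $t$, so tightness follows from \cref{p:tiob}. This step needs $\inf h>-\infty$ and $\sup h<\infty$, both of which hold because $h\in ib\RR$.

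\textbf{Weak order mixing.} This is the main obstacle. I would couple $X$ and $X'$ so that they use the same Poisson clock and the same jump types (reset versus shock), but independent $\eta$ and $\zeta$ draws. Each marginal is then $(P_t)$-Markov, the pair is a càdlàg pure jump strong Markov process, and $\tau$ is a measurable hitting time. Since $\lam_2>0$, common reset events occur infinitely often almost surely. At such an event, starting from states $z$ and $z'$, the new states are $h(z)+\zeta$ and $h(z')+\zeta'$ with $\zeta$ and $\zeta'$ independent. The hypothesis that the support of $\nu$ is larger than the range of $h$ is what should make the probability that $h(z)+\zeta\le h(z')+\zeta'$ bounded below uniformly in $z$ and $z'$. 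Concretely, choose points $s_1<s_2$ in the support of $\nu$ with $s_2-s_1>\sup h-\inf h$, interpreting the hypothesis this way. Taking $\zeta$ near $s_1$ and $\zeta'$ near $s_2$ then gives probability at least $\nu(B_\delta(s_1))\nu(B_\delta(s_2))=:c>0$, uniformly. If the paper intends a different reading of the hypothesis, this is the point where the argument would need adjusting.

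By the strong Markov property applied at successive reset times, the probability that the order is not achieved at any of the first $k$ resets is at most $(1-c)^k\to0$. Therefore $\PP_{x,x'}\{\tau<\infty\}=1$, which establishes weak order mixing. \cref{t:orgsnew} then gives global stability, which is \eqref{eq:purejumpgs}.
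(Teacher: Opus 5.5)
Your proposal is correct and takes essentially the same route as the paper. You verify that $(P_t)$ is increasing, weakly order mixing and has a tight trajectory, and then invoke \cref{t:orgsnew}. For weak order mixing you use the same coupling: a shared Poisson clock and shared jump types with independent shock values, with the support condition keeping $\PP\{\zeta' - \zeta \geq \bar h - \underline h\}$ bounded away from zero at each reset. The remaining differences are minor and both versions work: you get monotonicity from the Poisson mixture $P_t = \sum_n e^{-\lam t}(\lam t)^n K^n/n!$ rather than a pathwise coupling, and you bound the upper tail by counting income shocks since the last reset (dominated by a geometric variable) rather than passing through the embedded chain.
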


\cref{p:purejumpstable} obviously implies that income itself is globally stable,
with unique stationary density $\psi^*$ on $(0,\infty)$ given by $\psi^*(y) =
\pi(\ln y)/y$, where $\pi$ is the density of $\phi^*$.

To clarify the assumptions, let $\underline{h} = \inf h$ and $\bar{h} = \sup h$.
The condition on $\nu$ says that there exist $a,b$ in the support of
$\nu$ with $b - a > \bar{h} - \underline{h}$.  If $h$ is constant (pure
resets), this just means that the reset shock is nondegenerate.  If the support
of $\nu$ is all of $\RR$, then the condition is always satisfied, since $h$ is
assumed to be bounded.  (For this case the boundedness assumption on $h$ can
potentially be weakened via a drift condition on $x \mapsto |h(x)|$.
We leave this analysis to future research.)

\begin{proof}[Proof of \cref{p:purejumpstable}]
    Let $(P_t)_{t \geq 0}$ be the transition probability function of the jump
    process $(X_t)_{t \geq 0}$. We
    verify that $(P_t)$ is increasing, weakly order mixing, and has a tight
    trajectory. Global stability then follows from \cref{t:orgsnew}.

    \textit{$(P_t)$ is increasing.}  Fix $t > 0$. On a common probability space,
    we construct two copies of the income process from initial conditions $x
    \leq x'$ using the same jump times, the same shock-type indicators (income
    vs.\ reset), and the same shock values.  For the embedded chains, $Z_0 = x
    \leq x' = Z_0'$. If $Z_n \leq Z_n'$ and the $(n+1)$-th jump is an income
    shock, then
    \begin{equation*}
        Z_{n+1} = Z_n + \eta_{n+1} \leq Z_n' + \eta_{n+1} = Z_{n+1}',
    \end{equation*}
    since $\eta_{n+1} > 0$.  If it is a reset, then
    \begin{equation*}
        Z_{n+1} = h(Z_n) + \zeta_{n+1} \leq h(Z_n') + \zeta_{n+1} = Z_{n+1}',
    \end{equation*}
    since $h$ is increasing.  By induction $Z_n \leq Z_n'$ for all $n$, and
    therefore $X_t = Z_{N_t} \leq Z_{N_t}' = X_t'$.
    It follows directly that $P_t(x, \cdot) \precsd P_t(x', \cdot)$.

    \textit{$(P_t)$ is weakly order mixing.}  To prove this fact, we construct two copies using the
    same jump times and the same shock-type indicators, but independent shock
    values: $(\eta_n, \zeta_n)$ for the first copy and $(\eta_n', \zeta_n')$ for
    the second.  Each marginal is $(P_t)$-Markov.  The joint process $((X_t),
    (X_t'))$ is a pure jump process on $\RR^2$ with constant total rate $\lam$
    and hence strong Markov
    \citep[Theorem~2.4]{rudnicki2017piecewise}.  Since $\{(x, x') \in \RR^2 : x
    \leq x'\}$ is closed, $\tau \coloneq \inf\{t \geq 0 : X_t \leq X_t'\}$ is
    a measurable stopping time.

    Let $A_n$ be the event that the $n$-th jump is a reset and
    $\zeta_n' - \zeta_n \geq \bar{h} - \underline{h}$.
    On $A_n$, we have
    \begin{equation*}
        Z_n
        = h(Z_{n-1}) + \zeta_n
        \leq \bar{h} + \zeta_n
        \leq \underline{h} + \zeta_n'
        \leq h(Z_{n-1}') + \zeta_n'
        = Z_n'.
    \end{equation*}
    The last inequality means that $\tau \leq T_n$ on $A_n$, where $T_n$ is the
    $n$-th jump time. Since the shock type and shock values at step $n$ are
    independent of the filtration $\fF_{n-1}$,
    \begin{equation*}
        \PP\{A_n^c \mid \fF_{n-1}\} \leq 1 - p\del,
    \end{equation*}
    where $\del \coloneq \PP\{\zeta' - \zeta \geq \bar{h} - \underline{h}\}$.
    Hence $\PP\{\tau > T_n\} \leq (1 - p\del)^n$. 
    By our assumption on the support of $\nu$, we have $\del >
    0$ and hence $\PP\{\tau > T_n\} \to 0$ as $n \to \infty$.
    As $T_n \to \infty$ almost surely, it follows that $\PP_{x,x'}\{\tau < \infty\} = 1$.

    \textit{$(P_t)$ has a tight trajectory.}  Fix $x_0 \in \RR$ and $\ep > 0$.
    Since the shock types and values $(\eta_n, \zeta_n)$ are independent of the
    jump times, the embedded chain $(Z_n)$ is independent of the
    counting process $(N_t)$.  Therefore
    \begin{equation*}
        \PP_{x_0}\{X_t \notin D\}
        = \sum_{n=0}^{\infty} \PP\{N_t = n\} \, \PP_{x_0}\{Z_n \notin D\}
        \leq \sup_{n \geq 0} \PP_{x_0}\{Z_n \notin D\},
    \end{equation*}
    so it suffices to find, for each $\ep > 0$, a compact $D$ with
    $\PP_{x_0}\{Z_n \notin D\} < \ep$ for every $n \geq 0$.

    Let $R \sim \mathrm{Geom}(p)$ on $\{0,1,2,\ldots\}$ denote the
    number of income shocks in a generic cycle between consecutive
    resets, and let $W \coloneq \sum_{i=1}^{R} \eta_i$ be the total
    income accumulated in one such cycle.
    Before the first reset (i.e., for $n \leq R$), every jump is an
    income shock, so
    $Z_n = x_0 + \eta_1 + \cdots + \eta_n \leq x_0 + W$.
    At each reset step $k$, the post-reset state satisfies
    $\underline{h} + \zeta_k \leq Z_k \leq \bar{h} + \zeta_k$.
    Between that reset and the next, the chain only accumulates
    positive income shocks, so for any $n$ falling in the same cycle,
    \begin{equation*}
        \underline{h} + \zeta_k \leq Z_n \leq \bar{h} + \zeta_k + W',
    \end{equation*}
    where $W'$ is an independent copy of $W$.
    Since $(\zeta_n)$ and $(\eta_n)$ are {\sc iid} sequences, the
    distributions of $\zeta_k$ and $W'$ do not depend on the cycle.
    It follows that the sequence of distributions $(P^n(x_0, \cdot))_{n \in \NN}$
    of the embedded chain is order bounded in $(\pP(\RR), \precsd)$,
    and hence tight by \cref{p:tiob}. Global stability now follows from \cref{t:orgsnew}.
\end{proof}

\cref{fig:income_purejump} illustrates a simulation of income $Y_t \coloneq \exp(X_t)$
with $h \equiv 0$, $\mu = \mathrm{Exp}(20)$ (mean raise of $5\%$ in
log income), $\lam_1 = 1.0$, $\lam_2 = 0.1$, and {\sc iid} Gaussian reset
shocks $\zeta \sim N(0, 0.09)$.  Income shocks are frequent,
producing a staircase pattern of gradual
accumulation depleted by sharp resets.  The empirical stationary
distribution, computed from 200{,}000 time units, is right-skewed with a heavy
tail. 

\begin{figure}[ht]
    \centering
    \includegraphics[width=\textwidth]{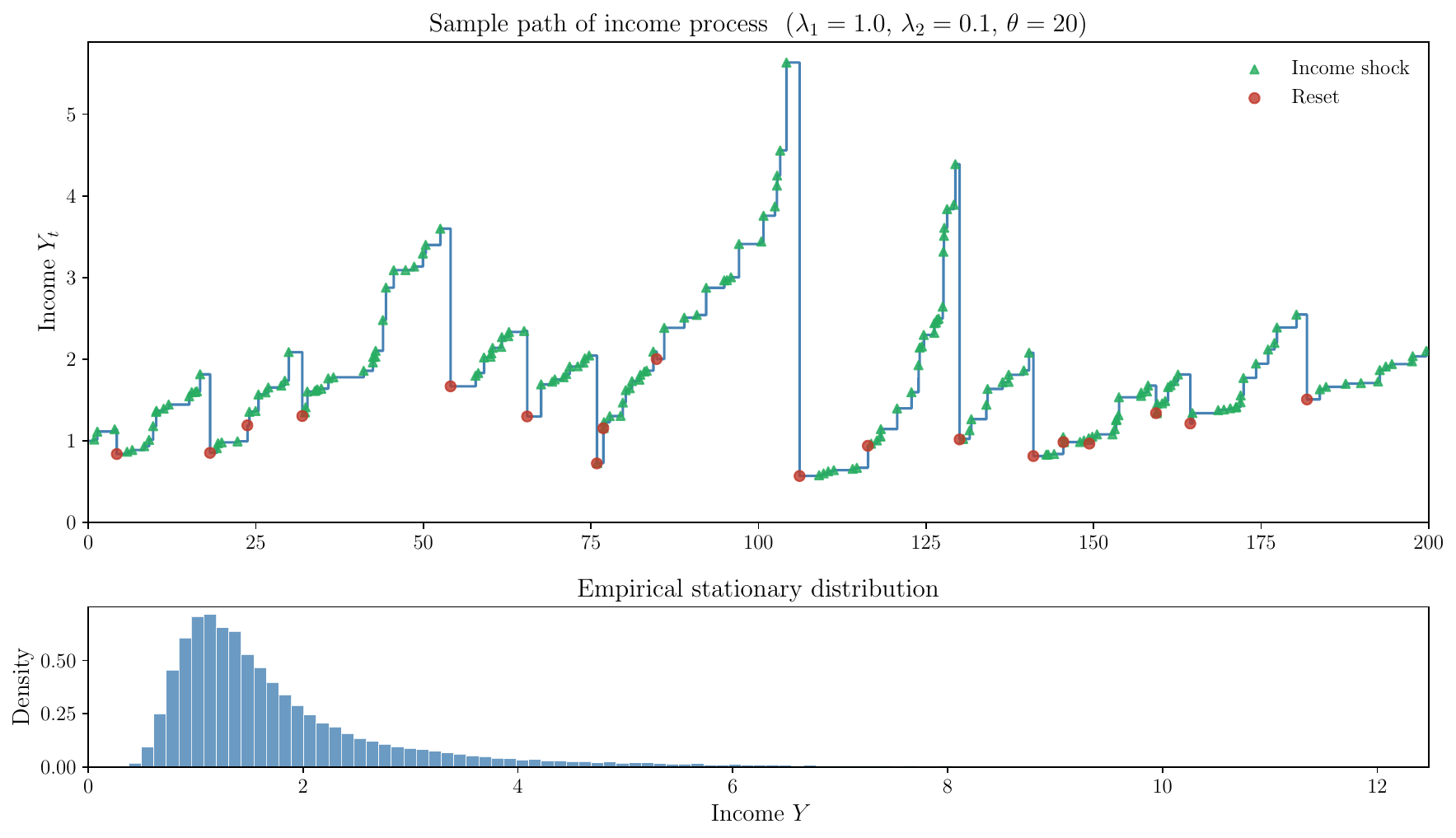}
    \caption{Simulation of the pure jump income model in \cref{ss:purejump}.}
    \label{fig:income_purejump}
\end{figure}

The next result specializes the model to obtain a closed-form stationary
distribution and a Pareto tail.  Income shocks are exponentially distributed and
resets return the agent to a fixed entry level.

\begin{corollary}\label{c:purejumppareto}
    Let $h \equiv x_0$, $\nu = \del_0$, and $\mu =
    \mathrm{Exp}(\theta)$ for some $\theta > 0$. In this setting, for the income
    level $Y = e^X$, the tail of the unique stationary distribution is Pareto:
    for $y > y_0 \coloneq e^{x_0}$,
    \begin{equation}\label{eq:pjpareto}
        \PP(Y > y)
        = q \left(\frac{y}{y_0}\right)^{-\alpha}
        \quad \text{with} \quad
        \alpha \coloneq \frac{\lam_2 \theta}{\lam_1 + \lam_2}.
    \end{equation}
\end{corollary}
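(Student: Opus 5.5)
The plan is to first get existence and uniqueness of the stationary distribution from \cref{t:orgsnew}, and then identify that distribution explicitly through the embedded chain $(Z_n)$. \cref{p:purejumpstable} cannot be quoted directly. With $h \equiv x_0$ and $\nu = \delta_0$, the support of $\nu$ is a single point, so the condition $b - a > \bar h - \underline h = 0$ fails. I will instead re-verify the three hypotheses of \cref{t:orgsnew}.

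Monotonicity and the tight trajectory go through verbatim from the proof of \cref{p:purejumpstable}. Those arguments did not use the support condition: with $\zeta \equiv 0$, the cycle bound $x_0 \le Z_n \le x_0 + W$ (or $x_0 \le Z_n \le x_0 + W$ before the first reset as well, taking initial state $x_0$) gives order boundedness and hence tightness by \cref{p:tiob}. For weak order mixing I will use the same coupling as before: common jump times and common shock-type indicators, with independent income shocks. The key observation is that at the first reset time $T$ both copies jump to exactly $h + \zeta = x_0$. So $X_T = X_T'$, and hence $\tau \le T < \infty$ almost surely, since resets arrive at rate $\lambda_2 > 0$. \cref{t:orgsnew} then gives a unique stationary $\phi^*$.

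To identify $\phi^*$, I will use the uniformization representation. The process satisfies $X_t = Z_{N_t}$ with $(N_t)$ Poisson of constant rate $\lambda$, independent of $(Z_n)$. Hence $P_t = \sum_n e^{-\lambda t} (\lambda t)^n K^n / n!$, where $K$ is the kernel of \eqref{eq:purejumpchain}, exactly as in \eqref{eq:ptfjk}. Consequently any $\phi$ with $\phi K = \phi$ satisfies $\phi P_t = \phi$ for all $t$, and by uniqueness $\phi = \phi^*$. It therefore suffices to exhibit a $K$-invariant distribution.

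The candidate comes from looking backward from a stationary time. There, $X - x_0$ is the sum of the income shocks since the last reset. The number $R$ of such shocks satisfies $\PP\{R = k\} = p q^k$ for $k \ge 0$. A geometric sum of $\mathrm{Exp}(\theta)$ variables equals $0$ with probability $p$, and with probability $q$ is distributed $\mathrm{Exp}(\theta p)$. So the candidate is
\begin{equation*}
    \phi = p\,\delta_{x_0} + q \cdot \mathcal{L}(x_0 + E), \qquad E \sim \mathrm{Exp}(\theta p).
\end{equation*}
I will verify $\phi K = \phi$ directly, using Laplace transforms of $X - x_0$. Under $\phi$ the transform is
\begin{equation*}
    L(s) = p + \frac{q\,\theta p}{\theta p + s}.
\end{equation*}
One step of $K$ produces $p \cdot 1 + q\, L(s)\, \theta/(\theta + s)$, and a short algebraic check should confirm that this equals $L(s)$. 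This algebra is the only real computation.

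Finally, for $y > y_0$ we have
\begin{equation*}
    \PP(Y > y) = \PP\{X - x_0 > \ln(y/y_0)\} = q\, e^{-\theta p \ln(y/y_0)} = q\,(y/y_0)^{-\alpha},
\end{equation*}
with $\alpha = \theta p = \lambda_2 \theta/(\lambda_1 + \lambda_2)$, which is \eqref{eq:pjpareto}. The main obstacle I anticipate is the first step: making sure weak order mixing is correctly established despite the failure of the support hypothesis. The simultaneous reset to $x_0$ handles this cleanly, but I will make sure the strong Markov and measurability parts are cited as in \cref{p:purejumpstable}.
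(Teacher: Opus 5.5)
Your proof is correct, but it is organized differently from the paper's. The paper argues directly from the regenerative structure. It asserts that the stationary law of $(X_t)$ coincides with that of the embedded chain because the jump rate is constant. It then asserts that ``in stationarity'' the state is $x_0 + S_M$ with $M$ geometric, obtains the density $pq\theta e^{-p\theta s}$ of $S_M$ by summing Gamma densities, and integrates the tail. You use the backward-looking heuristic only to guess the candidate $p\,\delta_{x_0} + q\,\mathcal{L}(x_0 + E)$. You then verify $\phi K = \phi$ through the Laplace-transform identity $p + q\theta p/(\theta p + s) = p(\theta + s)/(p\theta + s)$, transfer invariance from $K$ to every $P_t$ via the Poisson-mixture representation, and conclude with uniqueness. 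This makes rigorous exactly the two steps the paper leaves informal: the identification of continuous-time with embedded-chain stationarity, and the claim that the stationary state is $x_0 + S_M$. The cost is that the answer must be guessed rather than derived.

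Your first step is also a genuine improvement. As you note, with $h$ constant and $\nu = \del_0$ the stated hypothesis of \cref{p:purejumpstable} fails. The paper's proof of \cref{c:purejumppareto} never re-establishes the uniqueness that the statement presupposes. Your simultaneous-reset argument supplies it. It is precisely what the paper's own weak-order-mixing argument yields here, since $\del = \PP\{\zeta' - \zeta \geq \bar h - \underline h\} = 1$. So only the strict inequality in the stated support condition fails, not the proof.
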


\begin{proof}
    Since income is constant between jumps and the jump rate $\lam$ is
    independent of the state, the stationary distribution of the continuous-time
    process $(X_t)$ coincides with that of the embedded chain $(Z_n)$.  The
    chain regenerates at each reset: the post-reset state is always $x_0$.
    Between consecutive resets, the chain accumulates $M$ income shocks before
    the next reset, where $M \sim \mathrm{Geom}(p)$ on $\{0, 1, 2, \ldots\}$.
    In stationarity, the state is $x_0 + S_M$ where $S_M = \eta_1 + \cdots +
    \eta_M$.

    With probability $p$, $M = 0$ and $S_M = 0$, giving an atom at $x_0$.  For
    $M \geq 1$, conditional on $M = m$, the sum $S_m \sim
    \mathrm{Gamma}(m, \theta)$.  Summing over $m$, the density of $S_M$ on
    $(0, \infty)$ is
    \begin{equation*}
        f_{S_M}(s)
        = \sum_{m=1}^{\infty} p q^m
          \cdot \frac{\theta^m s^{m-1}}{(m-1)!} e^{-\theta s}
        = pq\theta \, e^{-\theta s}
          \sum_{k=0}^{\infty} \frac{(q\theta s)^k}{k!}
        = pq\theta \, e^{-p\theta s}
    \end{equation*}
    for $s > 0$.  Since $\phi^*$ is the distribution of $x_0 + S_M$, it follows
    that the density of $\phi^*$ on $(x_0, \infty)$ is 
    $\pi(x) \coloneq pq\theta \, e^{-p\theta(x - x_0)}$. Thus,  for $y \geq y_0$,
    \begin{equation*}
        \PP(Y > y)
        = \int_{\ln y}^{\infty} pq\theta \, e^{-p\theta(x - x_0)} \diff x
        = q \, e^{-p\theta(\ln y - x_0)}
        = q \left(\frac{y}{y_0}\right)^{-\alpha}. \qedhere
    \end{equation*}
\end{proof}

\section{Piecewise Deterministic Markov Processes}\label{s:pdmp}

In this section we develop a general stability result for piecewise
deterministic Markov processes (PDMPs) that can be applied off the shelf
in a range of settings. The theory is presented in \cref{ss:pdmptheory}
and applied to a model of income dynamics with deterministic drift in
\cref{ss:driftmodel}.

\subsection{Theory}\label{ss:pdmptheory}

Let $(S, \leq)$ be a partially ordered Polish space satisfying \cref{a:pos}. A
\navy{piecewise deterministic Markov process} (PDMP) on $S$ is generated by the
following characteristics:
\begin{enumerate}
    \item a \navy{semi-flow} $\Phi \colon S \times \R_+ \to S$,
    \item a \navy{jump intensity} $\lambda \in (0,\infty)$,
    \item a \navy{jump shock space} $U$ and shock distribution $\mu$ on $U$,
        and
    \item a \navy{jump function} $F \colon S \times U \to S$. 
\end{enumerate}
Here $U$ is a Polish space and the jump function $F$ is assumed to be measurable.
The map $\Phi$ is called a \navy{semi-flow} because it satisfies $\Phi(x, 0) = x$
and $\Phi(\Phi(x, s), t) = \Phi(x, s + t)$ for all $x \in S$ and $s, t \geq 0$.
We assume that $\Phi$ is continuous.
Informally, the process evolves as follows:
\begin{enumerate}
    \item Follow the deterministic flow: $X_t = \Phi(X_0, t)$ until the first jump.
    \item At the first jump time $T$, draw $\xi$ from $\mu$ and set $X_T = F(X_{T^-}, \xi)$.
    \item Repeat from the new state $X_T$.
\end{enumerate}
In most applications, the semi-flow is the solution to an ODE.  For example, if
$S = \RR$ and the deterministic dynamics have the linear form $\dot x = a x$, then $\Phi(x, t) = x\exp(at)$.

To formalize the construction of the associated Markov process $(X_t)_{t \geq
0}$, we first collect the following components:
\begin{enumerate}
    \item an $S$-valued random element $X_0$ with distribution $\psi \in \pP(S)$,
    \item a real-valued {\sc iid} sequence $(E_n)_{n \in \NN}$ with common distribution
        Exp$(\lambda)$, and
    \item a $U$-valued {\sc iid} sequence $(\xi_n)_{n \in \NN}$ with common distribution $\mu$,
\end{enumerate}
All random elements live on a common probability space $(\Omega, \fF, \PP)$.
We first build the \navy{embedded chain} $(Z_n)_{n \in \ZZ_+}$ via
\begin{equation*}
    Z_0 = X_0
    \quad \text{and} \quad
    Z_{n+1} = F(\Phi(Z_n, E_{n+1}), \xi_{n + 1}).
\end{equation*}
We define the jump times $T_0 \coloneq 0$ and $T_n \coloneq \sum_{i=1}^n E_i$,
while $N_t \coloneq \max \{n \geq 0 \,:\, T_n \leq t\}$ counts the number of jumps up until time
$t$. Finally, we construct $(X_t)_{t \geq 0}$ via
\begin{equation*}
    X_t = \Phi(Z_{N_t}, t - T_{N_t}) \qquad t \geq 0.
\end{equation*}
A more intuitive representation is
\begin{equation*}
        X_t = \Phi(Z_n, t - T_n)
        \quad \text{for $t \in [T_n, T_{n+1})$ and $n \in \ZZ_+$.}
\end{equation*}
Sample paths are c\`adl\`ag, with smooth deterministic segments broken by
jumps. The process $(X_t)$ obeys the strong Markov property
\citep[Theorem~2.4]{rudnicki2017piecewise}. 
We work with the following assumptions.

\begin{assumption}[PDMP Monotonicity]\label{ass:flow}
    The primitives are such that
    \begin{enumerate}
        \item For all $t \geq 0$, the function $x \mapsto \Phi(x, t)$ is order
            preserving on $S$.  
        \item For all $u \in U$, the function $x \mapsto F(x, u)$ is order preserving on $S$.
    \end{enumerate}
\end{assumption}

In the next assumption, $\xi$ and $\xi'$ are independent draws from $\mu$.

\begin{assumption}[PDMP Mixing]\label{a:pdmpm}
    There exist measurable functions $f_1,
    f_2 \colon U \to S$ such that
    \begin{equation*}
        \text{
            $f_1(\xi) \leq F(x, \xi) \leq f_2(\xi)$
            for all $x \in S$ and $\PP\{f_2(\xi) \leq f_1(\xi')\} > 0$.
        }
    \end{equation*}
\end{assumption}

The transition probability function corresponding to the PDMP $(X_t)_{t \geq 0}$
is given by 
\begin{equation*}
    P_t \,(x, B) = \PP \{ X_t \in B \,|\, X_0 = x\}
    \qquad (t \in \RR_+, \; x \in S, \; B \in \bB).
\end{equation*}
Under the stated monotonicity and mixing conditions, we can prove
the following result.  

\begin{theorem}\label{t:pdmpgs}
    If \cref{ass:flow,a:pdmpm} hold, then the PDMP is globally stable, with a
    unique stationary distribution $\phi^*$ and
    \begin{equation}\label{eq:pdmps}
        \lim_{t \to \infty} \beta(\phi P_t, \phi^*) = 0
        \quad \text{for all} \quad \phi \in \pP(S).
    \end{equation}
\end{theorem}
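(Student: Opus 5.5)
We will verify the three hypotheses of \cref{t:orgsnew}: the transition probability function $(P_t)$ of the PDMP is increasing, it is weakly order mixing, and it has at least one tight trajectory. The argument mirrors the proof of \cref{p:purejumpstable}, with the semi-flow $\Phi$ now acting between jumps.

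\emph{Monotonicity.} Fix $x \leq x'$. We build two copies of the PDMP on one probability space, using the same exponential clocks $(E_n)$ and the same shocks $(\xi_n)$. The embedded chains satisfy $Z_0 = x \leq x' = Z_0'$. If $Z_n \leq Z_n'$, then $\Phi(Z_n, E_{n+1}) \leq \Phi(Z_n', E_{n+1})$ by \cref{ass:flow}(i), and applying $F(\cdot, \xi_{n+1})$ preserves this order by \cref{ass:flow}(ii). Induction gives $Z_n \leq Z_n'$ for all $n$. Since $X_t = \Phi(Z_{N_t}, t - T_{N_t})$ and the jump times are shared, (i) gives $X_t \leq X_t'$ pointwise. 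Hence $P_t(x,\cdot) \precsd P_t(x',\cdot)$.

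\emph{Weak order mixing.} We couple the two copies through common clocks $(E_n)$ but independent shock sequences $(\xi_n)$ and $(\xi_n')$. Each marginal is then $(P_t)$-Markov. The joint process is itself a PDMP on $S \times S$, with product flow, rate $\lambda$ and shock distribution $\mu \otimes \mu$, so it is strong Markov by \citep[Theorem~2.4]{rudnicki2017piecewise}. Because $\{x \leq x'\}$ is closed and the paths are c\`adl\`ag, $\tau$ is a measurable stopping time.

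At the $n$-th jump, \cref{a:pdmpm} gives $Z_n \leq f_2(\xi_n)$ and $f_1(\xi_n') \leq Z_n'$, regardless of the pre-jump states. So the event $A_n = \{f_2(\xi_n) \leq f_1(\xi_n')\}$ forces $\tau \leq T_n$. The events $A_n$ are iid with probability $\delta > 0$, so $\PP\{\tau > T_n\} \leq (1-\delta)^n \to 0$. Since $T_n < \infty$ almost surely, $\PP_{x,x'}\{\tau < \infty\} = 1$.

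\emph{Tight trajectory.} This is the main obstacle, because the flow can push mass toward the edges of $S$ between jumps. Start from $x_0 = f_1(\xi_0)$ for a fixed typical value, or simply from any $x_0$. After the first jump the state is sandwiched between $f_1(\xi_{N_t})$ and $f_2(\xi_{N_t})$, and it then flows for the elapsed time $t - T_{N_t}$. By \cref{ass:flow}(i),
\[
\Phi(f_1(\xi_{N_t}), t - T_{N_t}) \leq X_t \leq \Phi(f_2(\xi_{N_t}), t - T_{N_t}) .
\]
The elapsed time since the last jump is stochastically dominated by an $\mathrm{Exp}(\lambda)$ variable, independent of $\xi_{N_t}$, and the shock $\xi_{N_t}$ has law $\mu$. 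Before the first jump, $X_t = \Phi(x_0, t)$ occurs only with probability $e^{-\lambda t}$.

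The plan is to show that the upper and lower envelopes are tight uniformly in $t$, and then use \cref{p:tiob}. Because $\Phi$ is continuous, for each $\epsilon > 0$ we can choose a compact $K \subset U$ with $\mu(K^c) < \epsilon$ and a time $s_0$ with $e^{-\lambda s_0} < \epsilon$. The continuous image $\Phi(f_i(K) \times [0, s_0])$ is then compact, provided $f_i(K)$ is relatively compact. To secure this we may need to shrink $K$ via Lusin's theorem, so that $f_1$ and $f_2$ are continuous on $K$. That compact set captures $X_t$ with probability at least $1 - 3\epsilon$ for every $t$. For $t < s_0$, the early-time contribution $\Phi(x_0, [0, s_0])$ is also compact.

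One subtlety remains: the elapsed time $t - T_{N_t}$ is not exactly independent of $\xi_{N_t}$. Since the shocks are independent of the clocks, however, conditioning on the jump times leaves $\xi_{N_t} \sim \mu$, which suffices. This yields tightness of $(P_t(x_0,\cdot))_{t \geq 0}$, and \cref{t:orgsnew} then gives global stability and \eqref{eq:pdmps}.
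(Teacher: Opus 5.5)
Your proposal is correct and follows the paper's proof essentially step for step. It uses the same common-clock, common-shock coupling for monotonicity, the same common-clock, independent-shock coupling with the bound $\PP_{x,x'}\{\tau > T_n\} \leq (1-\delta)^n$ for weak order mixing, and the same tightness argument: the post-jump sandwich from \cref{a:pdmpm}, $\PP\{t - T_{N_t} > M\} \leq e^{-\lambda M}$, compactness of $\Phi$ applied to a compact set times $[0,M]$, and the initial arc $\Phi(\{x_0\} \times [0,T])$, all fed into \cref{t:orgsnew}.

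The one detour is Lusin's theorem on $U$, which you do not need. The laws of $f_1(\xi)$ and $f_2(\xi)$ on the Polish space $S$ are already tight, so the paper picks a compact $K \subset S$ that both charge with high probability and encloses it in a compact order interval $C$ via \cref{a:pos}, so that $Z_{N_t} \in C$ off a small event. This also repairs a slightly loose claim: the flowed images $\Phi(f_i(K) \times [0,s_0])$ do not themselves capture $X_t$; it is their order-interval hull (again compact by \cref{a:pos}) that does.
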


The proof is based on \cref{t:orgsnew} and can be found in
\cref{ss:pdmpgsproof}.  The strategy is to verify the three hypotheses of
\cref{t:orgsnew}.  Monotonicity is established by coupling two copies of the
PDMP with the same jump times and the same shocks: if one copy starts above the
other, then \cref{ass:flow} ensures that it stays above through both the
deterministic flow and the jumps.  Boundedness in probability follows from the
pathwise bounds in \cref{a:pdmpm}, which prevent post-jump states from escaping
to infinity, while the exponential inter-arrival times prevent the deterministic
flow from carrying the process too far between jumps.  The most interesting step
is weak order mixing, which uses a different coupling: the two copies share
jump times but receive independent shocks.  The condition
$\PP\{f_2(\xi) \leq f_1(\xi')\} > 0$ in \cref{a:pdmpm} then guarantees that,
at each shared jump, there is a positive probability that the relative ordering
of the two processes is reversed, regardless of their current states.

Now we turn to applications.

\subsection{Application: Income Dynamics Part II}\label{ss:driftmodel}

In Section~\ref{ss:purejump} we analyzed income dynamics through a pure jump
process. We now consider a model of income dynamics more closely aligned with \citet{gabaix2016dynamics},
where log income has a continuous drift component.  Unlike
\cite{gabaix2016dynamics}, we continue to avoid
the assumption of positive quadratic variation (e.g., Brownian motion), which is
difficult to motivate from data.  To make the proofs slightly simpler, we drop
the positive income shocks in Section~\ref{ss:purejump} (while retaining the
resets).

In our model, log income drifts deterministically between jumps,
following the ODE $\dot x = g(x)$ for some locally Lipschitz function $g \colon
\RR \to \RR$. At jump times, which occur at rate $\lambda$,
the state jumps from $x$ to $F(x, \zeta)$, where $\zeta$ is an independent draw
from $\nu$.

More formally, we study a PDMP on $S = \RR$ with the following primitives:
\begin{enumerate}
    \item a semi-flow $\Phi \colon \RR \times \RR_+ \to \RR$ generated by the ODE
        $\dot x = g(x)$,
    \item a constant jump intensity $\lam > 0$,
    \item a shock distribution $\nu$ on $\RR$, and
    \item a jump function $F(x, z) = h(x) + z$, where $h \in ibS$.
\end{enumerate}
Between jumps, $X_t = \Phi(Z_{N_t}, t - T_{N_t})$, where $(E_n)_{n \in \NN}$
are {\sc iid} Exp$(\lam)$ inter-arrival times and $N_t$ counts jumps up to time $t$
(as in \cref{s:pdmp}).  At each jump time, the state moves to
$h(X_{T^-}) + \zeta$.  The embedded chain is
$Z_{n+1} = h(\Phi(Z_n, E_{n+1})) + \zeta_{n+1}$.

The next proposition shows global stability under essentially the same
assumptions as \cref{p:purejumpstable}.

\begin{proposition}\label{p:driftincomestable}
    If the support of $\nu$ is larger than the range of $h$, then the log income
    process is globally stable: there exists a unique stationary distribution
    $\phi^*$ and
    \begin{equation}\label{eq:driftgs}
        \lim_{t \to \infty} \beta(\phi P_t, \phi^*) = 0
        \quad \text{for all} \quad \phi \in \pP(\RR).
    \end{equation}
\end{proposition}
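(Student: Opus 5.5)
The plan is to deduce \cref{p:driftincomestable} directly from \cref{t:pdmpgs}, by checking \cref{ass:flow} and \cref{a:pdmpm} for the primitives $U = \RR$, $\mu = \nu$, $F(x,z) = h(x) + z$, and the semi-flow $\Phi$ generated by $\dot x = g(x)$. One preliminary point must come first. Since $g$ is only locally Lipschitz, we must make sure that $\Phi$ is a globally defined continuous semi-flow on $\RR \times \RR_+$. I would argue that solutions cannot explode before an exponential jump time, or else add a standing hypothesis that is implicit in the setup, for example linear growth of $g$. In dimension one, a solution either converges monotonically to an equilibrium or runs monotonically off to $\pm\infty$. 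Under a no-explosion condition the flow is therefore global, and it is continuous by standard ODE theory.

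\emph{Monotonicity (\cref{ass:flow}).} For part (i), uniqueness of solutions to a scalar autonomous ODE with locally Lipschitz right-hand side means trajectories cannot cross. So $x \leq x'$ implies $\Phi(x,t) \leq \Phi(x',t)$ for all $t \geq 0$: if the ordering reversed at some time, continuity would give a meeting time, and uniqueness would force the two solutions to coincide from that time on, contradicting the reversal. For part (ii), $x \mapsto h(x) + z$ is increasing because $h \in ibS$.

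\emph{Mixing (\cref{a:pdmpm}).} Let $\underline h = \inf h$ and $\bar h = \sup h$, both finite because $h$ is bounded. Set $f_1(z) \coloneq \underline h + z$ and $f_2(z) \coloneq \bar h + z$, which are measurable and satisfy $f_1(z) \leq h(x) + z \leq f_2(z)$ for all $x$. The remaining requirement is $\PP\{\bar h + \zeta \leq \underline h + \zeta'\} = \PP\{\zeta' - \zeta \geq \bar h - \underline h\} > 0$ for independent $\zeta, \zeta' \sim \nu$. As in the proof of \cref{p:purejumpstable}, the support hypothesis gives points $a < b$ in the support of $\nu$ with $b - a > \bar h - \underline h$. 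Choose $\epsilon > 0$ with $b - a - 2\epsilon \geq \bar h - \underline h$. Both $\nu((a-\epsilon, a+\epsilon))$ and $\nu((b-\epsilon, b+\epsilon))$ are positive, so by independence
\begin{equation*}
    \PP\{\zeta' - \zeta \geq \bar h - \underline h\}
    \geq \nu((a-\epsilon,a+\epsilon))\,\nu((b-\epsilon,b+\epsilon)) > 0 .
\end{equation*}
With both assumptions verified, \cref{t:pdmpgs} yields a unique stationary distribution $\phi^*$ together with \eqref{eq:driftgs}.

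The main obstacle is the well-posedness issue raised at the start, namely global existence of $\Phi$ under only local Lipschitz continuity, since \cref{t:pdmpgs} needs a continuous semi-flow on all of $\RR \times \RR_+$. The rest is routine. A secondary concern is whether \cref{t:pdmpgs} implicitly requires control of drift excursions between jumps in order to obtain tightness; its statement lists no such condition, so I would rely on it as stated.
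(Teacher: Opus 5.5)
Your proposal is correct and follows the paper's proof. You verify \cref{ass:flow} through non-crossing of solutions of the scalar autonomous ODE together with monotonicity of $h$, verify \cref{a:pdmpm} with $f_1(z)=\underline h + z$ and $f_2(z)=\bar h + z$, and then invoke \cref{t:pdmpgs}. Your $\epsilon$-neighborhood argument just makes explicit the positivity of $\PP\{\zeta'-\zeta\geq \bar h-\underline h\}$, which the paper asserts directly.

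Your global-existence caveat is fair, since the paper simply takes $\Phi$ to be a semi-flow on $\RR\times\RR_+$. Resolve it with the standing hypothesis, not by arguing that no explosion occurs before a jump. That argument fails in general: for $g(x)=x^2$, the solution from $x>0$ explodes at time $1/x$, which comes before an $\mathrm{Exp}(\lambda)$ jump time with positive probability.
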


\begin{proof}
    We verify \cref{ass:flow,a:pdmpm}. Then we apply \cref{t:pdmpgs}.

    \textit{\cref{ass:flow}\,(i): the semiflow is order preserving.}  Since $g$
    is locally Lipschitz, the Picard--Lindel\"of theorem gives uniqueness of
    solutions.  Fix $x_1 < x_2$ and $t > 0$.  Seeking a contradiction, suppose 
    that $\Phi(x_1, s) \geq \Phi(x_2, s)$ for some $s > 0$.  Since
    $\Phi(x_1, 0) < \Phi(x_2, 0)$ and $\Phi$ is continuous, the intermediate
    value theorem yields $t_0 \in (0, s]$ with $\Phi(x_1, t_0) = \Phi(x_2,
    t_0)$.  Both trajectories pass through the same state at time $t_0$ and the
    ODE is autonomous, so uniqueness forces them to agree for all time,
    contradicting $x_1 \neq x_2$.  Hence $x \mapsto \Phi(x, t)$ is strictly
    increasing for every $t \geq 0$.

    \textit{\cref{ass:flow}\,(ii): the jump function is order preserving.}  For
    each $\zeta \in \RR$, the map $x \mapsto F(x, \zeta) = h(x) + \zeta$ is
    increasing since $h$ is assumed to be increasing.

    \textit{\cref{a:pdmpm}: bounding functions and coupling.}  Set $f_1(\zeta)
    = \underline{h} + \zeta$ and $f_2(\zeta) = \bar{h} + \zeta$, where
    $\underline{h} \coloneq \inf h$ and $\bar{h} \coloneq \sup h$.  By
    construction, $f_1(\zeta) \leq F(x, \zeta) \leq f_2(\zeta)$ for
    all $x \in \RR$.  For the coupling condition, let $\zeta, \zeta'$ be
    independent draws from $\nu$.  Then
    \begin{equation*}
        \PP\{f_2(\zeta) \leq f_1(\zeta')\}
        = \PP\{\zeta' - \zeta \geq \bar{h} - \underline{h}\}
        > 0,
    \end{equation*}
    where the strict inequality follows from our assumption on the support of
    $\nu$.  Global stability now follows from \cref{t:pdmpgs}.
\end{proof}

If we specialize to constant drift rate $g$ and a pure reset, we can easily obtain a
Pareto tail, in a result analogous to \cref{c:purejumppareto}. The tail exponent
is $\lam/g$, and hence entirely determined by the first-order balance between
deterministic growth and random resetting.  Further details are available from
the authors on request.

\cref{fig:income_pdmp} illustrates a simulation with constant drift $g(x) =
\mu = 0.05$, jump rate $\lam = 0.15$, pure reset $h \equiv 0$, and {\sc iid}
Gaussian reset shocks $\zeta \sim N(0, 0.09)$. The time series of income $Y_t
= e^{X_t}$ displays characteristic PDMP dynamics: smooth exponential growth
between jumps, interrupted by sharp resets. The empirical stationary
distribution is right-skewed with a Pareto tail of exponent $\lam/\mu
= 3$.

\begin{figure}[ht]
    \centering
    \includegraphics[width=\textwidth]{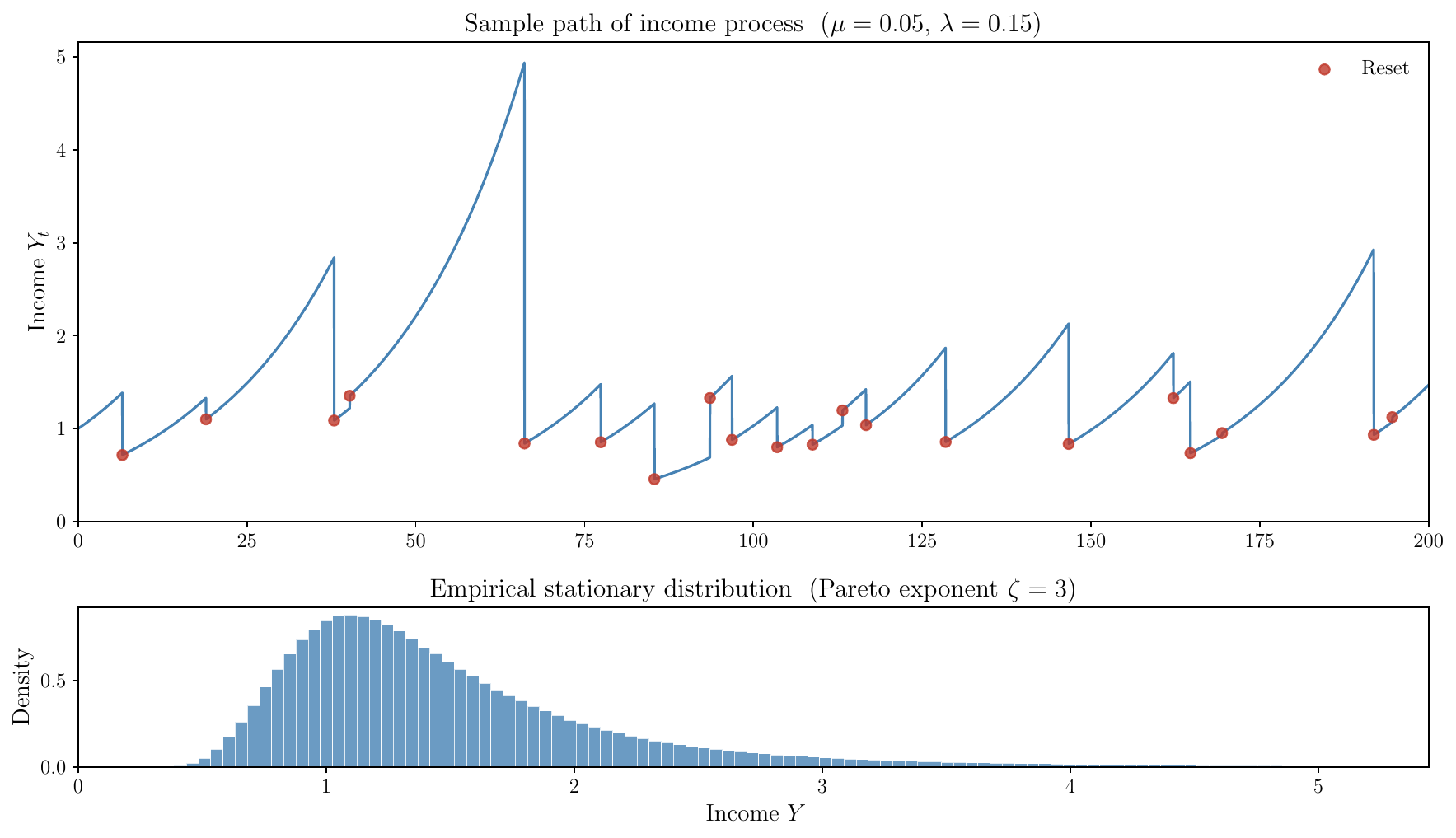}
    \caption{Simulation of the income dynamics model in \cref{ss:driftmodel}.}
    \label{fig:income_pdmp}
\end{figure}

\section{Conclusion}

We have shown that an order-preserving semigroup on a complete preordered
metric space is globally stable if and only if it is asymptotically
contractive and has at least one order-bounded trajectory. When applied to
monotone Markov models on partially ordered Polish spaces, this yields a
necessary and sufficient condition for the existence, uniqueness, and global
stability of stationary distributions: asymptotic contractivity combined
with tightness of at least one trajectory.

These results unify and extend earlier work by \cite{hopenhayn1992stochastic},
\cite{kamihigashi2014stochastic}, and other authors, weakening assumptions,
introducing new methods, and removing extraneous side conditions. The abstract
semigroup framework covers both discrete and continuous time, as well as
nonlinear Markov operators. Our applications to wage dynamics, Bayesian learning
with belief shocks, and income dynamics with Pareto tails illustrate the breadth
of the approach.

Many directions remain open. The mixing conditions used in this paper could
potentially be weakened further, for example by allowing state-dependent jump
intensities in the PDMP framework. Our Pareto tail result for income is
specialized and may hold under weaker assumptions.  The relationship between
tightness and order boundedness established here may also prove useful in other
settings involving monotone economic dynamics.

\appendix

\section{Proofs}\label{s:proofs}

This appendix collects remaining proofs.

\subsection{Proof of \texorpdfstring{\cref{t:bk}}{Theorem \ref{t:bk}}}

Throughout this section, \cref{a:env,a:env2} are in force.

\subsubsection{(ii) implies (i)}

Let $(T_t)$ be globally stable with stationary point $x^* \in
\Xsf$.  Then the sequence given by $T_t x^{*}$ for all $t \in \TT$ is an
order-bounded trajectory of $(T_t)$.  Moreover, picking arbitrary $x, y \in \Xsf$
and applying the triangle inequality,
\begin{equation*}
    d(T_t x, T_t y) \leq d(T_t x, x^*) + d(x^*, T_t y) \rightarrow 0
    \qquad (t \to \infty),
\end{equation*}
where the convergence holds by global stability.  Hence $(T_t)$ is
asymptotically contractive.

\subsubsection{(i) implies (ii)}

Now we assume that $(T_t)$ is asymptotically contractive on $\Xsf$ with at least one order-bounded trajectory.
Due to the last property, we can take $a, b, \bar x \in \Xsf$ such that
\eqref{eq:bt} holds.

\begin{lemma}\label{l:dis}
    For all $\epsilon > 0$, there exists a $\tau \in \TT$ such that
    \begin{equation}\label{eq:dis}
        s,t \in \TT \text{ with }  \tau \leq s,t
        \quad \implies \quad
        d(T_s \, \bar x, T_t \, \bar x) < \epsilon .
    \end{equation}
\end{lemma}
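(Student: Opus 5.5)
We use the order-bounded trajectory $a \preceq T_t \bar x \preceq b$ for all $t \in \TT$, asymptotic contractivity applied to the pair $(a,b)$, and the diagonal property. The aim is a Cauchy-type estimate for the trajectory of $\bar x$.

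Fix $\epsilon > 0$. Asymptotic contractivity gives $d(T_r a, T_r b) \to 0$ as $r \to \infty$, so we can choose $\tau \in \TT$ with $d(T_\tau a, T_\tau b) < \epsilon$. Now take $s, t \in \TT$ with $\tau \le s, t$, and write $s = \tau + s'$ and $t = \tau + t'$ with $s', t' \in \TT$. This decomposition is valid in both the discrete and the continuous case, since $\TT$ is closed under subtraction of smaller elements. The semigroup property gives $T_s \bar x = T_\tau (T_{s'} \bar x)$ and $T_t \bar x = T_\tau (T_{t'} \bar x)$. Since $a \preceq T_{s'} \bar x \preceq b$ and $T_\tau$ is order-preserving, we get $T_\tau a \preceq T_s \bar x \preceq T_\tau b$, and in the same way $T_\tau a \preceq T_t \bar x \preceq T_\tau b$. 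Thus both points lie in the order interval $[T_\tau a, T_\tau b]$, which is a genuine order interval because $a \preceq b$ implies $T_\tau a \preceq T_\tau b$.

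By the diagonal property in \cref{a:env},
\begin{equation*}
    d(T_s \bar x, T_t \bar x) \le d(T_\tau a, T_\tau b) < \epsilon,
\end{equation*}
which is \eqref{eq:dis}.

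The key step is to push the order bound forward through $T_\tau$ rather than comparing $T_s\bar x$ and $T_t\bar x$ directly. Once both points are sandwiched between $T_\tau a$ and $T_\tau b$, contractivity applied to the single pair $(a,b)$ yields a bound that is uniform in $s$ and $t$. Closedness of the preorder is not needed here.
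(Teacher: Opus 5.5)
Your proof is correct and follows the paper's argument: pick $\tau$ with $d(T_\tau a, T_\tau b) < \epsilon$, factor $T_s = T_\tau T_{s-\tau}$ and $T_t = T_\tau T_{t-\tau}$, use order preservation to place both points in $[T_\tau a, T_\tau b]$, and finish with the diagonal property. Your side remarks are also accurate: $T_\tau a \preceq T_\tau b$, so the interval is genuine, and closedness of the preorder is not needed at this step.
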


\begin{proof}
    Fix $\epsilon > 0$ and then use asymptotic
    contractivity to choose $\tau \in \TT$ such that $d(T_\tau \, a, T_\tau \, b) <
    \epsilon$.  For $s, t \geq \tau$ we have
    \begin{equation*}
        d(T_s \, \bar x, T_t \, \bar x)
         = d(T_\tau  T_{s - \tau} \, \bar x, T_\tau T_{t - \tau} \, \bar x)
         \leq d(T_\tau \, a, T_\tau \, b) < \epsilon.
    \end{equation*}
    The first inequality is due to the diagonal property of $d$, combined with
    the fact that $T_{s - \tau} \, \bar x$ and $T_{t - \tau} \, \bar x$ both lie in $[a, b]$,
    which in turn implies that
    \begin{equation*}
        T_\tau \, a \preceq T_\tau T_{s-\tau} \,  \bar x \preceq T_\tau\, b
        \quad \text{and} \quad
        T_\tau \, a \preceq T_\tau T_{t-\tau} \, \bar x \preceq T_\tau \, b.
        \qedhere
    \end{equation*}
\end{proof}

\begin{lemma}\label{l:conv}
    There exists an $x^* \in \Xsf$ such that
    \begin{equation}\label{eq:stall}
         d(T_t \, \bar x, x^*) \to 0
         \quad \text{as} \quad
         t \to \infty.
    \end{equation}
\end{lemma}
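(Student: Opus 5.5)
The plan is to deduce \cref{l:conv} directly from \cref{l:dis} and completeness of $d$ (\cref{a:env}). \cref{l:dis} says precisely that the trajectory $t \mapsto T_t \bar x$ is Cauchy in the generalized sense appropriate to the index set $\TT$. So the argument is the standard "Cauchy implies convergent" step, done with a little care because $\TT$ may be $\RR_+$ rather than $\ZZ_+$.

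First I would restrict to integer times. Set $x_n \coloneq T_n \bar x$ for $n \in \NN$; this is legitimate because $\ZZ_+ \subset \TT$ in both cases. Given $\epsilon > 0$, take $\tau$ from \cref{l:dis}. Then $d(x_m, x_n) < \epsilon$ whenever $m, n \geq \tau$, so $(x_n)$ is a Cauchy sequence in $(\Xsf, d)$. By completeness there is an $x^* \in \Xsf$ with $d(x_n, x^*) \to 0$.

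Next I would upgrade this to convergence along the full index set. Fix $\epsilon > 0$ and take $\tau$ from \cref{l:dis} for $\epsilon/2$. Then choose an integer $n \geq \tau$ with $d(x_n, x^*) < \epsilon/2$. For any $t \in \TT$ with $t \geq \tau$, the triangle inequality gives
\begin{equation*}
    d(T_t \bar x, x^*) \leq d(T_t \bar x, T_n \bar x) + d(T_n \bar x, x^*) < \epsilon.
\end{equation*}
This yields \eqref{eq:stall}.

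There is no real obstacle here; the only point needing attention is the continuous-time case. There, "Cauchy" must first be converted into a sequential statement before completeness can be invoked, and the limit must then be transferred back to all real $t$, which the triangle-inequality step handles. Note that neither closedness of $\preceq$ nor stationarity of $x^*$ is needed at this stage; those will enter in the subsequent step showing that $x^*$ is a fixed point of each $T_t$.
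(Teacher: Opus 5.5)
Your proposal is correct and follows essentially the same route as the paper: form a Cauchy sequence along times tending to infinity via \cref{l:dis}, invoke completeness to get $x^*$, then extend to all $t \in \TT$ with the triangle inequality. The only difference is that the paper uses an arbitrary increasing sequence $(t_n)$ in $\TT$ with $t_n \to \infty$, whereas you take $t_n = n$.
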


\begin{proof}
    Let $(t_n)_{n \in \NN}$ be an increasing sequence in $\TT$ with $t_n \to \infty$ as $n \to \infty$.
    Consider the sequence defined by $x_n \coloneq T_{t_n} \bar x$.  This sequence is
    Cauchy because, given $\epsilon > 0$, we can choose $\tau \in \TT$ such that
    \eqref{eq:dis} holds and then $d(x_n, x_m) < \epsilon$ for all $n, m$ such
    that $t_n, t_m \geq \tau$.  Since $\Xsf$ is complete, it follows that $(x_n)$
    converges to a limit $x^*$.  That is,
    \begin{equation}\label{eq:cass}
         d(T_{t_n} \, \bar x, x^*) \to 0
         \quad \text{as} \quad
         n \to \infty.
    \end{equation}
    We claim that this can be extended to \eqref{eq:stall}.
    To see this, fix $\epsilon > 0$ and choose $\tau$ as in \eqref{eq:dis}. For $t
    \geq \tau$ and $n \in \NN$ such that $t_n \geq \tau$, we have
    \begin{equation*}
        d(T_t \, \bar x, x^*)
        \leq d(T_t \, \bar x, T_{t_n} \, \bar x) + d(T_{t_n} \, \bar x, x^*)
         < \epsilon + d(T_{t_n} \, \bar x, x^*)
    \end{equation*}
    Applying \eqref{eq:cass} and taking the limit in $n$ completes the argument.
\end{proof}

\begin{lemma}\label{l:fpfxs}
    The point $x^*$ in \cref{l:conv} is stationary for $(T_t)$.
\end{lemma}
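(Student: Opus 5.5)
The plan is to use \cref{a:env2} (closedness of $\preceq$) in place of any continuity of $T_s$. The idea is to trap $x^*$ in order intervals that the semigroup shrinks, and then exploit the diagonal property.

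\textbf{Step 1 (order intervals containing $x^*$).} Fix $\tau \in \TT$. For every $t \geq \tau$ we have $T_{t-\tau}\bar x \in [a,b]$ by \eqref{eq:bt}. Since $T_\tau$ is order-preserving, this gives
\begin{equation*}
    T_\tau a \preceq T_t \bar x \preceq T_\tau b
    \qquad \text{for all } t \geq \tau .
\end{equation*}
Take a sequence $t_n \to \infty$ with $t_n \ge \tau$. By \cref{l:conv}, $T_{t_n}\bar x \to x^*$. Apply the closedness of $\preceq$ twice: once to the pair consisting of the constant sequence $T_\tau a$ and the sequence $T_{t_n}\bar x$, and once to the pair $T_{t_n}\bar x$ and the constant sequence $T_\tau b$. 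This yields $x^* \in [T_\tau a, T_\tau b]$ for every $\tau \in \TT$.

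\textbf{Step 2 (push forward by $T_s$).} Fix $s \in \TT$. Since $T_s$ is order-preserving and $T_s T_\tau = T_{s+\tau}$, Step 1 gives $T_s x^* \in [T_{s+\tau} a, T_{s+\tau} b]$ for every $\tau$. By the display in Step 1 with $t = s+\tau \ge \tau$, after applying $T_s$, we also have $T_{s+\tau}\bar x \in [T_{s+\tau} a, T_{s+\tau} b]$. The diagonal property from \cref{a:env} then yields
\begin{equation*}
    d(T_s x^*, T_{s+\tau}\bar x) \leq d(T_{s+\tau} a, T_{s+\tau} b).
\end{equation*}

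\textbf{Step 3 (conclude).} By the triangle inequality,
\begin{equation*}
    d(T_s x^*, x^*) \leq d(T_{s+\tau} a, T_{s+\tau} b) + d(T_{s+\tau}\bar x, x^*).
\end{equation*}
Let $\tau \to \infty$. The first term vanishes by asymptotic contractivity, and the second vanishes by \cref{l:conv}. Hence $T_s x^* = x^*$ for every $s \in \TT$, so $x^*$ is stationary.

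The only delicate point is Step 1: passing the order bounds to the limit $x^*$ without assuming continuity of $T_\tau$. This is exactly where closedness of the preorder is used, applied with constant sequences on one side of each inequality.
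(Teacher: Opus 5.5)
Your proof is correct and follows essentially the paper's route. Closedness of $\preceq$ traps $x^*$ in $[T_\tau a, T_\tau b]$ for every $\tau$; pushing forward by $T_s$ traps $T_s x^*$ in $[T_{s+\tau} a, T_{s+\tau} b]$; the diagonal property plus asymptotic contractivity then finish. The one difference is that the paper applies the diagonal property directly to the pair $x^*, T_s x^*$, since your Step 1 with $\tau$ replaced by $s+\tau$ already gives $x^* \in [T_{s+\tau} a, T_{s+\tau} b]$. That makes your detour through $T_{s+\tau}\bar x$ and the triangle inequality unnecessary. Separately, your stated justification for $T_{s+\tau}\bar x \in [T_{s+\tau} a, T_{s+\tau} b]$ has an index slip: taking $t = s+\tau$ and then applying $T_s$ lands on $T_{2s+\tau}\bar x$. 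The claim itself holds by simply applying $T_{s+\tau}$ to $\bar x \in [a,b]$.
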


\begin{proof}
    Fix $s, t \in \TT$. From the existence of an order-bounded trajectory we have $a \preceq
    T_t \, \bar x \preceq b$. From the order-preserving property
    of $T_s$ we get $T_s \, a \preceq T_{t + s} \, \bar x \preceq T_s \, b$.
    Since $\preceq$ is closed, taking the limit with respect to $t$ and applying \cref{l:conv} yields
    \begin{equation}\label{eq:sta}
        T_s \, a \preceq x^* \preceq T_s \, b
        \quad \text{for all } \; s \in \TT.
    \end{equation}
    In addition, from \eqref{eq:sta} and the order-preserving property
    of $T_t$,
    \begin{equation}\label{eq:stab}
        T_{u + t} \, a \preceq T_t \, x^* \preceq T_{u + t} \, b
        \quad \text{for all } \; u, t \in \TT.
    \end{equation}
    Combining \eqref{eq:sta} and \eqref{eq:stab} with the diagonal property
    \eqref{eq:diag}, we get
    \begin{equation}\label{eq:fxss}
        d(x^*, T_t \, x^*) \leq d(T_{u+t} \, a, T_{u+t} \, b)
        \quad \text{for all } \; u \in \TT.
    \end{equation}
    Taking the limit in $u$ and using asymptotic contractivity yields
    $T_t \, x^* = x^*$.
\end{proof}

\begin{proof}[Proof of \cref{t:bk}]
    We have already shown that $x^*$ is stationary for $(T_t)$ in
    \cref{l:fpfxs}.  Regarding convergence, fix $x \in \Xsf$.  Since $(T_t)$
    is asymptotically contractive and $x^*$ is stationary, we have
    $d(T_t \, x, x^*) = d(T_t \, x, T_t \, x^*) \to 0$ as $t \to \infty$.
    This concludes the proof of \cref{t:bk}.
\end{proof}

\subsection{Proof of \texorpdfstring{\cref{t:bk2}}{Theorem \ref{t:bk2}}}

We only used closedness of $\preceq$ in \cref{l:fpfxs}, when we
proved that the limit point $x^*$ of the order-bounded trajectory $(T_t \,
\bar x)$ is a fixed point of each $T_t$.  We now prove this without
closedness by fixing $t \in \TT$ and applying the triangle inequality to
obtain
\begin{equation*}
    d(x^*, T_t \, x^*) \leq d(x^*, T_{u+t} \, \bar x) + d(T_{u+t} \, \bar x, T_t
    \, x^*).
\end{equation*}
The first term converges to zero in $u$ by \cref{l:conv}.  If we can
show that the second term also converges to zero in $u$ then we are done.
This holds because $T_t$ is continuous and $T_u \, \bar x \to x^*$, so, as $u
\to \infty$,
\begin{equation*}
    d(T_{u+t} \, \bar x, T_t \, x^*)
    = d(T_t T_u \, \bar x, T_t \, x^*)
    \to
    d(T_t \, x^*, T_t \, x^*) = 0.
\end{equation*}

\subsection{Proof of \texorpdfstring{\cref{t:act}}{Theorem \ref{t:act}}}

Let the conditions of \cref{t:act} hold. Because $S$ is a
partially ordered Polish space and \cref{a:pos} is in force, $(\pP(S), \beta)$
is a complete metric space \citep[Theorem~4.1]{kamihigashi2019unified}
and $\precsd$ is closed \citep{kamae1978stochastic}.
We begin with the following lemma.

\begin{lemma}\label{l:tt}
    If $\Lambda \subset \pP(S)$ is tight, then there exists an increasing sequence of order
    intervals $(I_n)_{n \in \Z_+}$ such that
    $\sup_{\phi \in \Lambda} \phi(I_n^c) \to 0$ as $n \to \infty$.
\end{lemma}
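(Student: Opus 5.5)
Since $\Lambda$ is tight, for each $n \in \ZZ_+$ we first choose a compact set $K_n \subset S$ with $\sup_{\phi \in \Lambda} \phi(K_n^c) \leq 2^{-n}$. By \cref{l:cobeq}, every compact subset of $S$ is order-bounded, so each $K_n$ lies in some order interval $[a_n, b_n]$. The intervals $[a_n, b_n]$ need not be nested, so we enlarge them inductively to obtain an increasing sequence while keeping each one an order interval.

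The inductive step runs as follows. Put $I_0 \coloneq [a_0, b_0]$. Given $I_n = [\ell_n, u_n]$, the set $\{\ell_n, u_n\} \cup K_{n+1}$ is a finite union of compact sets, hence compact. \cref{l:cobeq} then gives $\ell_{n+1}, u_{n+1} \in S$ with
\begin{equation*}
    \{\ell_n, u_n\} \cup K_{n+1} \subset [\ell_{n+1}, u_{n+1}] \eqcolon I_{n+1}.
\end{equation*}
We have $\ell_{n+1} \leq \ell_n$ and $u_n \leq u_{n+1}$, so transitivity gives $I_n \subset I_{n+1}$. Also $K_{n+1} \subset I_{n+1}$ by construction, and $K_0 \subset I_0$. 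Hence $(I_n)$ is an increasing sequence of order intervals with $K_n \subset I_n$ for every $n$.

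The conclusion then follows directly. Order intervals are closed because $\leq$ is closed, so each $I_n$ is Borel and $\phi(I_n^c)$ is well defined. For every $\phi \in \Lambda$ we have $\phi(I_n^c) \leq \phi(K_n^c) \leq 2^{-n}$, so $\sup_{\phi \in \Lambda} \phi(I_n^c) \leq 2^{-n} \to 0$.

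No step here is a serious obstacle. The only point needing care is the enlargement step: we have to cover the two endpoints $\ell_n, u_n$ and not only the previous compact set. Enclosing the previous interval this way is what forces the sequence to be nested, and it uses nothing beyond transitivity of $\leq$ and the compact-implies-order-bounded direction of \cref{l:cobeq}.
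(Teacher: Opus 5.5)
Your proof is correct and follows essentially the same route as the paper: choose compacts $K_n$ with uniformly small exterior mass, enclose each in an order interval using the compact-implies-order-bounded half of \cref{a:pos}, and enlarge inductively so that the intervals are nested. The only difference is cosmetic. You enclose the endpoints $\{\ell_n, u_n\}$ of the previous interval together with $K_{n+1}$, whereas the paper encloses $I_n \cup K_{n+1}$ directly; your version makes the nesting and the monotonicity of the endpoints explicit without appealing to compactness of $I_n$.
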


\begin{proof}
    Let $(\epsilon_n)$ be a positive real sequence with $\epsilon_n \downarrow
    0$.  By tightness, we can choose compact set $K_0$ with $\sup_{\phi \in
    \Lambda} \phi(K_0^c) \leq \epsilon_0$.  By \cref{a:pos}, compact sets are order
    bounded, so we
    can also take an order interval $I_0$ containing $K_0$ with $\sup_{\phi \in
    \Lambda} \phi(I_0^c) \leq \epsilon_0$. Next, we choose a compact set $K_1$ with
    $\sup_{\phi \in \Lambda} \phi(K_1^c) \leq \epsilon_1$, and then, using
    \cref{a:pos} again, an order interval $I_1$ such that $I_0 \cup
    K_1 \subset I_1$.  This order interval obeys $I_0 \subset I_1$ and
    $\sup_{\phi \in \Lambda} \phi(I_1^c) \leq \epsilon_1$.  Continuing in this way
    produces an increasing sequence of order intervals $(I_n)_{n \in \Z_+}$
    with $\sup_{\phi \in \Lambda} \phi(I_n^c) \to 0$.
\end{proof}

\begin{proof}[Proof of \cref{p:tiob}]
    Suppose first that $\Lambda$ is tight.  We show that there exists a pair of
    distributions $\ell, u \in \pP(S)$ such that $\ell \preceq \phi \preceq u$ for
    all $\phi \in \Lambda$.   By \cref{l:tt}, we can take an increasing sequence of order
    intervals $(I_n)_{n \in \Z_+}$ and a positive real sequence
    $(\epsilon_n)$ with $\epsilon_n \downarrow 0$  and $\sup_{\phi \in \Lambda} \phi(I_n^c)
    \leq \epsilon_n$ for all $n \in \Z_+$.
    Let $I_n = [x_n, y_n]$. Since $(I_n)$ is increasing, the sequence
    $(x_n)$ is decreasing and $(y_n)$ is increasing.
    For the distributions $\ell$ and $u$, we set
    \begin{equation*}
        \ell \coloneq \sum_{n=0}^\infty \alpha_n \delta_{x_n}
        \quad \text{and} \quad
        u \coloneq \sum_{n=0}^\infty \alpha_n \delta_{y_n},
        \quad \text{where} \quad
        \alpha_n \coloneq \epsilon_{n-1} - \epsilon_n.
    \end{equation*}
    In the definition above, $\epsilon_{-1} \coloneq 1$ and $\delta_z$ is the
    probability measure concentrated on $z$.  One easily confirms that both
    $\ell$ and $u$ are elements of $\pP(S)$.

    Now let $J$ be any increasing set in $\bB$. We claim that $\phi(J) \leq u(J)$ for all
    $\phi \in \Lambda$.  To see that this is so, fix $\phi \in \Lambda$ and suppose
    first that $y_m \in J^c$ for all $m$.  If this is so, then $\phi(J) = 0$.
    Indeed, fixing $m \in \Z_+$, the property $y_m \in J^c$ implies $J \subset
    I^c_m$. (Otherwise there is an $x \in J \cap I_m$ and, since $J$ is
    increasing and $x \preceq y_m$, we have $y_m \in J$.)  This means
    that $\phi(J) \leq \phi (I^c_m) \leq \epsilon_m$.  Taking the limit in $m$
    yields $\phi(J) = 0$.  Hence $\phi(J) \leq u(J)$.

    Now consider the other case, where there exists an integer $m$ satisfying
    \begin{equation*}
        m = \min\{n \in \Z_+ \, : \, y_n \in J\}.
    \end{equation*}
    Observe that $J \cap I_{m-1}$ is empty. (If $x$ is in both, then,
    since $x \leq y_{m-1}$ and $x \in J$, we have $y_{m-1} \in J$, which
    contradicts the definition of $m$.)  As a result,
    \begin{equation*}
        \phi(J)
        = \phi(J \cap I_{m-1}) + \phi(J \cap I^c_{m-1})
        = \phi(J \cap I^c_{m-1})
        \leq \phi(I^c_{m-1})
        \leq \epsilon_{m-1}.
    \end{equation*}
    At the same time, since $(y_n)$ is increasing,
    \begin{equation*}
        u(J)
        = \epsilon_{m-1} - \epsilon_m
            + \epsilon_m - \epsilon_{m+1}
            + \epsilon_{m+1} - \epsilon_{m+2}
            + \cdots
        = \epsilon_{m-1}.
    \end{equation*}
    As a consequence, we have $\phi(J) \leq u(J)$.  This shows that $\phi
    \precsd u$.  A similar argument shows that $\ell \precsd \phi$ also holds.
    Hence $\Lambda$ is order bounded.

    Conversely, suppose $\Lambda$ is order bounded, so there exist $\ell, u \in
    \pP(S)$ with $\ell \precsd \phi \precsd u$ for all $\phi \in \Lambda$.  Fix
    $\epsilon > 0$.  Since $\ell$ and $u$ are distributions on a Polish space,
    they are tight, so we can choose compact sets $K_\ell$ and $K_u$ with
    $\ell(K_\ell^c) \leq \epsilon/2$ and $u(K_u^c) \leq \epsilon/2$.  By
    \cref{a:pos}, the compact set $K_\ell \cup K_u$ is order bounded, so there
    exist $a, b \in S$ with $K_\ell \cup K_u \subset [a, b]$.  By \cref{a:pos}
    again, $[a, b]$ is compact.

    Let $U_a \coloneq \{x \in S : x \geq a\}$ and $D_b \coloneq \{x \in S : x
    \leq b\}$.  Then $[a,b] = U_a \cap D_b$ and $[a,b]^c = U_a^c \cup D_b^c$.
    Fix $\phi \in \Lambda$.  Since $U_a$ is increasing, we have $\ell(U_a) \leq
    \phi(U_a)$, giving $\phi(U_a^c) \leq \ell(U_a^c)$.  Similarly, since
    $D_b^c$ is increasing and $\phi \precsd u$, we have $\phi(D_b^c) \leq
    u(D_b^c)$.  Now $K_\ell \subset [a,b]$ implies $U_a^c \subset [a,b]^c
    \subset K_\ell^c$, so $\ell(U_a^c) \leq \epsilon/2$.  Likewise, $u(D_b^c)
    \leq \epsilon/2$.  Therefore,
    \begin{equation*}
        \phi([a,b]^c)
        \leq \phi(U_a^c) + \phi(D_b^c)
        \leq \ell(U_a^c) + u(D_b^c)
        \leq \epsilon.
    \end{equation*}
    Since $[a,b]$ is compact and this holds for all $\phi \in \Lambda$, we conclude
    that $\Lambda$ is tight.
\end{proof}

\begin{proof}[Proof of \cref{t:act}]
    Let the conditions of \cref{t:act} hold.

    ((ii) $\Leftrightarrow$ (iii)) This equivalence is immediate from \cref{p:tiob}.

    ((i) $\implies$ (ii))  If $(T_t)$ is globally stable, then $(T_t)$ is asymptotically
    contractive by \cref{t:bk}.  Moreover, when $\psi^*$ is the unique
    stationary distribution, the constant trajectory $(T_t \psi^*) = (\psi^*)$
    is trivially order bounded.

    ((ii) $\implies$ (i))
    Suppose that $(T_t)$ is asymptotically contractive and has an order-bounded trajectory.
    The space $(\pP(S), \beta)$ is complete and $\beta$ satisfies the diagonal
    property by \cref{l:mdi}.  Moreover, $\precsd$ is closed with respect to
    $\beta$ by \cref{l:betaclosed}.  Hence $(T_t)$ is globally stable by \cref{t:bk}.
\end{proof}

\subsection{Proof of \texorpdfstring{\cref{t:hp}}{Theorem \ref{t:hp}}}\label{ss:proofhp}

Let the conditions of \cref{t:hp} hold. We write $\lfloor \cdot \rfloor$
for the floor function.

\begin{lemma}\label{lem:bounds}
    If the conditions of \cref{t:hp} hold, then
    \begin{equation}
        (1-\epsilon)\delta_a + \epsilon \delta_{\bar{x}} \precsd P_u(a, \cdot)
        \quad \text{and} \quad
        P_u(b, \cdot) \precsd (1-\epsilon)\delta_b + \epsilon \delta_{\bar{x}}
    \end{equation}
\end{lemma}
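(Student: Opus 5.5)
The plan is to check the defining inequality of $\precsd$ directly against an arbitrary increasing test function $h \in ibS$. The only inputs are that $a$ and $b$ are the least and greatest elements of $S$ and the MMC for $P_u$ at the point $\hat x$. I read $\bar x$ in the statement as this MMC point $\hat x$, with the same $\epsilon$. No earlier structural results are needed beyond the definition of $\precsd$.

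For the first inequality, fix $h \in ibS$. Because $a$ is the least element of $S$, $h(y) \geq h(a)$ for every $y \in S$. On the increasing set $U_{\hat x}$, monotonicity gives $h(y) \geq h(\hat x)$. Splitting the integral over $U_{\hat x}$ and its complement gives
\begin{equation*}
    \int h(y)\, P_u(a, \diff y) \geq h(\hat x)\, P_u(a, U_{\hat x}) + h(a)\,\bigl(1 - P_u(a, U_{\hat x})\bigr)
    = h(a) + P_u(a, U_{\hat x})\,\bigl(h(\hat x) - h(a)\bigr).
\end{equation*}
We have $h(\hat x) - h(a) \geq 0$ because $a \leq \hat x$ and $h$ is increasing. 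We also have $P_u(a, U_{\hat x}) \geq \epsilon$ by the MMC. Hence the right-hand side is at least $h(a) + \epsilon(h(\hat x) - h(a)) = (1-\epsilon)h(a) + \epsilon h(\hat x)$. This is the integral of $h$ against $(1-\epsilon)\delta_a + \epsilon\delta_{\hat x}$. Since $h \in ibS$ was arbitrary, the first dominance relation follows.

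The second inequality is the mirror image. For $h \in ibS$, we have $h \leq h(b)$ everywhere, and $h \leq h(\hat x)$ on $D_{\hat x}$. Therefore
\begin{equation*}
    \int h(y)\, P_u(b, \diff y) \leq h(b) - P_u(b, D_{\hat x})\,\bigl(h(b) - h(\hat x)\bigr) \leq (1-\epsilon) h(b) + \epsilon h(\hat x),
\end{equation*}
using $h(b) - h(\hat x) \geq 0$ and $P_u(b, D_{\hat x}) \geq \epsilon$.

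There is no real obstacle. The only points needing care are the direction of each inequality and the measurability of $U_{\hat x}$ and $D_{\hat x}$. Both sets are closed because $\leq$ is a closed order, so they are Borel. The increasing function $h$ is assumed measurable as part of $ibS$.
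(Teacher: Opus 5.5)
Your proof is correct: testing the dominance against an arbitrary $h \in ibS$, splitting over $U_{\hat x}$ (resp.\ $D_{\hat x}$), and using that $a$ and $b$ are the least and greatest elements together with the MMC bounds is the standard argument. The paper gives no argument of its own and simply points to the derivation of Eq.~(2) in Theorem~2 of \cite{hopenhayn1992stochastic}, so your version makes that step self-contained, and your reading of $\bar x$ as the MMC point $\hat x$ is the right way to interpret the notational slip in the statement.
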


\begin{proof}
    See the proof of Eq.~(2) in Theorem~2 of \cite{hopenhayn1992stochastic}.
\end{proof}

\begin{lemma}\label{lem:contraction}
    If the conditions of \cref{t:hp} hold, then
    \begin{equation}
        \beta(\phi P_u^n, \psi P_u^n) \leq 2(1-\epsilon)^n
        \quad \text{for all  $\phi, \psi \in \pP(S)$ and $n \in \mathbb{Z}_+$.}
    \end{equation}
\end{lemma}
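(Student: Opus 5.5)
We need to prove $\beta(\phi P_u^n,\psi P_u^n)\le 2(1-\epsilon)^n$. The plan is to show that the extreme trajectories started from the least and greatest elements close up geometrically, and then transfer this to arbitrary initial distributions via the diagonal property of $\beta$ (\cref{l:mdi}).

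\textbf{Reduction to the extremes.} Since $a$ and $b$ are the least and greatest elements of $S$, every $\phi\in\pP(S)$ satisfies $\delta_a\precsd\phi\precsd\delta_b$. Monotonicity of $P_u$ (and hence of $P_u^n$) then gives
\begin{equation*}
  \delta_a P_u^n \precsd \phi P_u^n \precsd \delta_b P_u^n ,
\end{equation*}
and likewise for $\psi$. By \cref{l:mdi},
\begin{equation*}
  \beta(\phi P_u^n,\psi P_u^n)\le \beta(\delta_a P_u^n,\delta_b P_u^n)=\sup_h\bigl[(P_u^n h)(b)-(P_u^n h)(a)\bigr],
\end{equation*}
where the supremum runs over $h\in ibS$ with $|h|\le1$. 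It therefore suffices to show that the oscillation $\mathrm{osc}(g):=g(b)-g(a)$ contracts by the factor $(1-\epsilon)$ under $P_u$ on the class of bounded increasing functions. Since $\mathrm{osc}(h)\le 2$ for $|h|\le 1$, the claim will then follow by induction, using that $P_u^k h$ is again increasing with $|P_u^k h|\le1$.

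\textbf{One-step contraction of the oscillation.} Fix $g\in ibS$. Applying \cref{lem:bounds}, with $\bar x$ denoting the mixing point $\hat x$ of the MMC, we obtain
\begin{align*}
  (P_u g)(a) &\ge (1-\epsilon)g(a)+\epsilon g(\hat x), \\
  (P_u g)(b) &\le (1-\epsilon)g(b)+\epsilon g(\hat x).
\end{align*}
Subtracting the first inequality from the second yields
\begin{equation*}
  \mathrm{osc}(P_u g)\le(1-\epsilon)\,\mathrm{osc}(g).
\end{equation*}
Iterating from $g=h$ gives $\mathrm{osc}(P_u^n h)\le(1-\epsilon)^n\cdot 2$, which is the desired bound.

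\textbf{Main obstacle.} The only delicate point is the content of \cref{lem:bounds}: it requires the inequality $P_u(a,\cdot)\succeq_{\rm sd}(1-\epsilon)\delta_a+\epsilon\delta_{\hat x}$. This holds because, for increasing $g$, we have $g\ge g(a)$ everywhere and $g\ge g(\hat x)$ on $U_{\hat x}$, while $P_u(a,U_{\hat x})\ge\epsilon$. Since that lemma is given, the remaining steps are routine. One should note, however, that $\hat x$ is fixed and cancels in the subtraction above, so no further properties of the mixing point are needed.
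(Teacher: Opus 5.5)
Your proof is correct and follows the paper's argument: you iterate the same one-step contraction of $(P_u^k h)(b)-(P_u^k h)(a)$ obtained from \cref{lem:bounds}, starting from the bound $2$. The only cosmetic difference is that you pass from the extreme points to general $\phi,\psi$ through the diagonal property (\cref{l:mdi}), whereas the paper integrates $(P_u^n h)(a)\le P_u^n h\le (P_u^n h)(b)$ directly, which amounts to the same inequality.
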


\begin{proof}
    We set $P = P_u$ for brevity.
    Fix $h \in ibS$ with $|h| \leq 1$. Define $d_n = P^n h(a)$ and $e_n = P^n h(b)$.
    Since $P$ is increasing and $h$ is increasing, $P^{n-1}h$ is also increasing for all $n \geq 1$.
    Applying the bounds from Lemma \ref{lem:bounds} to the increasing function
    $P^{n-1}h$ yields
    \begin{align*}
        d_n &= (P (P^{n-1}h)) (a) \geq (1-\epsilon)d_{n-1} +
            \epsilon (P^{n-1}h)(\bar{x}) \\
        e_n &= (P( P^{n-1}h))(b) \leq (1-\epsilon)e_{n-1} + \epsilon (P^{n-1}h)(\bar{x})
    \end{align*}
    Subtracting one inequality from the other, we get $e_n - d_n \leq
    (1-\epsilon)(e_{n-1} - d_{n-1})$, and hence
    \begin{equation}
        e_n - d_n \leq (1-\epsilon)^n(e_0 - d_0) =
        (1-\epsilon)^n(h(b) - h(a)) \leq 2(1-\epsilon)^n.
    \end{equation}
    Since $a$ is the bottom and $b$ is the top of $S$, we have $(P^n h)(a) \leq
    (P^n h)(x) \leq (P^n h)(b)$ for all $x \in S$, so, integrating with respect
    to $\phi \in \pP(S)$,
    \begin{equation*}
        (P^n h)(a) \leq \phi (P^n h) \leq (P^n h)(b).
    \end{equation*}
    Therefore, for any two probability measures $\phi$ and $\psi$, we have
    \begin{equation*}
        |\phi (P^n h) - \psi (P^n h) | \leq (P^n h)(b) - (P^n h)(a) \leq 2(1-\epsilon)^n.
    \end{equation*}
    Taking the supremum over all $h \in ibS$ with $|h| \leq 1$ proves the claim in the lemma.
\end{proof}

\begin{proof}[Proof of \cref{t:hp}]
    Let the stated conditions hold. We first claim that $(P_t)_{t \in \TT}$ is
    asymptotically contractive. For $\TT = [0, \infty)$, fix
    $t \in (0, \infty)$, write $t = nu + r$ where $n = \lfloor t/u \rfloor$ and
    $0 \leq r < u$. By the semigroup property, we have
    $P_t = P_{nu + r} = P_{nu} P_r = P_u^n P_r$, so
    \begin{equation*}
        \beta(\phi P_t, \psi P_t)
        = \beta(\phi P_u^n P_r, \psi P_u^n P_r)
        \leq \beta(\phi P_u^n, \psi P_u^n),
    \end{equation*}
    where the inequality is by the nonexpansiveness in \cref{lem:nonexp}.
    Applying the contraction bound in \cref{lem:contraction}, we obtain
    \begin{equation*}
        \beta(\phi P_u^n, \psi P_u^n)
        \leq 2(1-\epsilon)^n
        = 2(1-\epsilon)^{\lfloor t/u \rfloor}.
    \end{equation*}
    As $t \to \infty$, $\lfloor t/u \rfloor \to \infty$, so $(P_t)_{t \geq 0}$
    is asymptotically contractive, as claimed.

    For $\TT = \ZZ_+$, the proof is similar.  We write $t = nu + r$ where
    $n = \lfloor t/u \rfloor$ and $r$ is an integer obeying $0 \leq r < u$.
    Since $P_t = P_u^n P_r$, we can again use \cref{lem:nonexp,lem:contraction}
    to obtain
    \begin{equation*}
    \beta(\phi P_t, \psi P_t) \leq \beta(\phi P_u^n, \psi P_u^n) \leq 2(1-\epsilon)^n \to 0
    \end{equation*}
    as $t \to \infty$. Hence $(P_t)_{t \in \ZZ_+}$ is asymptotically contractive.

    To complete the proof of global stability, we use \cref{t:act}. For any
    increasing $h \in ibS$ and any $\phi \in \pP(S)$, we have $h(a) \leq \phi(h)
    \leq h(b)$, and hence $\delta_a \precsd \phi \precsd \delta_b$. In
    particular, for any $\phi \in \pP(S)$ and any $t \in \TT$, we have $\delta_a
    \precsd \phi P_t \precsd \delta_b$. Thus every trajectory of $(P_t)$ is
    order bounded. Since $(P_t)$ is asymptotically contractive, increasing, and
    has an order-bounded trajectory, \cref{t:act} implies that $(P_t)$ is
    globally stable.

    For the exponential bound \eqref{eq:exp_conv}, let $\phi^*$ be the unique
    fixed point. Since $\lfloor t/u \rfloor \geq t/u - 1$, we have
    $(1-\epsilon)^{\lfloor t/u \rfloor} \leq (1-\epsilon)^{t/u - 1}$.
    Writing $(1-\epsilon)^{t/u} = e^{-\alpha t}$ where $\alpha = \ln(1/(1-\epsilon))/u > 0$,
    we obtain
    \begin{equation*}
        \beta(\phi P_t, \psi P_t) \leq \frac{2}{1-\epsilon} e^{-\alpha t}.
    \end{equation*}
    Setting $\psi = \phi^*$ and using $\phi^* P_t = \phi^*$ yields the bound in
    \eqref{eq:exp_conv}.

    The monotone ergodicity result in \cref{t:hp} follows from Theorem~3.1 and
    Proposition~4.1 of \cite{kamihigashi2016seeking}.
\end{proof}

\subsection{Order-Theoretic Mixing and Asymptotic Contractivity}

In this section we explore the connection between order mixing 
properties and asymptotic contractivity in the Bhattacharya metric.
Here is one key result.

\begin{proposition}\label{p:orwomnew}
    Let $(P_t)$ be an increasing transition probability function on $S$.
    If $(P_t)$ is weakly order mixing, then $(P_t)$ is
    asymptotically contractive on $\pP(S)$ with respect to the metric $\beta$.
\end{proposition}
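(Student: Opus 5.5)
Reduce to point masses first, then handle those with a coupling argument. For arbitrary $\phi,\psi \in \pP(S)$ and $h \in ibS$ with $|h| \le 1$, duality \eqref{eq:dual} gives
\begin{equation*}
    |(\phi P_t)(h) - (\psi P_t)(h)| \le \int\!\!\int |P_t h(x) - P_t h(x')| \, \phi(\diff x)\, \psi(\diff x').
\end{equation*}
If we can show $\beta(\delta_x P_t, \delta_{x'} P_t) \to 0$ for every pair $(x,x')$, dominated convergence finishes the argument. There is one subtlety: the supremum over $h$ must be taken before integrating. We therefore bound the integrand by $g_t(x,x') \coloneq \beta(P_t(x,\cdot), P_t(x',\cdot)) \le 2$. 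This function is measurable, because the supremum can be restricted to a countable family (indicators of increasing sets determine $\precsd$-type distances on a Polish space), or it can be handled via an outer integral. Hence $\beta(\phi P_t,\psi P_t) \le \int\!\!\int g_t \,\diff\phi\,\diff\psi \to 0$.

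For fixed $x, x'$, I will show the one-sided bound $\limsup_t \sup_h [P_t h(x) - P_t h(x')] \le 0$, and then obtain the reverse inequality by swapping the roles of $x$ and $x'$. Use the process $((X_t),(X_t'))$ from weak order mixing with $X_0 = x$, $X_0' = x'$, and let $\tau$ be the stopping time. Then
\begin{equation*}
    P_t h(x) - P_t h(x') = \EE[h(X_t) - h(X_t')] = \EE[(h(X_t) - h(X_t'))\mathbbm{1}\{\tau \le t\}] + \EE[(h(X_t) - h(X_t'))\mathbbm{1}\{\tau > t\}].
\end{equation*}
The second term is at most $2\PP\{\tau > t\}$, which tends to $0$ uniformly in $h$ by \eqref{eq:omb}. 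For the first term, condition on $\fF_\tau$ and apply the strong Markov property to each marginal:
\begin{equation*}
    \EE[h(X_t)\mid\fF_\tau] = (P_{t-\tau} h)(X_\tau) \quad \text{on } \{\tau \le t\},
\end{equation*}
and similarly for $X'$. Since $P_{t-\tau}h$ is increasing, the inequality $X_\tau \le X'_\tau$ makes this difference nonpositive, so the first term is $\le 0$.

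Getting this step right is the main obstacle. First, I must check that $X_\tau \le X_\tau'$ actually holds at the infimum; this needs right-continuity of paths together with closedness of $\{y \le y'\}$, or else a discretized approximation of $\tau$ from above. Second, the strong Markov property must be applied to each coordinate with respect to the joint filtration. The definition places strong Markov on the joint process, while the marginals are $(P_t)$-Markov; I would use the joint property together with the fact that each marginal transition law is $P_t$, so that the conditional law of $X_{\tau+s}$ given $\fF_\tau$ is $P_s(X_\tau,\cdot)$. Third, the time shift $t-\tau$ is random, so the conditional expectation has to be written as $\EE[\mathbbm{1}\{\tau\le t\}((P_{t-\tau}h)(X_\tau) - (P_{t-\tau}h)(X'_\tau))]$, with joint measurability of $(s,y)\mapsto P_s h(y)$. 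If that measurability is problematic in continuous time, I would approximate $\tau$ by dyadic stopping times $\tau_k \downarrow \tau$. At each $\tau_k$ the ordering need not hold, so the cleaner route is to rely on path regularity, or to note that $X_\tau \le X'_\tau$ itself is part of the assumption implicit in the definition.

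Combining the two parts gives $\sup_h [P_t h(x) - P_t h(x')] \le 2\PP_{x,x'}\{\tau > t\}$. For the reverse direction, the definition of weak order mixing applied to the pair $(x',x)$ yields a process in which the ordering eventually goes the other way. This gives $g_t(x,x') \le 2\max(\PP_{x,x'}\{\tau>t\}, \PP_{x',x}\{\tau>t\}) \to 0$, and the integration step then completes the proof.
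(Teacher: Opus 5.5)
Your proposal is correct and follows essentially the same route as the paper. It uses the split on $\{\tau\le t\}$ versus $\{\tau>t\}$, strong Markov plus monotonicity to make the first term nonpositive, and the swap $(x,x')\to(x',x)$ to get the two-sided bound $2\max(\PP_{x,x'}\{\tau>t\},\PP_{x',x}\{\tau>t\})$. It then integrates against a coupling of $\phi,\psi$ and applies dominated convergence. One small simplification: you do not need measurability of $g_t$, and the countable-family justification you sketch would itself need proof. For each fixed $h$ the integrand is already bounded by the majorant $2\max(\PP_{x,x'}\{\tau>t\},\PP_{x',x}\{\tau>t\})$, which is measurable if $(x,x')\mapsto\PP_{x,x'}$ is a kernel, as both you and the paper implicitly assume; you can integrate that and take the supremum over $h$ afterwards.
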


\begin{proof}[Proof of \cref{p:orwomnew}]
    Fix $x, x' \in S$ and $h \in ibS$ with $|h| \leq 1$.  Let $((X_t),
    (X_t'))$ be as in the definition of weakly order mixing and $\tau$ be as
    defined in \eqref{eq:omb}. We have
    \begin{equation*}
        \delta_x P_t h - \delta_{x'} P_t h
        = \EE_x  h(X_t) - \EE_{x'} h(X_t')
        = \EE_x \EE_\tau h(X_t) - \EE_{x'} \EE_\tau h(X_t')
    \end{equation*}
    By construction, $(X_t)$ and $(X_t')$ are defined on the same probability
    space, so we can rewrite this as
    \begin{equation*}
        \delta_x P_t h - \delta_{x'} P_t h
        = \EE_{x,x'} [\EE_\tau h(X_t) - \EE_\tau h(X_t')]
        = A + B,
    \end{equation*}
    where
    \begin{align*}
        A & \coloneq \EE_{x,x'} [\EE_\tau h(X_t) - \EE_\tau h(X_t')] \{\tau \leq t\}
        \\
        B & \coloneq \EE_{x,x'} [\EE_\tau h(X_t) - \EE_\tau h(X_t')] \{\tau > t\}.
    \end{align*}
    Using the strong Markov property and the fact that $\tau$ is a finite
    stopping time, we can write $A$ as
    \begin{equation*}
        A = \EE_{x,x'} [(P_{t-\tau}h)(X_\tau) - (P_{t-\tau} h)(X_\tau')] \{\tau \leq t\}
    \end{equation*}
    Since $h$ is increasing, $X_\tau \leq X_\tau'$ and $(P_t)$ is increasing, we
    must have $A \leq 0$.  Moreover,
    \begin{equation*}
        B \leq \EE_{x,x'} |\EE_\tau h(X_t) - \EE_\tau h(X_t')| \{\tau > t\}
        \leq 2 \PP_{x,x'} \{\tau > t\}.
    \end{equation*}
    As a result,
    \begin{equation*}
        \delta_x P_t h - \delta_{x'} P_t h \leq 2 \PP_{x,x'} \{\tau > t\}.
    \end{equation*}
    Since the one-sided bound holds for all $x, x' \in S$, applying it
    to the pair $(x', x)$ gives $\delta_{x'} P_t h - \delta_x P_t h
    \leq 2 \PP_{x',x} \{\tau > t\}$, and hence
    \begin{equation*}
        |\delta_x P_t h - \delta_{x'} P_t h| \leq 2 \max(\PP_{x,x'} \{\tau > t\},\, \PP_{x',x} \{\tau > t\}).
    \end{equation*}
    Since $h \in ibS$ with $|h| \leq 1$ was arbitrary, we can take the supremum to get
    \begin{equation}\label{eq:acd}
        \beta(\delta_x P_t , \delta_{x'} P_t) \leq 2 \max(\PP_{x,x'} \{\tau > t\},\, \PP_{x',x} \{\tau > t\})
        \quad \text{for all } x,x' \in S.
    \end{equation}
    We wish to extend this to arbitrary (nondegenerate) initial conditions.
    To this end, fix $\phi, \psi \in \pP(S)$. Let $(X, X')$ be any coupling with marginals
    $\phi$ and $\psi$. For $h \in ibS$ with $|h| \leq 1$, we have
    \begin{equation*}
        (\phi P_t)(h) - (\psi P_t)(h)
        = \EE[(P_t h)(X)] - \EE[(P_t h)(X')]
        = \EE[(\delta_X P_t)(h) - (\delta_{X'} P_t)(h)].
    \end{equation*}
    Taking absolute values and using \eqref{eq:acd}, we get
    \begin{equation*}
        |(\phi P_t)(h) - (\psi P_t)(h)| \leq \EE[\beta(\delta_X P_t, \delta_{X'} P_t)].
    \end{equation*}
    The right-hand side does not depend on $h$, so taking the supremum over $h \in ibS$ with $|h| \leq 1$ gives
    \begin{equation*}
        \beta(\phi P_t, \psi P_t) \leq \EE[\beta(\delta_X P_t, \delta_{X'} P_t)].
    \end{equation*}
    By \eqref{eq:acd},
    \begin{equation}\label{eq:wombd}
        \beta(\phi P_t, \psi P_t) \leq 2\EE[\max(\PP_{X,X'}\{\tau > t\},\, \PP_{X',X}\{\tau > t\})].
    \end{equation}
    Since $\PP_{x,x'}\{\tau < \infty\} = 1$ by \eqref{eq:omb} for all $x, x' \in S$,
    both $\PP_{x,x'}\{\tau > t\}$ and $\PP_{x',x}\{\tau > t\}$ converge to $0$
    as $t \to \infty$ for each $(x,x')$.
    By the dominated convergence theorem, the right-hand side of
    \eqref{eq:wombd} converges to $0$.
    Since $\phi, \psi \in \pP(S)$ were arbitrary, $(P_t)$ is asymptotically
    contractive.
\end{proof}

\subsection{Proof of 
    \texorpdfstring{\cref{t:orgs}}{Theorem \ref{t:orgs}} and
    \texorpdfstring{\cref{t:orgsnew}}{Theorem \ref{t:orgsnew}}
}

First we prove the discrete time result in \cref{t:orgs}.

\begin{proof}[Proof of \cref{t:orgs}]
    Let $P$ be any stochastic kernel on $S$.
    If $P$ is globally stable on $\pP(S)$, then $P$ is bounded in probability, 
    since the sequence of distributions $t \mapsto \delta_x P^t = P^t(x, \cdot)$
    is convergent and hence tight. (By Prokhorov's theorem, every weakly convergent sequence of
    probability measures on a Polish space is tight.)

    Now suppose that $P$ is increasing, order reversing, and bounded in
    probability. By Lemma~A.5 from
    \cite{kamihigashi2014stochastic}, there exists
    an $(S \times S)$-valued Markov process $((X_t), (X_t'))_{t \in \ZZ_+}$ on 
    a probability space $(\Omega, \fF, \PP)$ such that 
    $(X_t)_{t \in \ZZ_+}$ and $(X_t')_{t \in \ZZ_+}$ are independent, both of
    these processes are $P$-Markov, and
    the stopping time $\tau \coloneq \inf \, \{t \geq 0 : X_t \leq X_t'\}$
    obeys $\PP_{x,x'} \{ \tau < \infty\} = 1$ for all $x, x' \in S$.
    Since $\preceq$ is closed, $\{(x,x') \in S \times S : x \leq x'\}$ is
    measurable, so $\tau$ is a stopping time.  The strong Markov property always holds
    in discrete time (see, e.g., \cite{meyn2009markov}).  As a result,
    the associated discrete time semigroup is weakly order mixing. 
    \cref{p:orwomnew} now implies that $(P_t)_{t \in \ZZ_+}$
    is asymptotically contractive on $\pP(S)$ with respect to the metric $\beta$.
    By definition, this means that the operator $P$ is asymptotically contractive on $\pP(S)$.
    Global stability now follows from \cref{c:act} (iii), given that
    boundedness in probability implies existence of a tight trajectory.
\end{proof}

Now we prove the continuous time result in \cref{t:orgsnew}.

\begin{proof}[Proof of \cref{t:orgsnew}]
    Since $(P_t)$ is increasing and weakly order mixing, \cref{p:orwomnew}
    implies that $(P_t)$ is asymptotically contractive on $(\pP(S), \beta)$.
    If $(P_t)$ is globally stable, then the trajectory $t \mapsto \phi P_t$
    is convergent for every $\phi \in \pP(S)$, and hence tight (since every
    weakly convergent sequence of probability measures on a Polish space is
    tight).  Conversely, if $(P_t)$ has at least one tight trajectory, then
    global stability follows from \cref{t:act}, since $(P_t)$ is
    order-preserving and asymptotically contractive.
\end{proof}

\subsection{Proof of \texorpdfstring{\cref{t:pdmpgs}}{Theorem \ref{t:pdmpgs}}}\label{ss:pdmpgsproof}

We prove this theorem using a sequence of lemmas.

\begin{lemma}\label{l:pdmpinc}
    If \cref{ass:flow} holds, then $(P_t)$ is increasing.
\end{lemma}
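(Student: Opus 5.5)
The plan is to use a synchronous coupling. Fix $t \geq 0$ and initial states $x \leq x'$ in $S$. On the common probability space $(\Omega, \fF, \PP)$ of the construction, I build two copies of the PDMP that share the same inter-arrival times $(E_n)$ and the same shocks $(\xi_n)$, and differ only in their starting points: $Z_0 = x$ and $Z_0' = x'$. Their embedded chains are
\[
Z_{n+1} = F(\Phi(Z_n, E_{n+1}), \xi_{n+1})
\quad\text{and}\quad
Z_{n+1}' = F(\Phi(Z_n', E_{n+1}), \xi_{n+1}).
\]
Each copy is a PDMP with the given characteristics, so $X_t$ has law $P_t(x,\cdot)$ and $X_t'$ has law $P_t(x',\cdot)$.

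First, by induction on $n$, I show $Z_n \leq Z_n'$ pointwise on $\Omega$. The base case is $x \leq x'$. For the inductive step, suppose $Z_n \leq Z_n'$. Part (i) of \cref{ass:flow}, applied with time $E_{n+1}(\omega)$, gives $\Phi(Z_n, E_{n+1}) \leq \Phi(Z_n', E_{n+1})$. Part (ii), applied with $u = \xi_{n+1}(\omega)$, then gives $Z_{n+1} \leq Z_{n+1}'$.

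Second, because the jump times coincide, both copies have the same $N_t$ and $T_{N_t}$. Applying (i) once more with time $t - T_{N_t} \geq 0$ gives
\[
X_t = \Phi(Z_{N_t}, t - T_{N_t}) \leq \Phi(Z_{N_t}', t - T_{N_t}) = X_t'
\]
almost surely.

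Finally, fix $h \in ibS$. Then $(P_t h)(x) = \EE\, h(X_t) \leq \EE\, h(X_t') = (P_t h)(x')$, so $P_t(x,\cdot) \precsd P_t(x',\cdot)$. This is exactly the statement that each $P_t$ is an increasing kernel, hence that $(P_t)$ is increasing.

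The only point that needs care is the claim that the coupled copies have the correct marginal laws. This is immediate: each copy is produced by the construction of \cref{ss:pdmptheory}, with a deterministic initial condition and the same i.i.d.\ ingredients, so it is a PDMP with the given characteristics. No measurability issues arise, since every comparison above is pointwise on $\Omega$.
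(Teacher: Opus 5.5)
Your proposal is correct and matches the paper's proof essentially step for step. It uses the same synchronous coupling (shared jump times and shared shocks), the same induction on the embedded chains via \cref{ass:flow}\,(i) and (ii), and the same final application of (i) at time $t - T_{N_t}$ followed by integration against an increasing $h$. Your remark that each coupled copy has the correct marginal law is a point the paper leaves implicit.
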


\begin{proof}
    Fix $x, x' \in S$ with $x \leq x'$.  On a common probability space,
    construct two PDMPs $(X_t)$ and $(X_t')$ starting from $x$ and $x'$
    respectively, using the same jump times $(T_n)$ and the same shocks
    $(\xi_n)$.  The embedded chains satisfy $Z_0 = x \leq x' = Z_0'$.
    By induction: if $Z_n \leq Z_n'$, then $\Phi(Z_n, E_{n+1}) \leq
    \Phi(Z_n', E_{n+1})$ by \cref{ass:flow}\,(i) and
    \begin{equation*}
        Z_{n+1}
        = F(\Phi(Z_n, E_{n+1}), \xi_{n+1})
        \leq F(\Phi(Z_n', E_{n+1}), \xi_{n+1})
        = Z_{n+1}'
    \end{equation*}
    by \cref{ass:flow}\,(ii).  Hence $Z_n \leq Z_n'$ for all $n$.
    For any $t \geq 0$, applying \cref{ass:flow}\,(i) gives
    \begin{equation*}
        X_t = \Phi(Z_{N_t}, t - T_{N_t}) \leq \Phi(Z_{N_t}', t - T_{N_t}) = X_t'.
    \end{equation*}
    Therefore, for any $h \in ibS$,
    $P_t h(x) = \EE_x h(X_t) \leq \EE_{x'} h(X_t') = P_t h(x')$.
\end{proof}

\begin{lemma}\label{p:pdmpbinp}
    If \cref{ass:flow,a:pdmpm} hold, then $(P_t)$ is bounded in probability.
\end{lemma}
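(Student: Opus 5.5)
The plan is to verify tightness of $(P_t(x,\cdot))_{t \geq 0}$ for each fixed $x \in S$ directly. For each $\epsilon>0$ I will exhibit an order interval $[p,q]$, which is compact by \cref{l:cobeq}, such that $P_t(x,[p,q]^c) \leq 3\epsilon$ for all $t$. The idea is that the state at time $t$ is the flow, run for the \emph{age} $A_t \coloneq t - T_{N_t}$, of the post-jump state $Z_{N_t}$. By \cref{a:pdmpm} the post-jump state is squeezed between $f_1(\xi_{N_t})$ and $f_2(\xi_{N_t})$, so the monotonicity in \cref{ass:flow}\,(i) lets us sandwich $X_t$ between two flowed random points.

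\textbf{Step 1: choosing the parameters.} The random elements $f_1(\xi)$ and $f_2(\xi)$, with $\xi \sim \mu$, have laws in $\pP(S)$, and each such law is tight. So there are compact sets, and hence by \cref{a:pos} a single order interval $[c,d]$, with $\PP\{f_1(\xi)\in[c,d],\ f_2(\xi)\in[c,d]\} \geq 1-2\epsilon$. Next choose $s_0$ with $e^{-\lambda s_0} < \epsilon$. Since $\Phi$ is continuous, the set
\begin{equation*}
    C \coloneq \Phi(\{c,d,x\}\times[0,s_0])
\end{equation*}
is compact. By \cref{a:pos} it lies in some order interval $[p,q]$, which is compact by \cref{l:cobeq}.

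\textbf{Step 2: the case with at least one jump and small age.} Suppose $N_t = n \geq 1$. Then $Z_n = F(\Phi(Z_{n-1},E_n),\xi_n)$, so $f_1(\xi_n) \leq Z_n \leq f_2(\xi_n)$. Applying \cref{ass:flow}\,(i) gives
\begin{equation*}
    \Phi(f_1(\xi_n),A_t) \leq X_t \leq \Phi(f_2(\xi_n),A_t).
\end{equation*}
On the further event that $f_i(\xi_n)\in[c,d]$ for both $i$ and $A_t\le s_0$, monotonicity of $\Phi(\cdot,A_t)$ yields
\begin{equation*}
    p \leq \Phi(c,A_t) \leq X_t \leq \Phi(d,A_t) \leq q.
\end{equation*}

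\textbf{Step 3: bounding the exceptional events.} Three kinds of events remain. First, if $N_t=0$ and $t\le s_0$, then $X_t=\Phi(x,t)\in C\subset[p,q]$, so nothing is lost. Second, the event $\{A_t>s_0\}$ has probability at most $e^{-\lambda s_0}<\epsilon$. It is contained in the event that there is no jump in $(t-s_0,t]$, where this interval is intersected with $[0,t]$ when $t<s_0$, in which case the event is empty unless $N_t=0$, already treated. This same bound also covers $\{N_t=0,\ t>s_0\}$. Third, the shock event must be handled: $\PP\{N_t\ge1,\ f_i(\xi_{N_t})\notin[c,d]\text{ for some } i\} \leq 2\epsilon$. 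Summing, $P_t(x,[p,q]^c)\le 3\epsilon$ uniformly in $t$, which gives tightness.

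The main obstacle is this last shock bound, because the index $N_t$ is random. The plan is to condition on the whole jump-time sequence $(E_n)$. This sequence determines $N_t$ and $A_t$, and it is independent of the iid shocks $(\xi_n)$. Conditional on $N_t=n$, the shock $\xi_n$ therefore still has law $\mu$. Summing over $n\ge1$ then gives the bound $2\epsilon$.
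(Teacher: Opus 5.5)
Your proof is correct and uses the same decomposition as the paper's: no jump yet, age since the last jump too large, and a post-jump shock landing outside an order interval, with the shock term controlled by the independence of $(\xi_n)$ from the jump times. The only variation is how you place $X_t$ in a compact set. You sandwich it between $\Phi(c,A_t)$ and $\Phi(d,A_t)$ using \cref{ass:flow}\,(i), whereas the paper takes the continuous image $\Phi(C\times[0,M])$ of the whole order interval, which needs joint continuity of $\Phi$ but not flow monotonicity at this step.
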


The idea behind the proof is that the pathwise bounds in \cref{a:pdmpm} prevent
the post-jump states from escaping to infinity, while the inherently exponential
inter-arrival times prevent the deterministic flow from carrying the process too
far between jumps.

\begin{proof}
    Fix $x_0 \in S$ and $\varepsilon > 0$.  We need to find a compact set $D$
    such that $\PP_{x_0}\{X_t \in D\} \geq 1 - \varepsilon$ for all $t \geq 0$.
    By \cref{a:pdmpm},
    $f_1(\xi_n) \leq Z_n \leq f_2(\xi_n)$ for all $n \geq 1$, so each post-jump
    state lies in $[f_1(\xi_n), f_2(\xi_n)]$. Choose a compact $K \subset S$ with
    $\PP\{f_i(\xi) \in K\} \geq 1 - \varepsilon/6$ for $i = 1, 2$.  By
    \cref{a:pos}, $K \subset [a, b]$ for some $a, b \in S$, and $C \coloneq [a,
    b]$ is compact.  Whenever both $f_1(\xi_n)$ and $f_2(\xi_n)$ fall in $K$, the
    post-jump state $Z_n$ lies in $K \subset C$.

    Next, we show that $Z_{N_t}$, the state at the most recent jump before time
    $t$, lies in $C$ with high probability.  Formally, $\xi_n$ is independent of $(E_1, \ldots,
    E_{n+1})$ and $\{N_t = n\}$ is determined by $(E_1, \ldots, E_{n+1})$, so
    \begin{equation*}
        \PP_{x_0}\{Z_{N_t} \notin C, \, N_t \geq 1\}
        = \sum_{n=1}^{\infty} \PP\{N_t = n, \, Z_n \notin C\}
        \leq \frac{\varepsilon}{3}\sum_{n=1}^{\infty} \PP\{N_t = n\}
        = \frac{\varepsilon}{3}.
    \end{equation*}
    Between
    consecutive jumps, the process follows the flow $\Phi$, so $X_t =
    \Phi(Z_{N_t}, R_t)$ where $R_t \coloneq t - T_{N_t}$ is the time elapsed
    since the last jump.
    The event $\{R_t > M\}$ requires no Poisson arrivals in the interval
    $(t-M, t]$, so $\PP\{R_t > M\} \leq e^{-\lambda M}$.  Choose $M > 0$ with $e^{-\lambda M}
    < \varepsilon/3$.  Before the first jump ($N_t = 0$), the process
    follows its initial trajectory: $X_t = \Phi(x_0, t)$.  The probability of no
    jump by time $t$ is $\PP\{N_t = 0\} = e^{-\lambda t}$, which is small for
    large $t$.  Choose $T > 0$ with $e^{-\lambda T} < \varepsilon/3$.

    Since $\Phi$ is continuous,
    \begin{equation*}
        D \coloneq \Phi(C \times [0,M]) \cup \Phi(\{x_0\} \times [0,T])
    \end{equation*}
    is compact.  (The first component captures the flow starting from any
    post-jump state in $C$ and running for at most $M$ time units.  The second
    captures the initial trajectory from $x_0$ up to time $T$, covering the
    possibility that no jump has yet occurred.)
    Consider first the case $t \leq T$: if $N_t = 0$, then $X_t = \Phi(x_0, t) \in D$.
    For any such $t$: on the event $\{N_t \geq 1, \, Z_{N_t} \in C, \, R_t \leq M\}$,
    we have $X_t = \Phi(Z_{N_t}, R_t) \in D$.
    The three complementary events each have small probability:
    \begin{equation*}
        \PP_{x_0}\{X_t \notin D\}
        \leq \PP\{N_t = 0\} + \PP\{Z_{N_t} \notin C, \, N_t \geq 1\} + \PP\{R_t > M\}
        \leq e^{-\lambda t} + \frac{\varepsilon}{3} + e^{-\lambda M}.
    \end{equation*}
    For $t > T$ each term is at most $\varepsilon/3$, giving a total bound of
    $\varepsilon$.  For $t \leq T$ the first term covers the event $\{N_t = 0\}$
    (on which $X_t \in D$ by construction), so $\PP_{x_0}\{X_t \notin D\} \leq
    \varepsilon/3 + \varepsilon/3 < \varepsilon$.
\end{proof}

\begin{lemma}\label{l:pdmpwom}
    If \cref{ass:flow,a:pdmpm} hold, then $(P_t)$ is weakly order mixing.
\end{lemma}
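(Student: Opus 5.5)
We will construct the coupling described after \cref{t:pdmpgs}: on one probability space take a single i.i.d.\ sequence $(E_n)$ of $\mathrm{Exp}(\lambda)$ inter-arrival times, together with two independent i.i.d.\ shock sequences $(\xi_n)$ and $(\xi_n')$, each with law $\mu$, all independent of one another. Given $(x,x')$, we build the embedded chains $Z_0=x$, $Z_{n+1}=F(\Phi(Z_n,E_{n+1}),\xi_{n+1})$ and $Z_0'=x'$, $Z_{n+1}'=F(\Phi(Z_n',E_{n+1}),\xi_{n+1}')$, and define $X_t=\Phi(Z_{N_t},t-T_{N_t})$ and $X_t'=\Phi(Z'_{N_t},t-T_{N_t})$ with common jump times $T_n$ and counting process $N_t$. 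Each marginal is built exactly as in the PDMP construction of \cref{ss:pdmptheory}, so each is $(P_t)$-Markov. The pair $((X_t),(X_t'))$ is itself a PDMP on $S\times S$, with semi-flow $(\Phi(x,t),\Phi(x',t))$, which is continuous; constant intensity $\lambda$; shock space $U\times U$ with law $\mu\otimes\mu$; and jump map $(F(x,u),F(x',u'))$. Hence it has càdlàg paths and is strong Markov by \citep[Theorem~2.4]{rudnicki2017piecewise}. Because $\leq$ is closed, the set $\{(y,y'): y\leq y'\}$ is closed in $S\times S$, so $\tau=\inf\{t\geq 0: X_t\leq X_t'\}$ is a hitting time of a closed set by a càdlàg process and is therefore measurable. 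This is the same justification used in the proof of \cref{p:purejumpstable}.

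The substantive step is \eqref{eq:omb}. Let $A_n$ denote the event $\{f_2(\xi_n)\leq f_1(\xi_n')\}$. By \cref{a:pdmpm}, on $A_n$ we have
\begin{equation*}
    Z_n=F(\Phi(Z_{n-1},E_n),\xi_n)\leq f_2(\xi_n)\leq f_1(\xi_n')\leq F(\Phi(Z'_{n-1},E_n),\xi_n')=Z_n',
\end{equation*}
so $X_{T_n}\leq X'_{T_n}$ and consequently $\tau\leq T_n$. The pairs $(\xi_n,\xi_n')$ are i.i.d.\ across $n$, and $\delta\coloneq\PP(A_n)=\PP\{f_2(\xi)\leq f_1(\xi')\}>0$ by \cref{a:pdmpm}. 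The events $A_n$ are therefore independent with a common positive probability. It follows that $\PP_{x,x'}\{\tau>T_n\}\leq\PP(A_1^c\cap\dots\cap A_n^c)=(1-\delta)^n\to 0$. Since $T_n<\infty$ almost surely for every $n$, we get $\PP_{x,x'}\{\tau=\infty\}\leq\PP_{x,x'}\{\tau>T_n\}$ for all $n$, which gives $\PP_{x,x'}\{\tau<\infty\}=1$ for all $x,x'\in S$. Note that \cref{ass:flow} is not needed for this step; it only enters \cref{t:pdmpgs} through \cref{l:pdmpinc}.

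The main point needing care is the formal part of the argument: checking that the jointly constructed pair is a genuine strong Markov process with respect to its natural filtration, and that $\tau$ is a stopping time, rather than relying on informal reasoning. I would handle this by identifying the pair explicitly as a PDMP on the Polish space $S\times S$ and invoking the cited theorem. For the stopping-time property, I would use the standard fact that the hitting time of a closed set by a right-continuous adapted process is a stopping time for the usual augmentation of the filtration. This suffices for the way $\tau$ is used in \cref{p:orwomnew}.
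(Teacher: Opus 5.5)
Your proof is correct and follows the paper's argument: the same coupling with shared jump times and independent shock sequences, the identification of the pair as a PDMP on $S\times S$ to get the strong Markov property, and the geometric bound $\PP_{x,x'}\{\tau>T_n\}\le(1-\delta)^n$. The only differences are minor refinements: you define $A_n$ directly as $\{f_2(\xi_n)\le f_1(\xi_n')\}$, which makes the events genuinely independent (the paper instead bounds the conditional probability of $\{Z_n\le Z_n'\}^c$ given $\mathcal{F}_{n-1}$), and your final step correctly uses only that $T_n<\infty$ almost surely, whereas the paper appeals to $T_k\to\infty$.
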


\begin{proof}
    Construct two PDMPs $(X_t)$ and $(X_t')$ on a common probability space
    using the \emph{same} Poisson jump times $(T_n)$ but independent
    shock sequences $(\xi_n)$ and $(\xi_n')$, each {\sc iid} with common
    distribution $\mu$.  Each marginal process is $(P_t)$-Markov, and the joint
    process $((X_t), (X_t'))$ is itself a PDMP on $S \times S$ (with the same
    jump rate $\lambda$, joint flow $(\Phi(x,t), \Phi(x',t))$, and independent
    shocks), hence strong Markov.

    Fix $x, x' \in S$ and write $A_n = \{Z_n \leq Z_n'\}$ for the event that
    the two embedded chains become ordered at the $n$-th shared jump.
    On $A_n$, the semi-flow identity gives $X_{T_n} = \Phi(Z_n, 0) = Z_n \leq
    Z_n' = \Phi(Z_n', 0) = X_{T_n}'$.  Hence $\tau
    \leq T_n$, where $\tau \coloneq \inf\{t \geq 0 : X_t \leq X_t'\}$, and it
    suffices to show that $A_n$ occurs for some $n$ with probability one.

    Let $f_1$ and $f_2$ be as in \cref{a:pdmpm}
    and set $\delta \coloneq \PP\{f_2(\xi) \leq
    f_1(\xi')\} > 0$.  On the event $\{f_2(\xi_n) \leq
    f_1(\xi_n')\}$, we have $Z_n = F(\Phi(Z_{n-1}, E_n), \xi_n)
    \leq f_2(\xi_n) \leq f_1(\xi_n') \leq
    F(\Phi(Z_{n-1}', E_n), \xi_n') = Z_n'$, so $A_n$ occurs regardless of
    the pre-jump states.  Since $(\xi_n, \xi_n')$ is independent of all
    other randomness, $\PP\{A_n^c \mid \mathcal{F}_{n-1}\} \leq 1 - \delta$
    for every $n$.  Therefore
    \begin{equation*}
        \PP_{x,x'}\{\tau > T_k\}
        \leq \PP\{A_1^c \cap \cdots \cap A_k^c\}
        \leq (1 - \delta)^k.
    \end{equation*}
    Since $T_k \to \infty$ a.s., $\PP_{x,x'}\{\tau < \infty\} = 1$.
\end{proof}

\begin{proof}[Proof of \cref{t:pdmpgs}]
    By \cref{l:pdmpinc}, $(P_t)$ is increasing.
    By \cref{p:pdmpbinp}, $(P_t)$ is bounded in probability, which implies the
    existence of a tight trajectory.
    By \cref{l:pdmpwom}, $(P_t)$ is weakly order mixing.
    Global stability now follows from \cref{t:orgsnew}.
\end{proof}

\bibliographystyle{plainnat}

\end{document}